\newtheorem{theorem}{Theorem}[section]
\newtheorem{lemma}[theorem]{Lemma}
\newtheorem{corollary}[theorem]{Corollary}
\newtheorem{proposition}[theorem]{Proposition}
\theoremstyle{definition}
\newtheorem{definition}[theorem]{Definition}
\newtheorem{example}[theorem]{Example}
\newtheorem{remark}[theorem]{Remark}
\newtheorem{question}[theorem]{Question}
\newcommand{\la}{\langle}
\newcommand{\ra}{\rangle}
\newcommand{\B}{\mathbb{B}}
\newcommand{\C}{\mathbb{C}}
\newcommand{\D}{\mathbb{D}}
\renewcommand{\H}{\mathbb{H}}
\newcommand{\N}{\mathbb{N}}
\newcommand{\R}{\mathbb{R}}
\newcommand{\Z}{\mathbb{Z}}
\def\de{\partial}
\renewcommand{\Re}{{\sf Re}\,}
\renewcommand{\Im}{{\sf Im}\,}
\numberwithin{equation}{section}
\title{The stable subset of a univalent self-map}
\author[L. Arosio]{Leandro Arosio$^\ast$}
\address{Dipartimento Di Matematica\\
Universit\`{a} di Roma \textquotedblleft Tor Vergata\textquotedblright\ \\
Via Della Ricerca Scientifica 1, 00133 \\
Roma, Italy} \email{arosio@mat.uniroma2.it}
\thanks{$^{*}$ Supported by the ERC grant ``HEVO - Holomorphic Evolution Equations'' n. 277691.}
\date{\today }
\subjclass[2010]{Primary 32H50; Secondary 37F99}
\keywords{Backward orbits; canonical models; holomorphic iteration}
\begin{document}

\begin{abstract}
We give a complete description  of the stable subset  (the union of all backward orbit with bounded step) and of the pre-models of a univalent self-map $f\colon X\to X$, where $X$ is a Kobayashi hyperbolic cocompact complex manifold, such as the ball or the  polydisc in $\C^q$. 
The result is obtained studying the complex structure of a decreasing intersection of complex manifolds, all biholomorphic to $X$.
\end{abstract}
\maketitle
\tableofcontents

\section{Introduction}

Since the works of  Schr\"oder and K\"onigs \cite{S1,S2,K}, the idea of using  models in one complex variable to understand the behavior of the forward orbits of a holomorphic self-map has been proven to be very fruitful.  Recently, generalizing the work of Cowen in the unit disc, the concept of  model was introduced and studied in  general complex manifolds by Bracci and the author  \cite{AB}. We recall the main definitions and results.
% (with particular emphasis on the case $X$ Kobayashi hyperbolic and cocompact). 
Let $X$ be a complex manifold and let $f\colon X\to X$ be a holomorphic self-map. A {\sl  semi-model} for $f$ is given by a triple
$(\Omega, h,\psi)$, where $\Omega$ is a complex manifold, $h\colon X\to \Omega$ is a holomorphic mapping (called the {\sl intertwining mapping}), and $\psi\colon \Omega\to \Omega$ is an automorphism such that the following diagram commutes:
$$\xymatrix{X\ar[r]^{f}\ar[d]_{h}& X\ar[d]^{h}\\
\Omega\ar[r]^{\psi}& \Omega,}$$
and such that  $\bigcup_{n\geq 0} \psi^{-n}h(X)=\Omega$. 
A {\sl  model} for $f$ is a  semi-model such that the   intertwining  mapping $h\colon X\to \Omega$  is univalent on an $f$-absorbing domain $A\subset X$. 

Let $k_X$ denote the Kobayashi distance of $X$, and let $\kappa_X$ denote the Kobayashi metric of $X$. 
Notice that if $(z_n)$ is a forward orbit, then for all fixed $m\geq 1$  the sequence $(k_{X}(z_n,z_{n+m}))_{n\geq 0}$ is monotonically decreasing. The limit $$s_m(z_0)\coloneqq \lim_{n\to\infty} k_{X}(z_n,z_{n+m})$$ is called the {\sl forward $m$-step}. The {\sl divergence rate} of a self-map is a generalization introduced in  \cite{AB} of the dilation of a holomorphic self-map of the unit ball at the Denjoy--Wolff point. The following  result is proved in \cite{AB}.

\begin{theorem}[A.--Bracci]\label{pippo}
Let $X$ be Kobayashi hyperbolic and cocompact and let $f\colon X\to X$ be a univalent self-map. Then there exists an essentially unique  model $(\Omega,h,\psi)$. Moreover there  exists a holomorphic retract $Z$ of $X$, a  surjective holomorphic submersion $r\colon \Omega\to Z$, and an automorphism $\tau\colon Z\to Z$ with divergence rate   $$c(\tau)=c(f)=\lim_{m\to \infty}\frac{s_m(x)}{m},\quad x\in X,$$
such that $(Z,r\circ h,\tau)$ is a  semi-model for $f$.
Moreover $(Z,r\circ h,\tau)$ satisfies the following universal property. If $(Q, \ell, \varphi)$ is another  semi-model for $f$ such that $ Q$ is Kobayashi hyperbolic, then there exists a surjective holomorphic mapping $\eta\colon Z\to Q$ such that
 the following diagram commutes:

\SelectTips{xy}{12}
\[ \xymatrix{X \ar[rrr]^\ell\ar[rrd]^{r\circ h}\ar[dd]^f &&& Q \ar[dd]^\varphi\\
&& Z \ar[ru]^\eta \ar[dd]^(.25)\tau\\
X\ar'[rr]^\ell[rrr] \ar[rrd]^{r\circ h} &&& Q\\
&& Z \ar[ru]^\eta.}
\]
\end{theorem}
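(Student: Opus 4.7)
The plan is to first construct the model $(\Omega,h,\psi)$ abstractly as a direct limit, and then to extract the hyperbolic piece $(Z,r\circ h,\tau)$ using the cocompact action of $\Aut(X)$ together with normal-family arguments.

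For the model, I would realize $\Omega$ as the direct (inductive) limit of the sequence $X\xrightarrow{f}X\xrightarrow{f}X\xrightarrow{f}\cdots$. Since $f$ is univalent, each transition map is an open embedding onto its image, so the colimit carries a unique complex structure with respect to which the structure maps $h_n\colon X\to\Omega$ are biholomorphisms onto an increasing family of open sets $\Omega_n$ with $\bigcup_n\Omega_n=\Omega$. The shift sending $\Omega_n$ to $\Omega_{n-1}$ extends uniquely to an automorphism $\psi$ of $\Omega$, and setting $h:=h_0$ makes the required square commute by construction. Essential uniqueness of $(\Omega,h,\psi)$ follows from the universal property of the direct limit.

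The extraction of $Z$ is the core of the argument. Fix $x_0\in X$; using cocompactness, I would choose $\gamma_n\in\Aut(X)$ so that $\gamma_n(f^n(x_0))$ lies in a fixed compact set $K\subset X$. Since $X$ is Kobayashi hyperbolic, the family $g_n:=\gamma_n\circ f^n\colon X\to X$ is normal, so along a subsequence $g_n\to g$ for some holomorphic $g\colon X\to X$. A theorem of Abate on limits of iterative compositions in taut manifolds guarantees that $Z:=g(X)$ is a holomorphic retract of $X$, with retraction coming from iterating $g$ after a suitable renormalization. Tracking how the conjugating automorphisms $\gamma_n$ shift under the dynamics, one checks that $f$ descends to a well-defined \emph{automorphism} $\tau$ of $Z$, and the submersion $r\colon\Omega\to Z$ is induced by pushing the limit through the colimit construction. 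The identity $c(\tau)=c(f)=\lim_m s_m(x)/m$ follows because $\tau$ is an isometry of $k_Z$, so its forward steps are constant in $n$ and equal to the limiting forward step of $f$.

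For the universal property, given a hyperbolic semi-model $(Q,\ell,\varphi)$, the candidate intertwining $\eta\colon Z\to Q$ is obtained as a subsequential limit of maps built from $\varphi^{-n}\circ\ell$ and the $\gamma_n$, with normality guaranteed by hyperbolicity of $Q$; surjectivity follows from the defining condition $\bigcup_n\varphi^{-n}\ell(X)=Q$ of a semi-model. The main obstacle I anticipate is \emph{canonicity} of $Z$: different choices of $\gamma_n$ or of subsequence could \emph{a priori} yield different retracts, so one must prove intrinsic independence. Together with showing that $\tau$ is a genuine automorphism (and not merely an endomorphism), this rigidity rests essentially on cocompactness, which converts the Kobayashi contraction of $f$ along forward orbits into honest isometries in the limit.
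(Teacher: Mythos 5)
First, a point of reference: Theorem \ref{pippo} is not proved in this paper at all --- it is quoted from \cite{AB}, where the proof rests on the Forn\ae ss--Sibony analysis of increasing unions \cite{FS}. Your overall strategy (direct limit to build $(\Omega,h,\psi)$, then renormalization by automorphisms via cocompactness and normal families to extract $Z$) is precisely the strategy of that source, and it is the mirror image of the construction the present paper carries out in detail in Section \ref{intersection} for decreasing intersections. The direct-limit part of your sketch is fine.

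There is, however, a concrete gap in your extraction of $Z$. The limit $g=\lim\gamma_{n_k}\circ f^{n_k}$ does exist, but $Z$ is \emph{not} $g(X)$, and there is no ``theorem of Abate'' that applies here: Abate's limit-retraction theorem concerns the iterates $h^n$ of a single self-map that are \emph{not} compactly divergent, whereas $(f^n)$ is typically compactly divergent (that is exactly why the $\gamma_n$ are needed), and the renormalized maps $\gamma_n\circ f^n$ are not the iterates of anything. In fact one has $g=r\circ h$, so that $Z=r(\Omega)=\bigcup_n\tau^{-n}\bigl(g(X)\bigr)$, and $g(X)=r(h(X))$ is in general a proper subset of $Z$; worse, it need not be a retract of $X$ at all. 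Already for a univalent hyperbolic non-automorphism of $\D$ one gets $Z=\D$ while $g$ is univalent and non-surjective, and a proper simply connected subdomain of $\D$ is never a holomorphic retract of $\D$. The correct mechanism --- used in \cite{FS}, in \cite{AB}, and dually in Lemma \ref{yob} of this paper --- is to form the compositions $\gamma_n\circ f^{\,n-m}\circ\gamma_m^{-1}$ (equivalently $g_n\circ g_m^{-1}$ on the appropriate domains), pass to a double limit in $n$ and then in $m$ to obtain a map $\alpha\colon X\to X$ with $\alpha\circ\alpha=\alpha$, and set $Z\coloneqq\alpha(X)$; the submersion $r$ is then the limit of the maps $\gamma_n\circ\phi_n^{-1}$ defined on the exhausting open sets $\psi^{-n}h(X)$. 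Two smaller points: Kobayashi hyperbolicity of $Q$ does not by itself give tautness, so the normality of the family $\varphi^{-n}\circ\ell\circ\gamma_n^{-1}$ in the universal property requires a separate argument; and the independence of $Z$ from the choice of $\gamma_n$ and of subsequence is most cheaply deduced \emph{from} the universal property afterwards, rather than proved beforehand as you propose.
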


In this paper we study the dual concept: a {\sl pre-model} for $f$ is given by a triple
$(Q, t,\vartheta)$, where $Q$ is a complex manifold, $t\colon Q\to X$ is a holomorphic mapping (called the {\sl   intertwining mapping}) and $\vartheta\colon Q\to Q$ is an automorphism such that the following diagram commutes:
$$\xymatrix{Q\ar[r]^{\vartheta}\ar[d]_{t}& Q\ar[d]^{t}\\
X\ar[r]^{f}& X.}$$
A  pre-model for $f$ is {\sl injective} if the intertwining mapping $t\colon Q\to X$  is injective. 

Pre-models are naturally related to 
the {\sl backward orbits} of $f$, that is, the sequences  $(z_n)$ in $X$ such that $f(z_{n+1})=z_n$ for all $n\geq 0$.
Notice that if $(z_n)$ is a backward orbit, then  for all fixed $m\geq 1$ the sequence $(k_{X}(z_n,z_{n+m}))_{n\geq 0}$ is monotonically increasing. The limit $$\sigma_m(z_n)\coloneqq \lim_{n\to\infty} k_{X}(z_n,z_{n+m})$$   is called the {\sl backward $m$-step}. If
 $f$ is univalent, then $(z_n)$ is the unique backward orbit starting at $z_0$, and thus we simply write  $\sigma_m(z_0)$ instead of $\sigma_m(z_n)$.
A backward orbit $(z_n)$ has {\sl bounded step} if $\sigma_1(z_n)<\infty$.

In 1998 Poggi-Corradini  \cite{PC0}  studied injective pre-models for univalent self-maps of the  unit disc with a fixed point at the origin.
Backward orbits  were introduced in 2000  by Poggi-Corradini \cite{PC1} as a tool for constructing pre-models for holomorphic self-maps $f$ of the unit disc. The dynamics of backward orbits with bounded step was first studied in the unit disc in 2003  by Bracci and Poggi-Corradini \cite{B,PC2}. In 2005 Contreras and D\'iaz-Madrigal \cite{CD} obtained results similar to \cite{PC0} in the context of semigroups of the unit disc with no fixed point $z\in \D$.
Recently  Ostapyuk \cite{O} studied the case of the unit ball, and Abate--Raissy and Abate--Bracci studied the case of strictly convex domains \cite{AR, AbB}.

Given a boundary repelling fixed point $\zeta$ with dilation $1<\lambda<\infty$, Poggi-Corradini \cite{PC1} proved   that there exists a backward orbit $(z_n)$ converging to $\zeta$ satisfying $\sigma_1(z_n)=\log \lambda$. Using such a sequence he proved the following result (for the notions of  boundary repelling fixed points and dilations see Definition \ref{magamago'}).

 \begin{theorem}[Poggi-Corradini ]
Let $f\colon \D\to \D$ be  a holomorphic self-map, and let $\zeta\in \partial \D$ be a boundary repelling fixed point with dilation $1<\lambda<\infty$.
Then there exists a pre-model $(\H, t,\vartheta)$ for $f$ such that $\vartheta$ is the hyperbolic automorphism of $\H$ given by
$$\vartheta(z)=\frac{1}{\lambda} z,$$ 
and the mapping $t$ has non-tangential limit $\zeta$ at $\infty$.
\end{theorem}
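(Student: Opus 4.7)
The plan is to construct $t\colon\H\to\D$ as a Montel-limit of renormalized iterates of $f$, using the Poggi-Corradini backward orbit as input.

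Fix a backward orbit $(w_n)_{n\ge 0}$ in $\D$ converging non-tangentially to $\zeta$ with $\sigma_1(w_n)=\log\lambda$, so $k_{\D}(w_n,w_{n+1})\nearrow\log\lambda=k_{\H}(i,\lambda i)$. For each $n$ let $M_n\colon\H\to\D$ be the unique hyperbolic isometry with $M_n(i)=w_n$ and $M_n(\infty)=\zeta$, and set
\[
 t_n\colon\H\to\D,\qquad t_n(z):=f^n\bigl(M_n(z)\bigr).
\]
Since $t_n(i)=f^n(w_n)=w_0\in\D$ for every $n$, the uniformly bounded family $(t_n)$ admits, by Montel, a subsequence $t_{n_k}$ converging locally uniformly to a holomorphic $t\colon\H\to\D$ with $t(i)=w_0$.

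For the intertwining $f\circ t=t\circ\vartheta$, consider $\alpha_n:=M_n^{-1}\circ M_{n+1}\in\Aut(\H)$. Both $M_n$ and $M_{n+1}$ send $\infty\mapsto\zeta$, so $\alpha_n$ fixes $\infty$ and has the form $\alpha_n(z)=a_nz+b_n$ with $a_ni+b_n=M_n^{-1}(w_{n+1})=:p_n\in\H$. Then $k_{\H}(i,p_n)=k_{\D}(w_n,w_{n+1})\to\log\lambda$ from the exact-step hypothesis, and the non-tangential approach of $(w_n)$ to $\zeta$ forces $p_n\to\lambda i$; hence $a_n\to\lambda$, $b_n\to 0$, and $\alpha_n\to\vartheta^{-1}$ locally uniformly on $\H$. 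Using that $M_n$ is an isometry and $f^{n+1}$ is $k_{\D}$-nonexpansive,
\[
k_{\D}\bigl(f(t_n(z)),\,t_{n+1}(\vartheta(z))\bigr)\le k_{\D}\bigl(M_n(z),\,M_{n+1}(\vartheta(z))\bigr)=k_{\H}\bigl(z,\alpha_n(\vartheta(z))\bigr)\to 0,
\]
and passing to the limit yields $f\circ t=t\circ\vartheta$. Applying the same estimate at $z=\lambda^k i$ identifies $t(\lambda^k i)=w_k$ for all $k\ge 0$.

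Since $t(\lambda^k i)=w_k\to\zeta$, $t$ has radial limit $\zeta$ at $\infty\in\partial\H$, and Lindelöf's theorem applied to the bounded holomorphic map $t$ upgrades this to the claimed non-tangential limit. The principal obstacle is the directional alignment $p_n\to\lambda i$: the distance $\log\lambda$ comes for free from $\sigma_1(w_n)=\log\lambda$, but the radial direction encodes the non-tangential convergence of the backward orbit to $\zeta$. This is the geometric heart of the argument and where the Poggi-Corradini construction of $(w_n)$ is used essentially; without it the limit would only semi-intertwine $f$ with an automorphism of $\H$ having a different normalization.
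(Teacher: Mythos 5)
This theorem is not proved in the paper---it is quoted from Poggi-Corradini [PC1]---so your proposal has to stand on its own. The strategy (renormalized backward iterates $f^n\circ M_n$ plus a normal-families limit) is the right one and is essentially the one used in the literature, but there are two genuine gaps. The main one is the passage to the limit in the intertwining relation. Your estimate shows $k_{\D}\bigl(f(t_n(z)),t_{n+1}(\vartheta(z))\bigr)\to 0$, so if $t_{n_k}\to t$ then $t_{n_k+1}\to (f\circ t)\circ\vartheta^{-1}$; to conclude $f\circ t=t\circ\vartheta$ you need $t_{n_k+1}\to t$, which is exactly what is to be proved and is not supplied by a Montel extraction. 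One must either prove convergence of the \emph{full} sequence $(t_n)$---this is the technical heart of Poggi-Corradini's and Ostapyuk's arguments, and it does not follow from $\alpha_n\to\vartheta^{-1}$ alone, since the recursion $t_{n+1}=f\circ t_n\circ\alpha_n$ has errors that need not be summable---or argue differently. The same defect infects the identification $t(\lambda^k i)=w_k$: from $f(t(\lambda i))=w_0$ you cannot conclude $t(\lambda i)=w_1$, because $f$ is not injective. (That identification can instead be obtained directly from $k_{\D}(t_n(\lambda^k i),w_k)\le k_{\H}(\lambda^k i,M_n^{-1}(w_{n+k}))\to 0$, and it holds for every subsequential limit; but agreement on the discrete set $\{\lambda^k i\}$ does not force two subsequential limits to coincide, so this does not repair the first gap.)

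The second gap is the claim that non-tangential convergence of $(w_n)$ forces $p_n\to\lambda i$. As stated this is false: in the half-plane model with $\zeta=\infty$, the sequence $w_n=(b+i)\mu^n$ with $b\ne 0$ and a suitable $1<\mu<\lambda$ converges to $\infty$ non-tangentially with constant step equal to $\log\lambda$, yet $p_n\equiv b(\mu-1)+i\mu\ne\lambda i$. What rules this configuration out for the Poggi-Corradini orbit is the additivity $\sigma_m=m\log\lambda$ for all $m$ (combine $\sigma_m\le m\sigma_1=m\log\lambda$ with the Julia-type lower bound $\sigma_m\ge m\log\lambda$ coming from the dilation $\lambda^m$ of $f^m$ at $\zeta$); additivity forces the orbit to be asymptotically geodesic, and only then does non-tangential convergence identify the asymptotic geodesic as the one ending at $\zeta$ and yield $p_n\to\lambda i$. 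You correctly single this out as the geometric heart, but the justification you give for it is insufficient.
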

Poggi-Corradini also proves that the pre-model $(\H, t,\vartheta)$ is essentially unique.
The same strategy was used by Ostapyuk to generalize this result to the unit ball $\B^q\subset \C^q$. She proved in \cite{O} that if a  boundary repelling fixed point $\zeta$ is isolated, then there exists  a backward orbit $(z_n)$ converging to $\zeta$ and satisfying $\sigma_1(z_n)=\log \lambda$. As a consequence, the following result is proved.
%Recall that the $K$-limit  is the natural generalization in the unit ball of the non-tangential limit in the unit disc, and that a point $\zeta\in \partial B^q$ such that $K\hbox{-}\lim_{z\to\zeta}f(z)=\zeta$ and
%$$\liminf_{z\to\zeta}\frac{1-\|f(z)\|}{1-\|z\|}=1<\lambda<\infty$$ is called a {\sl boundary repelling fixed point}, and $\lambda$ is called  its {\sl dilation}. 
\begin{theorem}[Ostapyuk]\label{mila}
Let $f\colon \B^q\to \B^q$ a holomorphic self-map, and let $\zeta$ be a boundary repelling fixed point with dilation $1<\lambda<\infty$, which is isolated from other boundary repelling fixed points with dilation less or equal than $\lambda$. Then there exists a pre-model $(\H,t,\vartheta)$ for $f$ such that $\vartheta$ is the hyperbolic automorphism of $\H$ given by $$\vartheta(z)=\frac{1}{\lambda} z,$$
and the mapping $t$ has non-tangential limit $\zeta$ at $\infty$.
\end{theorem}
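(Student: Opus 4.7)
The plan is to follow Poggi-Corradini's two-step strategy, adapted to the ball. First, produce a backward orbit $(z_n)$ converging to $\zeta$ with sharp backward step $\sigma_1(z_n)=\log\lambda$; second, assemble the pre-model from this orbit by a normal-families construction, extracting an intertwining map $t\colon\H\to\B^q$.

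For the backward orbit, the classical device is to approximate $f$ by the contractions $f_r:=rf$ with $r\in(0,1)$. Each $f_r$ has a unique interior fixed point by the Earle--Hamilton theorem, and iterating inverse branches of $f_r$ along a curve terminating at $\zeta$ produces backward orbits $(z^{(r)}_n)$ of $f_r$. A diagonal argument, extracting a normal-families limit as $r\to 1^-$, yields a candidate backward orbit $(z_n)$ of $f$. Julia's lemma at $\zeta$ supplies the lower bound $\sigma_1(z_n)\geq\log\lambda$, while the approximation delivers the matching upper bound. The isolation hypothesis enters precisely here: without it, the limiting orbit could drift to another boundary repelling fixed point of dilation $\leq\lambda$, which would either destroy the convergence to $\zeta$ or force a strictly smaller step.

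To build the pre-model, fix the base point $p=i\in\H$ so that $\vartheta^{-n}(p)=\lambda^n p$. For each $n$ choose a holomorphic map $\alpha_n\colon\H\to\B^q$ with $\alpha_n(\lambda^n p)=z_n$ tangent to the direction of the backward orbit, for instance a Kobayashi extremal disc, and set
$$t_n(w):=f^n\bigl(\alpha_n(\lambda^n w)\bigr).$$
By construction $t_n(p)=f^n(z_n)=z_0$, and the asymptotic equality $k_{\B^q}(z_n,z_{n+1})\to\log\lambda=k_\H(\lambda^k p,\lambda^{k+1} p)$ ensures $t_n(\lambda^k p)\to z_k$ for each fixed $k\geq 0$. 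The family $\{t_n\}\subset\Hol(\H,\B^q)$ is normal by Montel (the ball is taut), so a subsequence converges to a holomorphic limit $t\colon\H\to\B^q$ with $t(\lambda^k p)=z_k$ for every $k\geq 0$. Checking approximate commutativity $f\circ t_n\approx t_n\circ\vartheta$ along the axis and passing to the limit yields the intertwining $f\circ t=t\circ\vartheta$; non-degeneracy of $t$ follows from the fact that its image contains the non-collapsing backward orbit.

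The main obstacle is the non-tangential limit at $\infty$. Pointwise convergence $t(\lambda^k p)\to\zeta$ along the imaginary axis is immediate, but upgrading it to a non-tangential limit requires controlling $t$ throughout Kor\'anyi approach regions. The key point is that the intertwining relation, together with the matching of boundary dilations of $\vartheta$ at $\infty$ and of $f$ at $\zeta$, forces $t$ to send horocycles at $\infty$ in $\H$ into horocycles at $\zeta$ in $\B^q$, via a Julia-type inequality. Horocycle containment combined with the radial limit $\zeta$ then yields non-tangential convergence in the spirit of a Lindel\"of-type theorem. This step, more than the abstract normal-family limit, is where the geometry of the ball (as opposed to the disc) genuinely plays a role.
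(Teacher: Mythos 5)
This theorem is stated in the paper as a quoted result of Ostapyuk \cite{O}; the paper gives no proof of its own, but it does record the strategy: first produce a backward orbit $(z_n)\to\zeta$ with $\sigma_1(z_n)=\log\lambda$ (this is \cite[Lemma 3.1]{O}, and it is the only place the isolation hypothesis is used), then renormalize along that orbit to obtain the intertwining map. Your second step (the maps $t_n=f^n\circ\alpha_n\circ(\lambda^n\cdot)$ built from extremal discs, the telescoping estimate coming from $k_{\B^q}(z_n,z_{n+1})\to\log\lambda$, and the Julia--Lindel\"of argument for the non-tangential limit at $\infty$) follows that strategy faithfully and is fine in outline.

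The genuine gap is in your first step. ``Iterating inverse branches of $f_r$ along a curve terminating at $\zeta$'' is not available: $f$ is only assumed to be a holomorphic self-map, so neither $f$ nor $f_r=rf$ need be injective or surjective; a curve ending at $\zeta$ will in general leave the image of $f_r$ (indeed $\overline{f_r(\B^q)}\subset r\,\overline{\B^q}$ stays away from $\partial\B^q$ altogether); and nothing in your construction ties backward orbits of the elliptic maps $f_r$ --- whose only obvious backward-invariant object is the Earle--Hamilton fixed point --- to the point $\zeta$ or to the dilation $\lambda$. The construction that actually works is different and does not invert anything: choose points $b_k\to\zeta$ radially and read the finite forward orbits $b_k, f(b_k),\dots,f^k(b_k)$ backwards, obtaining finite backward orbits starting at $f^k(b_k)$ whose steps are all bounded by $k_{\B^q}(b_k,f(b_k))\to\log\lambda$ (because $f$ contracts the Kobayashi distance and the radial limit of $k_{\B^q}(z,f(z))$ at a boundary repelling fixed point equals $\log\lambda$). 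One then shows, via Julia's lemma, that the starting points $f^k(b_k)$ remain in a compact subset of $\B^q$, extracts an infinite backward orbit by a diagonal argument, and uses the isolation hypothesis --- together with the fact that a backward orbit with step at most $\log\lambda$ can only accumulate at a boundary repelling fixed point of dilation at most $\lambda$ --- to conclude that the limit orbit converges to $\zeta$; Julia's lemma then gives the matching lower bound $\sigma_1\geq\log\lambda$. Without replacing your $rf$-approximation by some such argument, the proof does not get off the ground, since everything downstream consumes the backward orbit produced here.
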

 Theorem \ref{mila} gives dynamical information on $f$ only on the one-dimensional image $t(\H)$. This remark motivated the following open question  \cite[Question 6.2.1]{O}. Recall that the {\sl stable subset} $\mathcal{S}(\zeta)$ at the boundary regular fixed point $\zeta$ is  the union of all backward orbits with bounded backward step that  tend to $\zeta$. 
\begin{question}\label{robinhoodintro}
Let $f\colon \B^q\to \B^q$ a holomorphic self-map, and let $\zeta$ be a boundary repelling fixed point with dilation $1<\lambda<\infty$.
In  one dimension, $\mathcal{S}(\zeta)=t(\H)$.
It is important to understand the properties of the stable subset at  boundary repelling fixed point in several variables, because it may help
to find the ``best possible'' intertwining map, i.e. the intertwining map whose image has the largest dimension.
\end{question}

We define the {\sl stable subset} $\mathcal{S}(f)$ of $f$ as  the union of all backward orbits with bounded  step in $X$.
Our main result  describes the  structure of the stable subset $\mathcal{S}(f)$ and the pre-models   for a univalent self-map $f$ of a Kobayashi hyperbolic cocompact  manifold. In particular we show that every backward orbit with bounded step gives rise to an essentially unique injective  pre-model, with the ``best possible'' intertwining map.
Our geometric approach  is completely new in this context and yields a result in  duality with Theorem \ref{pippo}.

\begin{theorem}\label{mainintro}
Let $X$ be Kobayashi hyperbolic and cocompact and let $f\colon X\to X$ be a univalent self-map.  Then the stable subset $\mathcal{S}(f)$, if non-empty, is
the disjoint union
 of completely invariant complex  submanifolds $$\Lambda=\bigsqcup_{j\in J} \Sigma_j,$$ such that 
 for all $j\in J$ there exists a holomorphic retract  $Z_j$ and an injective holomorphic immersion $g_j\colon Z_j\to X$ satisfying
 $g_j(Z_j)=\Sigma_j$.
For all $j\in J$, there exists an automorphism $\tau_j\colon Z_j\to Z_j$ with divergence rate $$c(\tau_j)=\lim_{m\to \infty}\frac{\sigma_m(x)}{m},\quad \forall\,x\in \Sigma_j,$$
such that $(Z_j,g_j,\tau_j)$ is an injective pre-model for $f$.
Moreover $(Z_j,g_j,\tau_j)$ satisfies the following universal property. If $(Q,t,\vartheta)$ is another pre-model for $f$ such that
$ t(Q)\cap\Sigma_j\neq \varnothing$, then there exists an injective holomorphic mapping $\eta\colon Z_j\to Q$ such that
 the following diagram commutes:
\SelectTips{xy}{12}
\[ \xymatrix{Z_j \ar[rrr]^{g_j}\ar[rd]^\eta\ar[dd]^{\tau_j} &&& X \ar[dd]^f\\
& Q \ar[rru]^t \ar[dd]^(.25)\vartheta\\
Z_j\ar'[r][rrr]^(.25){g_j} \ar[rd]^\eta &&& X\\
& Q \ar[rru]^t.}
\]
\end{theorem}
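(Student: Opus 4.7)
The plan is to construct, for each backward orbit with bounded step, an injective pre-model via a renormalization procedure dual to the one in the proof of Theorem~\ref{pippo}. I would set $X_n := f^n(X)$; by univalence of $f$, each $X_n$ is a complex submanifold biholomorphic to $X$ and $X_0 \supseteq X_1 \supseteq \cdots$, so the decreasing intersection $\mathcal{X} := \bigcap_{n\geq 0} X_n$ is precisely the set of points admitting a (necessarily unique) backward orbit, and $f$ restricts to a bijection of $\mathcal{X}$. On $\mathcal{S}(f) \subseteq \mathcal{X}$ I would introduce the equivalence relation $x \sim y$ iff the monotonically increasing sequence $(k_X(f^{-n}(x), f^{-n}(y)))_n$ is bounded, and let the $\Sigma_j$ be its equivalence classes. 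The classes are completely invariant since $x \sim f(x)$ for every $x \in \mathcal{S}(f)$: the relevant sequence is a shift of $(k_X(z_n, z_{n+1}))_n$, whose limit is $\sigma_1(x) < \infty$.

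To realize $\Sigma_j$ as the image of a retract, fix a backward orbit $(z_n) \subseteq \Sigma_j$ and, using cocompactness of $\Aut(X)$, choose $\gamma_n \in \Aut(X)$ with $\gamma_n(z_n) = p$ for a fixed basepoint $p \in X$. Define
\[
g_n := f^n \circ \gamma_n^{-1} \colon X \to X,
\]
each univalent with image $X_n$ and satisfying $g_n(p) = z_0$. By Kobayashi hyperbolicity the family $\{g_n\}$ is normal, so after extracting a subsequence $g_n \to g \colon X \to X$ locally uniformly, with $g(X) \subseteq \mathcal{X}$; the bounded-step hypothesis will be used to confine $g(X)$ to the single class $\Sigma_j$. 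The companion automorphisms $\tau_n := \gamma_{n+1} \circ \gamma_n^{-1} \in \Aut(X)$ satisfy $g_{n+1} \circ \tau_n = f \circ g_n$ identically, and $\tau_n(p) = \gamma_{n+1}(z_n)$ stays at Kobayashi distance $k_X(z_n,z_{n+1}) \leq \sigma_1(z_0)$ from $p$. Cocompactness (properness of the $\Aut(X)$-action on $X$) then produces a subsequential limit $\tau \in \Aut(X)$ with $g \circ \tau = f \circ g$; the identity $k_X(\tau^{-m}(p), p) = \sigma_m(z_0)$ gives $c(\tau) = \lim_m \sigma_m(z_0)/m$.

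The delicate step is promoting $g(X) = \Sigma_j$ to a complex submanifold of the form $g_j(Z_j)$, with $Z_j$ a holomorphic retract of $X$ and $g_j$ an injective immersion. The strategy is to show that $g$ has constant rank: the functional equation $g \circ \tau = f \circ g$ forces the fibres of $g$ to be preserved by the automorphism $\tau$, hence to form a $\tau$-invariant holomorphic foliation of $X$; the leaf space is realized as a retract $Z_j$ of $X$, yielding a factorization $g = g_j \circ \rho_j$ with $g_j \colon Z_j \to X$ an injective immersion, and $\tau$ descends to $\tau_j \in \Aut(Z_j)$. For the universal property, given $(Q,t,\vartheta)$ and $q_0 \in Q$ with $t(q_0) \in \Sigma_j$, the backward orbit $(\vartheta^{-n}(q_0))$ projects via $t$ to the backward orbit of $t(q_0)$; running the same limit construction factors through $t$ and produces a unique injective $\eta \colon Z_j \to Q$ making the diagram commute. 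The principal obstacle is exactly this retract-structure step --- proving constant rank of $g$ and upgrading its image to a holomorphic retract of $X$ --- which dualises the hardest portion of the proof of Theorem~\ref{pippo} and relies essentially on cocompactness to convert the asymptotic analysis of the decreasing intersection $\bigcap_n X_n$ into a compactness statement for automorphisms.
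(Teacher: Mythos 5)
Your setup matches the paper's: the same equivalence relation on $\Lambda=\bigcap_n f^n(X)$ (boundedness of $(k_X(f^{-n}(x),f^{-n}(y)))$), the same identification of the classes $\Sigma_j$ with the pieces of $\mathcal S(f)$ via ``bounded step $\Leftrightarrow x\sim f(x)$'', and the same renormalization $g_n=f^n\circ\gamma_n^{-1}$ with a tautness limit $g$ whose image is the class of the base point. Two inaccuracies already appear here: cocompactness only yields $\gamma_n(z_n)\in K$ for a fixed compact $K$, not $\gamma_n(z_n)=p$ (that would require homogeneity of $X$); and extracting $\tau=\lim\tau_n$ with $g\circ\tau=f\circ g$ needs $g_{n}$ and $g_{n+1}$ to converge to the \emph{same} limit along the chosen subsequence, a diagonal issue the paper avoids altogether by obtaining $\tau$ a posteriori from the universal property of $g$ (Proposition \ref{mrpink} applied to $f\circ g$, whose image lies in $\Sigma_j$), which makes $\tau$ directly a bijective self-map of $Z_j$, hence an automorphism.

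The genuine gap is precisely the step you flag as delicate. Constant rank of $g$ does not follow from the functional equation, and even granting a $\tau$-invariant foliation by fibres of $g$, the leaf space of a holomorphic foliation is in general not a manifold, let alone realizable as a holomorphic retract of $X$; your sketch produces no retraction. The paper's key device, absent from your proposal, is the \emph{reverse} renormalization $\alpha_n:=g_n^{-1}\circ g\colon X\to X$: it subconverges to a map $\alpha$ which is shown to be a holomorphic retraction (Lemma \ref{yob}); one sets $Z:=\alpha(X)$, gets $g(X)=g(Z)$, injectivity of $g|_Z$ because $g(x)=g(y)$ forces $\alpha_n(x)=\alpha_n(y)$ for all $n$, and the immersion property from lower semicontinuity of the rank combined with ${\rm rk}_x(g)\le\min\{{\rm rk}_{\alpha(x)}(g|_Z),{\rm rk}_x(\alpha)\}$. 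Without this (or an equivalent) argument, the parametrization $g_j\colon Z_j\to X$, the divergence-rate identity (which the paper derives from the convergence $k_{X_n}|_{\Sigma_j}\to k_{\Sigma_j}$ of Proposition \ref{mryellow}, not from an unproved identity $k_X(\tau^{-m}(p),p)=\sigma_m$), and the universal property are all unestablished. On the last point, note also that the limit construction $g_n^{-1}\circ t$ yields a map $\eta\colon Q\to Z_j$ with $g_j\circ\eta=t$ (after using Lemma \ref{giallo} to see that $t(Q)\subset\Sigma_j$ once it meets $\Sigma_j$); an injective map in the direction $Z_j\to Q$ with $t\circ\eta=g_j$ would force $\Sigma_j\subset t(Q)$, which is not assumed, so your closing sentence cannot be obtained by ``running the same limit construction''.
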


We apply this result to the case of the unit ball $\B^q$,
giving  the following answer to Question \ref{robinhoodintro}. Notice that if $X=\B^q$, then the holomorphic retract $Z_j$ is biholomorphic to a ball $\B^{k_j}$ with $0\leq k_j\leq q$. Recall that  the Siegel upper half-space $\H^q$ is biholomorphic to $\B^q$ (see Definition \ref{jafar}).
\begin{theorem}\label{timon}
Let $f\colon \B^q\to\B^q$ be a univalent self-map and let $\zeta\in \partial\B^q$ be a boundary repelling fixed point  with dilation $1<\lambda<\infty$.  
 Then the stable subset  $\mathcal{S}(\zeta)$ at $\zeta$, if non-empty, is
the disjoint union
 of completely invariant complex  submanifolds $$\Lambda=\bigsqcup_{j\in J} \Sigma_j.$$ 
Fix $j\in J$,  let $1\leq k_j\leq q$ be the dimension of $\Sigma_j$ and define $\mu_j$ by  $$\mu_j\coloneqq {\lim_{m\to \infty}e^\frac{\sigma_m(x)}{m}}\geq \lambda,$$ where  $x\in\Sigma_j$. 
Then $\mu_j$ does not depend on $x\in \Sigma_j$ and there exist
  an injective holomorphic immersion  $g_j\colon \H^{k_j}\to \B^q$ with $g_j(\H^{k_j})=\Sigma_j$ and $$K\hbox{-}\lim_{z\to \infty}g_j(z)=\zeta,$$
  and   a $(k_j-1)\times(k_j-1)$ diagonal unitary matrix $U_j$
 such that
 $$\left(\H^{k_j},g_j,\tau_j\colon(z,w)\mapsto \left(\frac{1}{\mu_j} z,\frac{1}{\sqrt {\mu_j}}\, U_jw\right)\right)$$ is an injective pre-model for $f$.
If $\Sigma_j$ contains a special and restricted backward orbit, then $\mu_j=\lambda$.
Moreover, the  pre-model $(\H^{k_j},g_j,\tau_j)$ satisfies the same universal property as in Theorem \ref{mainintro}.
\end{theorem}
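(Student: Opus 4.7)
The plan is to derive this theorem from Theorem \ref{mainintro} by specializing the general output to the geometry of the ball and then reading off the boundary behavior from the resulting normal form. Applying Theorem \ref{mainintro} to $f\colon\B^q\to\B^q$ gives the decomposition $\mathcal{S}(f)=\bigsqcup_j\Sigma_j$ together with injective pre-models $(Z_j,g_j,\tau_j)$. A classical result of Rudin asserts that every holomorphic retract of $\B^q$ is biholomorphic to an affine subball, hence $Z_j\simeq\B^{k_j}\simeq\H^{k_j}$. The backward orbits of $f$ inside $\Sigma_j$ have bounded step but leave every compact subset of $\B^q$, so their $g_j$-preimages do the same in $\H^{k_j}$; this rules out both interior fixed points and parabolic behavior for $\tau_j$, leaving it a hyperbolic automorphism of $\H^{k_j}$. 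After a suitable conjugation, the Denjoy--Wolff point of $\tau_j$ becomes $0$ and its repelling point becomes $\infty$, and the stabilizer of both these points in $\Aut(\H^{k_j})$ is precisely $\{(z,w)\mapsto (a^2 z, aUw):a\in(0,1),\,U\in U(k_j-1)\}$. A further unitary change of coordinates in $w$ diagonalizes $U$, and writing $\mu_j=a^{-2}$ yields the desired form $\tau_j(z,w)=(z/\mu_j,U_jw/\sqrt{\mu_j})$. Computing the Kobayashi distance along the totally geodesic $z$-axis for a backward orbit of this $\tau_j$ then gives $\sigma_m=m\log\mu_j$, so $\mu_j=\lim e^{\sigma_m(x)/m}$.

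Next I would exploit the explicit shape of $\tau_j^{-n}$ to show that its backward orbits converge to $\infty$ inside a fixed Koranyi region of $\H^{k_j}$. Because $g_j$ is Kobayashi-distance non-increasing and $\B^q$ is strongly pseudoconvex, any two sequences in $\H^{k_j}$ at bounded Kobayashi distance whose images tend to $\partial\B^q$ must share the same boundary limit; this both forces a common limit $\zeta_j\in\partial\B^q$ for every backward orbit in $\Sigma_j$ (so either $\Sigma_j\subseteq\mathcal{S}(\zeta)$ or $\Sigma_j\cap\mathcal{S}(\zeta)=\varnothing$) and, after bootstrapping over Koranyi regions, promotes pointwise convergence along orbits to the full statement $K\hbox{-}\lim_{z\to\infty}g_j(z)=\zeta$. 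The inequality $\mu_j\geq\lambda$ then follows from the Julia-type estimate for backward orbits: an orbit with bounded step converging to a boundary repelling fixed point of dilation $\lambda$ satisfies $\sigma_1\geq\log\lambda$, so $\sigma_m\geq m\log\lambda$ and hence $\mu_j\geq\lambda$. Equality in the special and restricted case is automatic, since the defining properties of such an orbit force $\sigma_1=\log\lambda$, and this propagates to $\mu_j=\lambda$. The universal property of the pre-model is inherited verbatim from Theorem \ref{mainintro}.

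The main obstacle I anticipate is the passage from convergence of $g_j$ along individual backward orbits (which is essentially tautological from $\Sigma_j\subseteq\mathcal{S}(\zeta)$) to the full $K$-limit statement for arbitrary approach within a Koranyi region. This step requires combining the explicit description of Koranyi regions of $\H^{k_j}$ supplied by the normal form of $\tau_j$ with the Kobayashi contraction of $g_j$ and the strong pseudoconvexity of $\B^q$, and is where most of the analytic work of the theorem should sit.
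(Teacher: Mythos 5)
Your overall architecture matches the paper's: specialize Theorem \ref{mainintro}, use that holomorphic retracts of $\B^q$ are (biholomorphic to) balls, put the resulting hyperbolic automorphism of $\H^{k_j}$ in normal form, and get $\mu_j\geq\lambda$ from a Julia-type estimate. But three steps as written are genuinely wrong or circular. First, ``$\sigma_1\geq\log\lambda$, so $\sigma_m\geq m\log\lambda$'' goes the wrong way: the backward step sequence is subadditive ($\sigma_m\leq m\sigma_1$ by the triangle inequality), so the lower bound on $\sigma_1$ does not propagate. The paper instead applies the boundary dilation estimate to $f^m$, whose dilation at $\zeta$ is $\lambda^m$, obtaining $\sigma_m(x)\geq m\log\lambda$ directly for each $m$; this is also what you need to make $\tau_j$ hyperbolic, since ``orbits leave every compact'' only excludes the elliptic case and does not rule out a parabolic automorphism --- hyperbolicity comes from $c(\tau_j)=\log\mu_j\geq\log\lambda>0$, so your ordering (hyperbolicity first, Julia estimate later) is circular as stated.

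Second, and most seriously, the claim that a special and restricted backward orbit ``forces $\sigma_1=\log\lambda$'' is false. The paper computes, via the asymptotic formula of \cite[Proposition 5.4]{AB}, that along such an orbit
$$\sigma_1(x)=\log\frac{|e^{-2i\vartheta}+\lambda|+|1-\lambda|}{|e^{-2i\vartheta}+\lambda|-|1-\lambda|},$$
where $e^{i\vartheta}$ is the limit direction of approach; this equals $\log\lambda$ only for radial approach ($\vartheta=0$), and restricted convergence allows any $\vartheta\in(-\pi/2,\pi/2)$. The actual argument applies the same formula to $f^m$ to get $\sigma_m(x)$ for every $m$ and shows that the angular contribution washes out in the limit $\sigma_m(x)/m\to\log\lambda$; equality $\mu_j=\lambda$ is a statement about the limit, not about $\sigma_1$. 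Finally, for the $K$-limit of $g_j$ at $\infty$ you correctly identify the hard analytic step, but the paper does not reprove it: it verifies that a backward orbit of $\tau_j$ stays at bounded Kobayashi distance from the axis $\{w=0\}$ and then invokes \cite[Theorem 5.6]{AB}. Your sketch (bounded Kobayashi distance plus strong pseudoconvexity forces a common boundary limit, then bootstrap over Kor\'anyi regions) points in the right direction but is not a proof as it stands.
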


The   proof of Theorem \ref{mainintro}  involves the study of the complex structure of a decreasing intersection. We recall some results for the dual problem, that is the study of the complex structure  of a growing union, also called the {\sl union problem}.
Assume that we have a monotonically increasing sequence of domains of a complex manifold $\Omega$:
$$X_0\subset X_1\subset X_2\subset\dots,$$ and assume that $\Omega=\bigcup_{n\geq0}X_n.$ Assume moreover that every $X_j$ is biholomorphic to a given complex manifold $X$. One wants to understand  the complex structure of $\Omega$. Forn\ae ss \cite{F} gave a surprising example with $X=\B^3$ which is not Stein. Later Forn\ae ss and Sibony  \cite{FS} gave a description of the complex structure of $\Omega$ which implies the following result.
\begin{theorem}[Forn\ae ss--Sibony]
If $X$ is Kobayashi hyperbolic and cocompact, then there exists a holomorphic retract $Z\subset X$ and a  surjective holomorphic submersion $r\colon \Omega\to Z$ which satisfies the following universal property. If $Q$ is a Kobayashi hyperbolic complex manifold and $t\colon \Omega\to Q$ is a holomorphic mapping, then there exists a holomorphic mapping $\sigma\colon Z\to Q$ such that 
the following diagram commutes:
$$\xymatrix{\Omega\ar[r]^{t}\ar[d]_{r}& Q\\
Z\ar[ru]_\sigma.& }$$
 Moreover   $$k_{X_n}\nearrow k_{\Omega}, \quad\mbox{and}\quad \kappa_{X_n}\nearrow \kappa_{\Omega}.$$
% Finally if $X$ is the ball $\B^q$ (resp. the polydisc $\Delta^q$), then $Z$ is biholomorphic to a possibly lower-dimensional ball $ \B^k$ (resp. polydisc $\Delta^k$) where $0\leq k\leq q$.
 \end{theorem}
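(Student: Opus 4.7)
The plan splits into two pieces: the convergence of the Kobayashi invariants, and the geometric construction of the universal retract. For the first, monotonicity is immediate from the distance-decreasing property of the inclusions $X_n\hookrightarrow X_{n+1}\hookrightarrow\Omega$, so $(k_{X_n}(x,y))_n$ and $(\kappa_{X_n}(x,v))_n$ are monotone sequences bounding $k_\Omega(x,y)$ and $\kappa_\Omega(x,v)$. To obtain equality in the limit I would use the chain-of-disks definition: given $x,y\in\Omega$ and $\varepsilon>0$, a chain of analytic disks in $\Omega$ realizing $k_\Omega(x,y)+\varepsilon$ consists of finitely many maps $\D\to\Omega$ with relatively compact image, hence each contained in some $X_n$ for $n$ large, so $k_{X_n}(x,y)\le k_\Omega(x,y)+\varepsilon$. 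A pointwise analogue (applied to a single analytic disk realizing $\kappa_\Omega(x,v)+\varepsilon$) handles $\kappa$.

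For the construction of $r$ and $Z$, I would fix biholomorphisms $\v_n\colon X\to X_n$ and a base point $x_0\in X$. Using cocompactness of $X$, pre-compose each $\v_n$ by an automorphism of $X$ so that the points $\v_n(x_0)$ all accumulate in a compact subset of $\Omega$. Then for each fixed $m$, the sequence $\v_n^{-1}|_{X_m}\colon X_m\to X$, $n\ge m$, is a normal family of holomorphic maps into the hyperbolic manifold $X$. A Cantor diagonal argument produces compatible subsequential limits $\rho_m\colon X_m\to X$ which glue into a holomorphic map $r\colon\Omega\to X$. I would then set $Z\coloneqq r(\Omega)$: since $r\circ\v_n$ is a compact-open limit of automorphisms of $X$ and $X$ is cocompact hyperbolic, a rigidity argument forces the limit to have constant rank equal to $\dim Z$, and iterating the construction exhibits $Z$ as the image of an idempotent holomorphic self-map of $X$, that is, as a holomorphic retract, with $r$ a submersion onto it.

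The universal property is then obtained by a parallel limit argument: given $t\colon\Omega\to Q$ with $Q$ Kobayashi hyperbolic, the compositions $t\circ\v_n\colon X\to Q$ form a normal family thanks to the chosen normalization; passing to a subsequential limit along the same subsequence used for $r$ yields a map that must factor through $Z$, producing the desired $\sigma\colon Z\to Q$ with $t=\sigma\circ r$. Uniqueness of $\sigma$ follows from surjectivity of $r$.

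The principal obstacle is the rank-rigidity step underlying the assertions that $r$ is a submersion and that $Z$ is a well-defined retract independent of all choices. This rests on controlling the rank of compact-open limits of sequences in $\Aut(X)$, and fails without cocompactness; making this precise, and checking that $Z$ is canonical rather than dependent on subsequences or normalizing automorphisms, is the most delicate part of the argument.
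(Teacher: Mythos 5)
This statement is not proved in the paper at all: it is quoted as background from Forn\ae ss--Sibony \cite{FS}, and the paper's own work is the dual Theorem \ref{carmen} for decreasing intersections, proved in Section \ref{intersection}. That said, your outline is essentially the argument of \cite{FS}, and it runs parallel to the paper's proof of the dual result: normalize the biholomorphisms $\v_n\colon X\to X_n$ by automorphisms of $X$ (cocompactness) so that $\v_n^{-1}(x_0)$ stays in a compact set, use tautness of $X$ (which follows from cocompactness via complete hyperbolicity) to extract limits, and exhibit $Z$ as the image of an idempotent limit map, exactly as in Lemmas \ref{tre} and \ref{yob}. Three points in your sketch need repair. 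First, $r\circ\v_n$ is a limit of the univalent self-maps $\v_m^{-1}\circ\v_n$ of $X$, not of automorphisms, so the ``rigidity'' you invoke should be replaced by the rank semicontinuity-plus-factorization computation the paper uses to show $g|_Z$ is an immersion. Second, your normal-family derivation of the universal property needs $Q$ taut, whereas the theorem assumes only Kobayashi hyperbolicity; the standard fix uses the convergence statement itself: $k_\Omega=\lim k_{X_n}$ degenerates exactly along the fibres of the submersion $r$, so any holomorphic map to a hyperbolic $Q$ is constant on those fibres and factors through $Z$. Third, a disk in a chain computing $k_\Omega(x,y)$ need not have relatively compact image; one first restricts each disk to $\rho\D$ with $\rho<1$ at the cost of $\varepsilon$ before placing it inside some $X_n$.
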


Let now $X$ be a complex manifold, and assume there exists a monotonically decreasing sequence of domains $$X=X_0\supset X_1\supset X_2\supset\dots,$$ where every $X_j$ is biholomorphic to  $X$. 
Let $\Lambda$ denote the intersection $\bigcap_{n\geq0}X_n.$ What can be said about $\Lambda?$ Notice that $\Lambda$ can be empty and  in general is not a complex submanifold of $X$, so it is not  clear {\sl a priori} if it is endowed with a complex structure.
The next result  gives an answer to this problem, showing  that $\Lambda$ inherits some complex structure from the sequence $(X_j)$.

\begin{theorem}\label{carmen}
Let  $X$ be Kobayashi hyperbolic and cocompact. Then the subset $\Lambda\coloneqq \bigcap_{n\geq0}X_n$, if non-empty, is the disjoint union
 of complex  submanifolds $$\Lambda=\bigsqcup_{j\in J} \Sigma_j,$$ such that 
 for all $j\in J$ there exists a holomorphic retract  $Z_j$ and an injective holomorphic immersion $g_j\colon Z_j\to X$ satisfying
 $g_j(Z_j)=\Sigma_j$.
For all $j\in J$, the mapping $g_j$ satisfies the following universal property. If $Q$ is a complex manifold and $t\colon Q\to X$ is a holomorphic mapping such that $t(Q)\cap \Sigma_j\neq\varnothing $ and $t(Q)\subset \Lambda$, then
there exists a holomorphic mapping $\eta\colon Q\to Z_j$ such that the following diagram commutes:
$$\xymatrix{Q\ar[r]^{t}\ar[d]_{\eta}& X\\
Z_j\ar[ru]_{g_j}.& }$$
  Moreover, for all $j\in J$,  $$k_{X_n}|_{\Sigma_j}\searrow k_{\Sigma_j}, \quad\mbox{and}\quad \kappa_{X_n}|_{\Sigma_j}\searrow\kappa_{\Sigma_j}.$$
  % Finally if $X$ is the ball $\B^q$ (resp. the polydisc $\Delta^q$), then $Z$ is biholomorphic to a possibly lower-dimensional ball $ \B^k$ (resp. polydisc $\Delta^k$) where $0\leq k\leq q$.
\end{theorem}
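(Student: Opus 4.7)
My plan is to realize $\Lambda$ through subsequential limits of the biholomorphisms $\phi_n\colon X\to X_n$, viewed as univalent self-maps of $X$. Fix a base point $p\in X$ and a point $x\in\Lambda$. Using the cocompactness of $X$, I precompose each $\phi_n$ with an automorphism $\alpha_n\in\Aut(X)$ so that the normalized biholomorphism sends $p$ to $x$ for every $n$. Since $X$ is Kobayashi hyperbolic the family $\{\phi_n\}$ is normal, and a subsequential limit $\psi_x\colon X\to X$ exists with $\psi_x(p)=x$. A Hurwitz-type connectedness argument, using that $\phi_n(X)\subset X_m$ for $n\geq m$ and that $\psi_x(p)=x$ is interior to each $X_m$, shows that $\psi_x(X)\subset\bigcap_m X_m=\Lambda$.

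The central step, and the main obstacle, is to prove that $\psi_x$ factors as $\psi_x=g_x\circ \rho_x$, where $\rho_x\colon X\to Z_x$ is a holomorphic retraction onto a complex submanifold $Z_x\subset X$ and $g_x\colon Z_x\to X$ is an injective holomorphic immersion with image contained in $\Lambda$. The idea is to apply the same normal-family procedure to a suitably normalized sequence of inverses $\phi_n^{-1}$, extracting a holomorphic self-map $\rho_x\colon X\to X$ such that $\rho_x\circ\psi_x=\psi_x$. The idempotence $\rho_x\circ\rho_x=\rho_x$ then follows from taking a double subsequential limit, so $\rho_x$ is an honest holomorphic retraction onto $Z_x:=\rho_x(X)$, and $\psi_x$, being constant on the fibers of $\rho_x$, descends to the required injective immersion $g_x$. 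Controlling the limiting process carefully — ensuring idempotence, injectivity of the descended map, and that the rank does not collapse further than necessary — is the technical heart of the argument, and uses essentially both the cocompactness and the Kobayashi hyperbolicity of $X$.

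Setting $\Sigma_x:=g_x(Z_x)$, the universal property is then established by taking any $t\colon Q\to X$ with $t(Q)\subset\Lambda$ and $t(Q)\cap\Sigma_x\neq\varnothing$: the composition $\eta:=\rho_x\circ t\colon Q\to Z_x$ is holomorphic, and $g_x\circ \eta=t$ is forced on a nonempty open subset by the factorization of $\psi_x$ and extends to all of $Q$ by the identity principle. Uniqueness of $\eta$ (from the injectivity of $g_x$) implies that $\Sigma_x$ is independent of all extraction choices and forces $\Sigma_x\cap\Sigma_y\neq\varnothing\Rightarrow\Sigma_x=\Sigma_y$, yielding the disjoint union decomposition $\Lambda=\bigsqcup_{j\in J}\Sigma_j$ as well as the complete invariance of each $\Sigma_j$ (by applying the universal property to the inclusion-and-iteration of $f$, or directly in the abstract setting). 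Finally, the monotone convergence of $k_{X_n}|_{\Sigma_j}$ to $k_{\Sigma_j}$ and of $\kappa_{X_n}|_{\Sigma_j}$ to $\kappa_{\Sigma_j}$ follows from the universal property: any holomorphic disc into $\Sigma_j\subset X_n$ factors through $Z_j$, so the limiting Kobayashi (pseudo)distance is realized by chains in $Z_j\cong\Sigma_j$. Once the retract foliation of the second paragraph is in place, the remaining statements reduce to diagram-chasing with the universal property.
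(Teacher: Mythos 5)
Your overall strategy is the paper's: normalize the biholomorphisms $\phi_n$ by automorphisms so that the preimages of a fixed point of $\Lambda$ stay in a compact set, extract a limit $\psi_x$ by tautness, build a retraction from limits of inverses, factor $\psi_x$ through it to get the injective immersion, and derive the universal property and the disjoint decomposition from the limit of the inverses. Two of your formulas, however, are wrong as written and would make key steps fail. First, the identity you extract for the retraction should be $\psi_x\circ\rho_x=\psi_x$, not $\rho_x\circ\psi_x=\psi_x$: it is the former (obtained in the paper as $g\circ\alpha=g$, where $\alpha=\lim \phi_n^{-1}\circ\psi_x$) that makes $\psi_x$ constant on the fibers of $\rho_x$ and lets it descend to $g_x$ on $Z_x=\rho_x(X)$; the relation you wrote only says that $\psi_x$ maps into the fixed-point set of $\rho_x$, which is false in general and in any case gives no factorization. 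Second, in the universal property the map $\eta:=\rho_x\circ t$ does not work: since $\psi_x=g_x\circ\rho_x$, you get $g_x\circ\eta=\psi_x\circ t$, and $\psi_x$ is not the identity on $\Sigma_x$ (it is a limit of the normalized $\phi_n$, not of identity maps), so $g_x\circ\eta\neq t$. The correct extension is $\eta=\lim_n\phi_n^{-1}\circ t$, which converges pointwise on $Q$ (because $t(Q)$ lies in a single bounded-$k_{X_n}$-distance class, hence in $\Sigma_x=\psi_x(X)$, and $\phi_n^{-1}$ converges on $\psi_x(X)$), hence locally uniformly by Vitali, and satisfies $g_x\circ\eta=t$ by construction. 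Relatedly, your claim that $g_x\circ\eta=t$ "is forced on a nonempty open subset by the factorization of $\psi_x$'' has no justification.

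Two smaller points. Cocompactness only lets you arrange $\phi_n^{-1}(x)$ to lie in a fixed compact set, not to equal a prescribed base point $p$; this is harmless but your normalization as stated is too strong. More substantively, you never prove that $\Sigma_x$ is exactly the class $\{y\in\Lambda:\ \sup_n k_{X_n}(x,y)<\infty\}$ (the paper's Lemma 2.8); this characterization is what guarantees both that $t(Q)\cap\Sigma_x\neq\varnothing$ and $t(Q)\subset\Lambda$ force $t(Q)\subset\Sigma_x$ (needed before you can even define $\eta$), and that the pieces exhaust $\Lambda$ and are pairwise disjoint. Finally, for the convergence of the Kobayashi data the easy inequality is $k_{X_n}\leq k_{\Sigma_j}$ on $\Sigma_j$; the reverse limit inequality does not follow from "discs into $\Sigma_j$ factor through $Z_j$'' (the extremal discs for $k_{X_n}$ need not lie in $\Sigma_j$), but from $k_{X_n}(z,w)=k_X(\phi_n^{-1}(z),\phi_n^{-1}(w))\to k_X(\alpha(x),\alpha(y))=k_{Z_j}(\alpha(x),\alpha(y))$ using the convergence of the inverses and the fact that $Z_j$ is a retract.
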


The plan of the paper is as follows.
 Section \ref{intersection} is devoted to the problem of decreasing intersections and to the proof of Theorem \ref{carmen}.
In Section \ref{simba} we introduce the  pre-models and the stable subset, and we prove Theorem \ref{mainintro}. Finally, in Section  \ref{mufasa} we consider the case of $\B^q$ and we prove  Theorem \ref{timon}.

\section{The complex structure of a decreasing intersection}\label{intersection}
In this section we prove Theorem  \ref{carmen}.
Throughout the paper, complex manifolds are assumed to be connected unless  otherwise specified.
\begin{definition}

Let $X,Z,Q$ be complex manifolds and let $g\colon Z\to X$ and $t\colon Q \to X$ be  holomorphic mappings.

We say that {\sl $g$ extends $t$} if there exists a holomorphic mapping   $\eta\colon Q\to Z $ such that the following diagram commutes:
$$\xymatrix{Q\ar[r]^{t}\ar[d]_{\eta}& X\\
Z\ar[ru]_g.& }$$

If the mapping $\eta\colon Q\to Z $ is a biholomorphism, we say that  $g$ and $t$ are {\sl equivalent}. Notice that in this case, $t$ extends $g$ through the mapping $\eta^{-1}$.
\end{definition}
\begin{remark}\label{genio}
Let $g\colon Z\to X$ and $f\colon Q \to X$ be injective holomorphic mappings. Assume that $g$ extends $t$ through the mapping  $\eta\colon Q\to Z $. 
Then   $\eta\colon Q\to Z $ is injective and unique.
\end{remark}

\begin{lemma}\label{mulan}
Let $X,Z,Q$ be complex manifolds and let $g\colon Z\to X$ and $f\colon Q \to X$ be injective holomorphic mappings.
Assume that $g$ extends $t$.
Then the following are equivalent:
\begin{enumerate}
\item $t$ extends $g$,
\item $g(Z)\subset t(Q)$,
\item $\eta\colon Q\to Z$ is surjective,
\item $t$ and $g$ are equivalent.
\end{enumerate}
\end{lemma}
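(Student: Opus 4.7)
The plan is to prove the cyclic implications $(1)\Rightarrow(2)\Rightarrow(3)\Rightarrow(4)\Rightarrow(1)$, throughout exploiting the defining relation $t=g\circ\eta$ and the fact that, by Remark \ref{genio}, $\eta$ is both injective and unique.

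For $(1)\Rightarrow(2)$, since $t$ extends $g$ there is a holomorphic map $\mu\colon Z\to Q$ with $g=t\circ\mu$, so $g(Z)=t(\mu(Z))\subset t(Q)$. For $(2)\Rightarrow(3)$, pick any $z\in Z$; by hypothesis $g(z)\in t(Q)$, so $g(z)=t(q)$ for some $q\in Q$. Using $t=g\circ\eta$ and injectivity of $g$, this forces $\eta(q)=z$, showing $\eta$ is onto. For $(4)\Rightarrow(1)$, if $\eta\colon Q\to Z$ is a biholomorphism then $g=t\circ\eta^{-1}$, so $\eta^{-1}$ witnesses that $t$ extends $g$.

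The step I expect to require the most care is $(3)\Rightarrow(4)$. By Remark \ref{genio}, $\eta$ is injective; combined with the assumed surjectivity it is bijective and holomorphic. I would then appeal to the classical fact that a bijective holomorphic map between (finite-dimensional) complex manifolds is automatically a biholomorphism: injectivity of $\eta$ forces $\dim Q\leq\dim Z$, surjectivity (together with the constant-rank/proper mapping considerations) forces $\dim Z\leq\dim Q$, and on manifolds of equal dimension a bijective holomorphic map has nowhere-vanishing Jacobian determinant (otherwise the zero set of the Jacobian would be a proper analytic subset through which $\eta$ would fail to be locally injective), so $\eta^{-1}$ is holomorphic by the inverse function theorem. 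This yields the equivalence of $g$ and $t$.

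Assembling these four implications closes the cycle and proves the lemma. No new ingredients beyond the previously established uniqueness/injectivity of $\eta$ and the classical bijective-holomorphic-implies-biholomorphic fact are needed; the content is essentially bookkeeping of commutative diagrams together with this one structural input.
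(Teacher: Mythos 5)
Your proof is correct, and since the paper itself disposes of this lemma with ``The proof is trivial,'' your cyclic chase $(1)\Rightarrow(2)\Rightarrow(3)\Rightarrow(4)\Rightarrow(1)$ is exactly the routine argument that is intended. The only non-formal ingredient is the classical fact that a bijective holomorphic map between complex manifolds is a biholomorphism, which you invoke appropriately in $(3)\Rightarrow(4)$; everything else is bookkeeping with $t=g\circ\eta$ and the injectivity and uniqueness of $\eta$ from Remark \ref{genio}.
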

\begin{proof}
The proof is trivial.
\end{proof}

Let $X$ be a Kobayashi hyperbolic complex manifold. We say that $X$ is {\sl cocompact} if $X/{\rm aut}(X)$ is compact. Notice that this implies that $X$ is complete hyperbolic \cite[Lemma 2.1]{FS}.   Assume that  there exists a monotonically decreasing sequence of domains $$X=X_0\supset X_1\supset X_2\supset\dots,$$ where every $X_j$ is biholomorphic to  $X$, and  let $f_j\colon X\to X_j$ be a biholomorphism. Let $\Lambda$ denote the intersection $\bigcap_{n\geq0}X_n.$ 

\begin{remark}
Let $k_X$ denote the Kobayashi distance of $X$, and let $\kappa_X$ denote the Kobayashi metric. Let $x,y\in \Lambda$ and let $v\in T_xX$.
Then the sequences $(k_{X_n}(x,y))_{n\geq 0}$ and $(\kappa_{X_n}(x,v))_{n\geq 0}$ are monotonically increasing.
\end{remark}

\begin{remark}\label{immer}
Let $Q$ be a complex manifold and let $t\colon Q\to X$ be a holomorphic mapping with  $t(Q)\subset \Lambda$. Then $f_n^{-1}\circ t\colon Q\to X$ is a well-defined holomorphic mapping for all $n\geq 0$.
\end{remark}
\begin{definition}\label{catwoman}
We define an equivalence relation $\sim$ on $\Lambda$ in the following way: $x,y\in \Lambda$ are equivalent if and only if the sequence $(k_{X_n}(x,y))_{n\geq 0}$ is  bounded. Notice that $k_{X_n}(x,y)=k_{X}(f_n^{-1}(x),f_n^{-1}(y))$. The class of $x$ will be denoted by $[x]$. 
\end{definition}
\begin{remark}\label{pizza}
Definition \ref{catwoman} is independent of the biholomorphisms $(f_j\colon X\to X_j)$ chosen. 

\end{remark}

\begin{lemma}\label{giallo}
Let $Q$ be a complex manifold and let  $t\colon Q\to X$ be a holomorphic mapping with  $t(Q)\subset \Lambda$. Let $x,y\in t(Q)$. Then the sequence $(k_{X}(f_n^{-1}(x),f_n^{-1}(y)))$ is bounded, that is $x\sim y$.
Similarly, if $x= t(z)$ and $v= d_zt(\zeta)$ with $\zeta\in T_zQ$, then the sequence $(\kappa_{X}(f_n^{-1}(x),d_xf_n^{-1}(v)))$ is bounded.
\end{lemma}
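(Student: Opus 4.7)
The plan is to apply the distance-decreasing property of the Kobayashi (pseudo)distance and (pseudo)metric to a uniform holomorphic lift. By Remark \ref{immer}, for every $n\geq 0$ the composition $f_n^{-1}\circ t\colon Q\to X$ is a well-defined holomorphic mapping, and this is the source of a bound on the relevant sequences that does not depend on $n$.

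For the distance statement, pick $p,q\in Q$ with $t(p)=x$ and $t(q)=y$, which is possible since $x,y\in t(Q)$. Then $(f_n^{-1}\circ t)(p)=f_n^{-1}(x)$ and $(f_n^{-1}\circ t)(q)=f_n^{-1}(y)$, so the distance-decreasing property of $k_X$ under the holomorphic map $f_n^{-1}\circ t$ gives
$$k_X\bigl(f_n^{-1}(x),f_n^{-1}(y)\bigr)\leq k_Q(p,q).$$
The right-hand side is finite (and independent of $n$) because $Q$ is a connected complex manifold. Hence the sequence $(k_X(f_n^{-1}(x),f_n^{-1}(y)))$ is bounded, and by Definition \ref{catwoman} this means $x\sim y$.

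For the tangent vector statement, write $x=t(z)$ and $v=d_zt(\zeta)$ with $\zeta\in T_zQ$. By the chain rule,
$$d_z(f_n^{-1}\circ t)(\zeta)=d_xf_n^{-1}\bigl(d_zt(\zeta)\bigr)=d_xf_n^{-1}(v),$$
and the contractive property of the Kobayashi pseudometric under $f_n^{-1}\circ t$ yields
$$\kappa_X\bigl(f_n^{-1}(x),\,d_xf_n^{-1}(v)\bigr)\leq \kappa_Q(z,\zeta),$$
which is again a bound independent of $n$.

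There is no real obstacle here: the whole content of the lemma is the observation that the hypothesis $t(Q)\subset\Lambda$ gives a \emph{single} holomorphic mapping $t$ into $X$ that can be post-composed with each biholomorphism $f_n^{-1}\colon X_n\to X$, so the distance-decreasing property produces a uniform bound over $n$. No use of cocompactness, of the complete hyperbolicity of $X$, or of any structural information about $\Lambda$ is needed at this step.
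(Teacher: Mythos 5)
Your proof is correct and is essentially identical to the paper's: both use Remark \ref{immer} to view $f_n^{-1}\circ t$ as a single holomorphic map from $Q$ to $X$ and then apply the distance-decreasing property of the Kobayashi distance and metric to get the $n$-independent bounds $k_Q(z,w)$ and $\kappa_Q(z,\zeta)$. Your added remark that finiteness of $k_Q$ relies on connectedness of $Q$ (a standing assumption in the paper) is a fair, if minor, clarification.
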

\begin{proof}
Let  $x,y\in t(Q)$ and let $z,w\in Q$ such that $t(z)=x$ and $t(w)=y$. By Remark \ref{immer} we have, for all $n\geq0,$ $$k_{X}(f_n^{-1}(x),f_n^{-1}(y))\leq k_Q(z,w).$$
Similarly, again by Remark \ref{immer}, for all $n\geq 0$, $$\kappa_{X}(f_n^{-1}(x),d_xf_n^{-1}(v))\leq\kappa_{Q}(z,\zeta).$$
\end{proof}

Let $ x_0\in \Lambda$. Since $X$ is cocompact, there exist a compact subset $K\subset X$ and a family of automorphisms $(\tau_n\colon X\to X)$ such that $(f_n\circ \tau_n)^{-1}(x_0)\in K$ for all $n\geq 0$.
For all $n\geq 0$, denote $g_n\coloneqq f_n\circ \tau_n$.
Since by construction $x_0\in g_n(K)$ for all $n\geq 0$,  the sequence $(g_n\colon X\to X)$ does not diverge on compact subsets. Since $X$ is complete hyperbolic, it is taut, and hence up to taking a subsequence  $(g_n)$ converges uniformly on compact subsets to a holomorphic mapping $g\colon X\to X$.

\begin{lemma}\label{tre}
We have that $g(X)=[x_0]$.
\end{lemma}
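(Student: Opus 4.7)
The plan is to establish the two inclusions $g(X)\subseteq [x_0]$ and $[x_0]\subseteq g(X)$ separately, using throughout that each $g_n\colon X\to X_n$ is a biholomorphism, that the inclusion $X_n\subseteq X_m$ (for $n\geq m$) is distance decreasing for the Kobayashi metric, and that cocompactness of $X$ makes each $X_m$ complete hyperbolic, so that closed $k_{X_m}$-balls are compact in $X_m$.

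For the inclusion $g(X)\subseteq[x_0]$, fix $x\in X$ and $m\geq 0$, and set $p_n\coloneqq g_n^{-1}(x_0)\in K$. The biholomorphism $g_n\colon X\to X_n$ and the distance-decreasing inclusion $X_n\hookrightarrow X_m$ yield, for all $n\geq m$,
\[
k_{X_m}(g_n(x),x_0) \,=\, k_{X_m}(g_n(x),g_n(p_n)) \,\leq\, k_{X_n}(g_n(x),g_n(p_n)) \,=\, k_X(x,p_n) \,\leq\, C_x,
\]
where $C_x$ denotes the $k_X$-diameter of $\{x\}\cup K$. Thus $(g_n(x))_{n\geq m}$ lies in a closed $k_{X_m}$-ball around $x_0$, which is compact in $X_m$ by complete hyperbolicity. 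Since this sequence also converges to $g(x)$ in $X$, its limit must lie in this compact set, so $g(x)\in X_m$. As $m$ is arbitrary, $g(x)\in\Lambda$; passing the uniform bound to the limit via continuity of $k_{X_m}$ on $X_m\times X_m$ gives $k_{X_m}(g(x),x_0)\leq C_x$ for every $m$, whence $g(x)\sim x_0$.

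For the reverse inclusion $[x_0]\subseteq g(X)$, fix $y\in[x_0]$, set $M\coloneqq \sup_n k_{X_n}(y,x_0)<\infty$, and put $q_n\coloneqq g_n^{-1}(y)$. Since $g_n$ is a biholomorphism, $k_X(q_n,p_n)=k_{X_n}(y,x_0)\leq M$, so $(q_n)$ is contained in the set of points of $X$ at $k_X$-distance at most $M$ from $K$, which is relatively compact in $X$ by complete hyperbolicity. After extracting a subsequence, $q_n\to x\in X$; then local uniform convergence of $g_n$ to $g$ combined with $q_n\to x$ gives $y=g_n(q_n)\to g(x)$, so $y=g(x)\in g(X)$.

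The genuinely non-trivial step is the first inclusion: it is not automatic that the pointwise limit $g(x)$ of $g_n(x)\in X_n$ lies in $X_m$ rather than only in its closure in $X$. This is precisely the point at which cocompactness, through completeness of the Kobayashi distance on each $X_m$, is indispensable.
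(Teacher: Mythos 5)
Your proof is correct. The inclusion $[x_0]\subseteq g(X)$ is argued exactly as in the paper (bounded orbit of preimages, compactness of $\{x\colon k_X(x,K)\leq M\}$, subsequence extraction). Where you diverge is the inclusion $g(X)\subseteq[x_0]$: the paper first proves $g(X)\subseteq\Lambda$ by viewing $(g_m)_{m\geq n}$ as a non--compactly-divergent sequence of maps into the taut manifold $g_n(X)$ and invoking the tautness dichotomy to force the limit to land in $g_n(X)$, and only then deduces $g(X)\subseteq[x_0]$ from Lemma \ref{giallo}. You instead run a direct metric estimate, $k_{X_m}(g_n(x),x_0)\leq k_{X_n}(g_n(x),g_n(p_n))=k_X(x,p_n)\leq C_x$ for $n\geq m$, and use compactness of closed $k_{X_m}$-balls (complete hyperbolicity of $X_m$) both to keep the limit $g(x)$ inside $X_m$ and to pass the uniform bound to the limit; this yields $g(x)\in\Lambda$ and $g(x)\sim x_0$ in a single stroke, bypassing Lemma \ref{giallo}. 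Both arguments ultimately rest on the same consequence of cocompactness (completeness, hence tautness, of $X$ and of each $X_m$); yours is more quantitative and self-contained, while the paper's is shorter given that Lemma \ref{giallo} and the tautness machinery are already in place and reused elsewhere (e.g.\ in the proof of Proposition 2.17, which repeats your very concern about limits escaping to the boundary by ``arguing as in Lemma \ref{tre}'').
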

\begin{proof}
We  show that $[x_0]\subset g(X)$.
Let $y\in [x_0]$. Then the sequence $(k_X(g_{n}^{-1}(x_0),g_{n}^{-1}(y)))$ is bounded by $M>0$.
Since $X$ is complete hyperbolic, the subset $\{x\in X \colon k_X(x,K)\leq M\}$ is compact. This implies that up to a subsequence, $z_n\coloneqq g_{n}^{-1}(y)\to z\in X$, and
$$y=g_{n}(z_n)\to g(z).$$
We now show that  $g(X)\subset \Lambda$, and then Lemma \ref{giallo} yields $g(X)\subset[x_0]$. Let thus $n\geq 0$.  
Consider the sequence of holomorphic mappings $(g_m\colon X\to g_n(X))_{m\geq n}$. Since $x_0\in g_m(K)$ for all $m\geq n$, this sequence is not compactly divergent.  Since $g_n(X)$  is biholomorphic to $X$, it is taut, and hence up to taking a subsequence  $(g_m\colon X\to g_n(X))_{m\geq n}$ converges uniformly on compact subsets to a holomorphic mapping $\tilde g\colon X\to g_n(X)$. Since $g=\tilde g$, we have the result.  

\end{proof}

Consider now the sequence of holomorphic mappings $(\alpha_n\colon X\to X)$ defined by $$\alpha_n\coloneqq  g_n^{-1}\circ g.$$ 
Let $x'\in g^{-1}(x_0)$.
For all $n\geq 0$ we have $$\alpha_n(x')=g_n^{-1}(x_0)\in K,$$ hence $(\alpha_n)$ does not diverge on compact subsets, and thus  we can assume (up to taking a subsequence) that $\alpha_n$ converges uniformly on compact subsets to a holomorphic mapping $\alpha\colon X\to X$.
\begin{remark}\label{shiva}
If $z= g(x)$, then the sequence $g^{-1}_n(z)$ converges to the point $\alpha(x)$ in $X$.
If $\zeta\in T_xX$ and $v=d_xg(\zeta)\in T_zX$, then $d_zg^{-1}_n(v)\to d_x\alpha(\zeta).$
\end{remark}

\begin{lemma}\label{yob}
The map $\alpha\colon X\to X$ is a holomorphic retraction.
\end{lemma}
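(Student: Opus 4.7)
The plan is to verify directly that $\alpha$ is idempotent, i.e.\ $\alpha\circ\alpha=\alpha$, which is the defining property of a holomorphic retraction of $X$. The argument consists of two successive limit passages applied to simple algebraic identities satisfied by $\alpha_n$.

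First I would exploit the very definition $\alpha_n=g_n^{-1}\circ g$ to record the tautological identity $g_n\circ\alpha_n=g$ for every $n$ in the chosen subsequence. Note that $\alpha_n$ is well defined as a holomorphic map $X\to X$ because, by Lemma~\ref{tre}, $g(X)=[x_0]\subset\Lambda\subset X_n$, so $g_n^{-1}\circ g$ makes sense by Remark~\ref{immer}. Letting $n\to\infty$ in $g_n\circ\alpha_n=g$ I would conclude
\[
g\circ\alpha=g
\]
as holomorphic self-maps of $X$.

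Second, I would feed this identity back into the definition of $\alpha_n$ and compute, for each $n$,
\[
\alpha_n\circ\alpha \;=\; g_n^{-1}\circ g\circ\alpha\;=\;g_n^{-1}\circ g\;=\;\alpha_n.
\]
Letting $n\to\infty$ once more then gives $\alpha\circ\alpha=\alpha$, which is the desired conclusion.

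The only technical point to check is that composition commutes with the locally uniform limits involved. For the first limit, given a compact $K\subset X$, the local uniform convergence $\alpha_n\to\alpha$ ensures that the sets $\alpha_n(K)$ are eventually contained in a fixed compact neighborhood of $\alpha(K)$; on such a neighborhood $g_n\to g$ uniformly, so $g_n\circ\alpha_n\to g\circ\alpha$ locally uniformly. The second limit is even easier, because $\alpha$ is a single fixed continuous map, so $\alpha_n\circ\alpha\to\alpha\circ\alpha$ locally uniformly as soon as $\alpha_n\to\alpha$ does. Since these are routine facts on a taut manifold, I do not anticipate any genuine obstacle: the heart of the proof is the observation that the sequence $g_n\circ\alpha_n$ is \emph{constantly} equal to $g$, which forces $g\circ\alpha=g$ in the limit and then immediately yields the idempotence of $\alpha$.
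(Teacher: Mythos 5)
Your proof is correct and follows essentially the same route as the paper's: both arguments first establish $g\circ\alpha=g$ by passing to the limit in $g_n(\alpha_n(x))=g(x)$, and then deduce idempotence by comparing the two limits of $\alpha_n(\alpha(x))=g_n^{-1}(g(\alpha(x)))=g_n^{-1}(g(x))=\alpha_n(x)$. The paper phrases this pointwise via Remark~\ref{shiva} while you phrase it as identities of maps, but the content and the handling of the locally uniform limits are the same.
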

\begin{proof}
Let $x\in X$ and let $z=g(x)$.
Let $z_n\coloneqq g_n^{-1}(z)$. By Remark \ref{shiva}, $$g(x)=z=g_n(z_n)\to g(\alpha (x)).$$
We have that $\alpha_n( \alpha(x))\to \alpha(\alpha(x))$. Again by Remark \ref{shiva}, $$\alpha_n( \alpha(x))=g_n^{-1}(g(\alpha(x)))=g_n^{-1}(g(x))\to\alpha(x).$$
\end{proof}

\begin{remark}\label{merlino}
Denote $Z=\alpha(X)$. Since $\alpha$ is a holomorphic retraction, by \cite[Lemma 2.1.28]{A} the image  $Z$ is a closed complex submanifold of $X$.
In particular, if $X$ is the unit ball $\B^q$, then  by \cite[Corollary 2.2.16]{A} the retract $Z$ is an affine subset and hence biholomorphic to ball $\B^k$ with $0\leq k\leq q$. If $X$ is the polydisc $\Delta^q$, then by \cite[Theorem 3]{HS} the retract $Z$ is biholomorphic to a  polydisc $\Delta^k$ with $0\leq k\leq q$.
\end{remark}

\begin{lemma}
We have that $g(X)=g(Z)$ and that $g|_Z\colon Z\to X$ is an injective holomorphic immersion.

\end{lemma}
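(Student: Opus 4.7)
The plan is to pivot the entire argument on the identity $g=g\circ\alpha$; once this is in hand, all three assertions ($g(X)=g(Z)$, injectivity of $g|_Z$, and the immersion property) follow quickly from Remark \ref{shiva} and Lemma \ref{yob}.

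First I would prove $g=g\circ\alpha$. Fix $x\in X$ and set $y_n\coloneqq g_n^{-1}(g(x))$; by construction $g_n(y_n)=g(x)$ for every $n$, while Remark \ref{shiva} gives $y_n\to\alpha(x)$. Since $g_n\to g$ uniformly on compact subsets, a triangle-inequality estimate in a local chart around $\alpha(x)$ shows $g_n(y_n)\to g(\alpha(x))$. The left-hand side is the constant sequence $g(x)$, so $g(\alpha(x))=g(x)$, whence
$$g(X)=g(\alpha(X))=g(Z).$$

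Next I would verify injectivity of $g|_Z$. If $z_1,z_2\in Z$ satisfy $g(z_1)=g(z_2)\eqqcolon w$, then Lemma \ref{yob} (which says $\alpha$ is a retraction onto $Z$) yields $\alpha(z_i)=z_i$, and Remark \ref{shiva} applied with $x=z_i$ gives $g_n^{-1}(w)\to z_i$ for $i=1,2$. Uniqueness of the limit in $X$ forces $z_1=z_2$. For the immersion property, let $z\in Z$ and $v\in T_zZ$ with $d_zg(v)=0$. The differential assertion of Remark \ref{shiva} (with $x=z$ and $\zeta=v$) reads
$$d_{g(z)}g_n^{-1}\bigl(d_zg(v)\bigr)\to d_z\alpha(v);$$
the left-hand side is identically zero, so $d_z\alpha(v)=0$. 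Because $\alpha|_Z=\mathrm{id}_Z$, its differential fixes $T_zZ$ pointwise, and therefore $v=d_z\alpha(v)=0$.

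The only nontrivial point is the limit exchange in the first step: both the basepoint $y_n$ and the map $g_n$ vary with $n$. However, $g_n\to g$ uniformly on a relatively compact neighborhood of $\alpha(x)$, and $y_n$ eventually lies in that neighborhood; combining this uniform convergence with continuity of $g$ yields $g_n(y_n)\to g(\alpha(x))$ via a standard triangle inequality in a local chart. The rest is formal manipulation with the retraction $\alpha$ and its differential.
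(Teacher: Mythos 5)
Your proof is correct. The first two assertions are handled essentially as in the paper: the identity $g = g\circ\alpha$ (which the paper establishes inside the proof of Lemma \ref{yob} via exactly your limit exchange $g_n(y_n)\to g(\alpha(x))$) gives $g(X)=g(Z)$, and your injectivity argument via Remark \ref{shiva} is just a restatement of the paper's observation that $\alpha_n(x)=g_n^{-1}(g(x))$ converges to $\alpha(x)$, so equal images force equal limits. Where you genuinely diverge is the immersion step. The paper argues by rank: lower semicontinuity of the rank along the convergence $g_n^{-1}\circ g\to\alpha$ gives ${\rm rk}_z(g)\geq{\rm rk}_z(\alpha)$, while the chain rule applied to $g=g|_Z\circ\alpha$ gives the reverse inequality, forcing ${\rm rk}_z(g|_Z)=\dim_z Z$. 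You instead run a kernel argument: if $v\in T_zZ$ and $d_zg(v)=0$, the differential statement of Remark \ref{shiva} yields $d_z\alpha(v)=0$, and since $\alpha$ fixes $Z$ pointwise its differential is the identity on $T_zZ$, so $v=0$. Both arguments rest on the same two facts (convergence of $g_n^{-1}\circ g$ to $\alpha$ together with its differentials, and $\alpha$ being a retraction onto $Z$), but yours is more elementary and pointwise, whereas the paper's rank computation additionally records that the full map $g\colon X\to X$ has rank exactly $\dim_z Z$ at every point of $Z$ — slightly more information than the lemma asks for, though not needed here.
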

\begin{proof}

If $x\in X$, then $g(x)=g(\alpha(x))$ and $\alpha(x)\in Z$, thus $g(X)=g(Z)$.
Assume that $x,y\in Z$ and that $g(x)=g(y)$. Then $\alpha_n(x)=\alpha_n(y)$ for all $n\geq 0$, and thus $\alpha(x)=\alpha(y)$, which implies $x=y$. Hence  $g|_Z\colon Z\to X$ is injective.

We are left to prove that $g|_Z\colon Z\to X$ is an immersion.
Let  $x\in X$. Since $\alpha$ is the uniform limit on compact subsets of the sequence $g_n^{-1}\circ g$, we have by the lower semicontinuity of the rank that ${\rm rk}_x(g)\geq  {\rm rk}_x(\alpha)$.
 On the other hand we have that $${\rm rk}_x(g)={\rm rk}_x(g|_Z\circ \alpha)\leq {\rm min}\{{\rm rk}_{\alpha(x)}(g|_Z),{\rm rk}_x(\alpha)\}.$$ Then for all $z\in Z$ we have ${\rm rk}_z(g)={\rm rk}_z(g|_Z)= {\rm rk}_z(\alpha)={\rm dim}_z Z$.

\end{proof}

In what follows we set $\Sigma\coloneqq g(Z)$.
\begin{comment}
\begin{remark}
By Lemma \ref{giallo} any holomorphic mapping whose image is contained in $\Lambda$ and intersects $\Sigma$ has image contained in $\Sigma$. 
\end{remark}
\end{comment}
\begin{proposition}\label{mrpink}

Let $Q$ be a complex manifold and let $t\colon Q \to X $ be a holomorphic mapping such that $t(Q)\cap\Sigma \neq \varnothing$ and $t(Q)\in\Lambda$. Then $g\colon Z\to X$ extends $t$.

\begin{comment} Then the sequence of holomorphic mappings $(g_n^{-1}\circ f\colon Q\to X)$  converges uniformly on compact subsets to a holomorphic mapping $\beta\colon Q\to Z$ such that the following diagram commutes:
$$\xymatrix{Q\ar[r]^{f}\ar[d]_{\beta}& X\\
Z\ar[ru]_g.& }$$
\end{comment}

\end{proposition}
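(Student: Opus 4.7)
The plan is to exhibit the extending map $\eta\colon Q\to Z$ as the (subsequential) limit of the sequence of holomorphic maps $(g_n^{-1}\circ t\colon Q\to X)$. These maps are well-defined by Remark \ref{immer}, since each $g_n=f_n\circ\tau_n$ is a biholomorphism between $X$ and $X_n$, and $t(Q)\subset\Lambda\subset X_n$ for every $n$.

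First I would show the sequence $(g_n^{-1}\circ t)$ is not compactly divergent, so that tautness of $X$ yields a convergent subsequence. To this end, pick a base point $q_0\in Q$ with $t(q_0)\in\Sigma$, and write $t(q_0)=g(z_0)$ with $z_0\in Z$. By Remark \ref{shiva},
\[
g_n^{-1}(t(q_0))=g_n^{-1}(g(z_0))\longrightarrow \alpha(z_0)=z_0,
\]
since $z_0\in Z=\alpha(X)$ and $\alpha$ is a retraction. Moreover, because each $\tau_n$ is an automorphism of $X$, the same argument as in the proof of Lemma \ref{giallo} gives
\[
k_X\bigl(g_n^{-1}(t(q)),\, g_n^{-1}(t(q_0))\bigr)=k_X\bigl(f_n^{-1}(t(q)),\, f_n^{-1}(t(q_0))\bigr)\leq k_Q(q,q_0),
\]
so for each fixed $q\in Q$ the points $g_n^{-1}(t(q))$ stay in a fixed Kobayashi ball around a convergent sequence, which is relatively compact since $X$ is complete hyperbolic. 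Tautness then supplies a subsequence $g_{n_k}^{-1}\circ t\to\beta$ converging uniformly on compact subsets to some holomorphic $\beta\colon Q\to X$.

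The final step is to verify that $g\circ\beta=t$ and that $\beta(Q)\subset Z$. Passing to the limit in the identity $t(q)=g_{n_k}(g_{n_k}^{-1}(t(q)))$, and using that $g_{n_k}\to g$ uniformly on compact subsets while $g_{n_k}^{-1}(t(q))$ stays in a compact subset of $X$, yields $t(q)=g(\beta(q))$. Next, using $\alpha_n=g_n^{-1}\circ g$ together with $\alpha_n\to\alpha$, I compute
\[
\alpha(\beta(q))=\lim_k \alpha_{n_k}(\beta(q))=\lim_k g_{n_k}^{-1}(g(\beta(q)))=\lim_k g_{n_k}^{-1}(t(q))=\beta(q),
\]
so $\beta(q)$ is fixed by the retraction $\alpha$ and hence lies in $Z$. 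Setting $\eta\coloneqq\beta\colon Q\to Z$ then realizes $g$ as an extension of $t$. I expect the most delicate point to be this last computation, which requires juggling the two convergences $g_{n_k}\to g$ and $\alpha_{n_k}\to\alpha$ while exploiting the already-established identity $g(\beta(q))=t(q)$; once it is in place, uniqueness of $\beta$ is automatic from injectivity of $g|_Z$, and this forces the full sequence $(g_n^{-1}\circ t)$ to converge to $\beta$, not merely the extracted subsequence.
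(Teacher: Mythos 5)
Your proof is correct and follows essentially the same route as the paper: the extending map is obtained as the limit of the sequence $(g_n^{-1}\circ t)$, with the identities $g\circ\eta=t$ and $\eta(Q)\subset Z$ read off from $\alpha_n=g_n^{-1}\circ g$ and the retraction $\alpha$. The paper compresses this by noting that $t(Q)\subset\Sigma=g(X)$ (via Lemma \ref{giallo} and Lemma \ref{tre}), so Remark \ref{shiva} gives pointwise convergence directly and Vitali's theorem upgrades it to uniform convergence on compacta; your normal-families extraction and the final uniqueness argument supply the same conclusion with the implicit steps written out.
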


\begin{proof}
 By Remark \ref{shiva} the sequence $(g_n^{-1}\circ t\colon Q\to X)$ converges pointwise to a map   $\eta\colon  Q\to X$ such that  $g\circ \eta=t$ and $\eta(Q)\subset Z$. Since $X$ is taut, by Vitali's Theorem the convergence is uniform on compact subsets and  $\eta$ is holomorphic. 
\end{proof}

\begin{corollary}\label{aristogatti}

Let $Q$ be a complex manifold and let $t\colon Q\to X$ be an injective holomorphic mapping such that $t(Q)=\Sigma$. Then $t\colon Q\to X$ and $g\colon Z\to X$ are equivalent.
\end{corollary}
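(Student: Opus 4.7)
The plan is that this is essentially a direct combination of Proposition \ref{mrpink} and Lemma \ref{mulan}, so I would not expect to do any real new work. First I would invoke Proposition \ref{mrpink} applied to the injective mapping $t\colon Q\to X$: since by assumption $t(Q)=\Sigma$, we certainly have $t(Q)\cap \Sigma=\Sigma\neq\varnothing$ and $t(Q)\subset \Lambda$ (because $\Sigma=g(Z)\subset \Lambda$). The proposition then produces a holomorphic mapping $\eta\colon Q\to Z$ such that $g\circ \eta=t$; in other words, $g\colon Z\to X$ extends $t\colon Q\to X$.

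Having this extension, I would then appeal to Lemma \ref{mulan}. Note that the hypothesis of the lemma requires $g|_Z$ and $t$ to be injective: the injectivity of $t$ is given, and the injectivity of $g|_Z$ is exactly the content of the lemma that showed $g|_Z\colon Z\to X$ is an injective holomorphic immersion. To apply Lemma \ref{mulan} I need to verify one of the four equivalent conditions. The most immediate one is condition (2), namely $g(Z)\subset t(Q)$, and this is trivially satisfied because $g(Z)=\Sigma=t(Q)$ by definition of $\Sigma$ and by assumption on $t$. Therefore condition (4) of the lemma holds, i.e. $t$ and $g$ are equivalent, which is exactly the statement of the corollary.

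There is really no obstacle here beyond bookkeeping: Proposition \ref{mrpink} produces $\eta$, and since both $t(Q)$ and $g(Z)$ equal $\Sigma$, the containment that upgrades "extension" to "equivalence" in Lemma \ref{mulan} is automatic. By Remark \ref{genio} the mapping $\eta$ is moreover the unique such mapping, and it is a biholomorphism $\eta\colon Q\to Z$ with $g\circ\eta=t$, as desired.
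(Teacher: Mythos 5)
Your proposal is correct and follows exactly the paper's own argument: invoke Proposition \ref{mrpink} to get that $g$ extends $t$, then apply Lemma \ref{mulan} using the trivial containment $g(Z)=\Sigma=t(Q)$ to upgrade the extension to an equivalence. You have merely spelled out the hypothesis checks that the paper leaves implicit.
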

\begin{proof}
By Proposition \ref{mrpink} we have that $g$ extends $t$.  Lemma \ref{mulan} yields the result.
\end{proof}

\begin{definition}
We call the injectively immersed complex submanifold $\Sigma\subset X$ a {\sl canonical submanifold}.
\end{definition}

\begin{example}
Let
 $$\D=X_0\supset X_1\supset X_2\supset\dots,$$ be  a monotonically decreasing sequence of simply connected domains in the unit disc. Then $\Lambda\coloneqq \bigcap_{n\geq 0}X_n$, if non-empty, is the disjoint union of the following canonical submanifolds:
each connected component of the interior part $\mathring{\Lambda}$ is simply connected and is a canonical submanifold. If $x\in \Lambda\smallsetminus \mathring{\Lambda}$, then $\{x\}$ is a canonical submanifold.
\end{example}

\begin{definition}
The {\sl tangent space}  of the canonical submanifold   $\Sigma$  at a point $x=g(z)$ is defined as $$T_x\Sigma\coloneqq dg_z(T_zZ)\subset T_x X.$$
\end{definition} It is natural to ask whether this complex subspace coincides with the subset $$V_x\coloneqq \{v\in T_xX\colon (\kappa_{X}(g_{n}^{-1}(x),d_xg_n^{-1}(v)))\ \mbox{is bounded}\}.$$ The following result shows that this is indeed the case.
Notice that {\sl a priori} it is not even  clear that $V_x$ is a complex vector subspace of $T_xX$: from the properties of the Kobayashi metric we can only infer that it is a complex cone. 

\begin{proposition}
For all $x\in \Lambda$ we have that $V_x= T_x \Sigma$.
\end{proposition}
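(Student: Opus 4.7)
The plan is to prove the two inclusions $T_x\Sigma\subset V_x$ and $V_x\subset T_x\Sigma$ separately. The first is a continuity argument that uses the compact-open convergence $\alpha_n=g_n^{-1}\circ g\to \alpha$ established earlier, while the second is the substantive direction: it requires producing a holomorphic disc in $\Lambda$ tangent to the given vector and then invoking the universal property of $g$ from Proposition \ref{mrpink}.

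For $T_x\Sigma\subset V_x$, I would write $x=g(z)$ with $z\in Z$ and $v=d_zg(\zeta)$ for some $\zeta\in T_zZ$, and apply the chain rule to obtain
$$d_xg_n^{-1}(v)=d_z(g_n^{-1}\circ g)(\zeta)=d_z\alpha_n(\zeta).$$
Since $\alpha_n\to\alpha$ uniformly on compact subsets, Cauchy's estimates yield $d_z\alpha_n(\zeta)\to d_z\alpha(\zeta)$ in $T_{\alpha(z)}X$ and $g_n^{-1}(x)=\alpha_n(z)\to\alpha(z)$ in $X$. Tautness of $X$ (a consequence of cocompactness) gives continuity of $\kappa_X$ on the total space $TX$, so
$$\kappa_X\bigl(g_n^{-1}(x),d_xg_n^{-1}(v)\bigr)\longrightarrow \kappa_X\bigl(\alpha(z),d_z\alpha(\zeta)\bigr)<\infty,$$
which proves boundedness and hence $v\in V_x$.

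For the reverse inclusion, fix $v\in V_x$ and $M>0$ with $\kappa_X(g_n^{-1}(x),d_xg_n^{-1}(v))\leq M$ for all $n$. The definition of the Kobayashi metric furnishes, for each $n$, a holomorphic disc $\phi_n\colon\D\to X$ with $\phi_n(0)=g_n^{-1}(x)$ and $(M+1)\phi_n'(0)=d_xg_n^{-1}(v)$. Pushing forward by the biholomorphism, the discs $\psi_n:=g_n\circ \phi_n\colon\D\to X_n\subset X$ satisfy the $n$-independent data $\psi_n(0)=x$ and $(M+1)\psi_n'(0)=v$; in particular the sequence $(\psi_n)$ does not diverge compactly in $X$, so tautness provides a locally uniform subsequential limit $\psi\colon\D\to X$ with the same initial jet.

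The key step, which I expect to be the main obstacle, is showing $\psi(\D)\subset \Lambda$ so that Proposition \ref{mrpink} applies. Fix $m$: for $n\geq m$ one has $\psi_n(\D)\subset X_n\subset X_m$; since $X_m\cong X$ is taut and the sequence is not compactly divergent in $X_m$ (because $\psi_n(0)=x\in X_m$), a subsequence converges inside $X_m$, and by uniqueness of limits this coincides with $\psi$. Hence $\psi(\D)\subset X_m$ for every $m$, whence $\psi(\D)\subset \Lambda$. Since $\psi(0)=x\in\Sigma$, Proposition \ref{mrpink} produces a holomorphic map $\eta\colon\D\to Z$ with $g\circ\eta=\psi$; injectivity of $g|_Z$ forces $\eta(0)=z$, and evaluating the chain rule at $0$ yields
$$\frac{v}{M+1}=\psi'(0)=d_zg(\eta'(0))\in d_zg(T_zZ)=T_x\Sigma,$$
so by linearity $v\in T_x\Sigma$, completing the argument.
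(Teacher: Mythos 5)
Your proof is correct and follows essentially the same route as the paper: the inclusion $V_x\subset T_x\Sigma$ is obtained exactly as in the text by taking near-extremal Kobayashi discs $\phi_n$ through $g_n^{-1}(x)$, pushing them forward by $g_n$, extracting a taut limit disc with image in $\Lambda$, and factoring it through $g$ via Proposition \ref{mrpink}, finishing by linearity of $T_x\Sigma$. The only immaterial difference is in the easy inclusion $T_x\Sigma\subset V_x$, where the paper simply invokes Lemma \ref{giallo} applied to $g|_Z$ while you rederive the boundedness directly from the convergence $\alpha_n\to\alpha$.
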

\begin{proof}
From Lemma \ref{giallo} it follows that $T_x \Sigma\subset V_x$. 
For the converse inclusion, let $v\in V_x$. Assume that the sequence $ (\kappa_{X}(g_{n}^{-1}(x),d_xg_n^{-1}(v)))$ is bounded by $N>0$ and let $M\coloneqq N+1$. 
Then by definition of the Kobayashi metric (see, e.g., \cite[(3.5.16)]{Kob}), there exists a family of holomorphic mappings $(r_j)_{j\geq 0}\colon \D\to X$ such that $r_j(0)=g_j^{-1}(x)$ for all $j\geq 0$ and $r_j'(0)=\frac{2}{M}d_xg_j^{-1}(v)$. Define a family of holomorphic mappings from the disc $\D$ to $X$ setting $t_j=g_j\circ r_j$ for all $j\geq 0$. Then $t_j(0)=x$ and $t'_j(0)=\frac{2}{M}v$ for all $j\geq 0$. Since $X$ is taut, there exists a holomorphic mapping $t\colon \D\to X$ with $t(0)=x$ and $t'(0)=\frac{2}{M}v$ such that $t_j\to t$ uniformly on compact subsets. For all $n\geq 0$ the image $t_j(\D)$ is eventually contained in $g_n(X)$. Since all the domains $g_n(X)$ are taut, arguing as in Lemma \ref{tre} we obtain that the image $t(\D)$ is contained in $\Lambda$. By Proposition \ref{shiva} there exists a holomorphic mapping $\ell\colon \D\to Z$ such that 
 \begin{equation}\label{oi}
 t=g\circ \ell.
 \end{equation}
  Moreover, $\frac{2}{M}v=t'(0)=g'(\ell'(0))$, hence $\frac{2}{M}v$ belongs to $ T_x \Sigma$. Since $ T_x \Sigma$ is a complex subspace, the result follows.
 \end{proof}

\begin{comment}
\begin{corollary}
If $x\in \Lambda$, the following are equivalent:
\begin{enumerate}
\item the dimension of the canonical submanifold containing $x$ is strictly greater than zero,
\item there exists a complex manifold $Q$ and a non constant holomorphic mapping $h\colon Q\to X$ such that $x\in h(Q)\subset \Lambda$,
\item there exists $v\in T_xX$ such that the sequence $ (\kappa_{X}(g_n^{-1}(x),d_xg_n^{-1}(v))$ is bounded.
\end{enumerate}
\end{corollary}
\begin{proof}
The proof is trivial.
\end{proof}
\end{comment}

\begin{remark}
Since $Z$ is a holomorphic retract of $X$, for all $x,y\in Z$ and $\zeta\in T_xZ$ we have  $k_Z(x,y)=k_X(x,y)$ and $\kappa_Z(x,\zeta)=\kappa_X(x,\zeta)$. Moreover $Z$ is complete  hyperbolic.
\end{remark}

\begin{definition}
Let $\Sigma$ be a canonical submanifold. Then its Kobayashi distance $k_\Sigma$ is defined as
$$\forall x,y\in\Sigma,\quad k_{\Sigma}(x,y)\coloneqq k_Z(z,w),$$ where $g(z)=x$, $g(w)=y$, and its Kobayashi metric $\kappa_\Sigma$ is defined as
$$\forall x \in\Sigma,v\in T_x\Sigma,\quad \kappa_{\Sigma}(x,v)\coloneqq \kappa_Z(z, \zeta),$$ where $g(z)=x$ and $d_z g (\zeta)=v.$
\end{definition}
Next we show that  the Kobayashi distance  on canonical submanifolds is approximated by $k_{X_j}$, and that the same holds for  the Kobayashi metric.

\begin{proposition}\label{mryellow}
Let $z,w\in \Sigma$ and let $v\in T_z\Sigma$. Then $$k_{X_n}(z,w)\to k_{\Sigma}(z,w), \quad\mbox{and}\quad \kappa_{X_n}(z,v)\to\kappa_{\Sigma}(z,v).$$

%Let $z,w\in g(Z)$ and let  $x,y\in Z$ be such that $g(x)=z$ and $g(y)=w$. Then $k_X(g_n^{-1}(z),g_n^{-1}(w))\to k_Z(x,y)$.  
\end{proposition}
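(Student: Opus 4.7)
The plan is to sandwich $k_{X_n}(z,w)$ between $k_\Sigma(z,w)$ from above and a sequence with limit $k_\Sigma(z,w)$ from below, and then invoke the monotonicity of $(k_{X_n}(z,w))_{n\geq 0}$ recorded earlier. For the upper bound, since $\Sigma = g(Z)\subset X_n$ for every $n$, the map $g\colon Z\to X_n$ is holomorphic and therefore contractive for the Kobayashi distance; writing $z = g(z')$ and $w = g(w')$ with $z',w'\in Z$, this gives
$$k_{X_n}(z,w)\leq k_Z(z',w') = k_\Sigma(z,w).$$

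For the lower bound I would use the biholomorphism $g_n\colon X\to X_n$ to rewrite
$$k_{X_n}(z,w) = k_X(g_n^{-1}(z), g_n^{-1}(w)) = k_X(\alpha_n(z'), \alpha_n(w')),$$
where $\alpha_n = g_n^{-1}\circ g$. Along the subsequence selected in the construction, $\alpha_n\to\alpha$ uniformly on compact subsets, where $\alpha\colon X\to Z$ is the retraction of Lemma \ref{yob}. Since $\alpha\circ\alpha = \alpha$ and $Z = \alpha(X)$, the map $\alpha$ restricts to the identity on $Z$, so $\alpha_n(z')\to z'$ and $\alpha_n(w')\to w'$. By continuity of $k_X$, this forces $k_X(\alpha_n(z'), \alpha_n(w'))\to k_X(z',w') = k_\Sigma(z,w)$ along the subsequence. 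Combined with monotonicity of the full sequence and the upper bound, this yields $k_{X_n}(z,w)\to k_\Sigma(z,w)$.

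The metric convergence follows by the same template at the infinitesimal level. Write $v = d_{z'}g(v')$ for $v'\in T_{z'}Z$. The contractivity of $g\colon Z\to X_n$ for the Kobayashi metric gives $\kappa_{X_n}(z,v)\leq \kappa_\Sigma(z,v)$, while
$$\kappa_{X_n}(z,v) = \kappa_X(\alpha_n(z'), d_{z'}\alpha_n(v')).$$
Cauchy estimates imply $d\alpha_n\to d\alpha$ uniformly on compact subsets, and differentiating $\alpha\circ\alpha = \alpha$ at $z'\in Z$ shows that $d_{z'}\alpha$ is the identity on $T_{z'}Z$; hence $d_{z'}\alpha_n(v')\to v'$. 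Since $X$ is taut, $\kappa_X$ is continuous on $TX$, and the same sandwich argument concludes. The only delicate point throughout is the identification $\alpha|_Z = \id_Z$ and its differential analogue, both of which follow immediately from idempotency of the retraction; I anticipate no real obstacle beyond assembling these ingredients.
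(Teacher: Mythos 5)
Your argument is correct and follows essentially the same route as the paper: both reduce $k_{X_n}(z,w)$ to $k_X(\alpha_n(z'),\alpha_n(w'))$, use the convergence $\alpha_n\to\alpha$ together with $\alpha|_Z=\id_Z$ (and its differential version) and the continuity of $k_X$ and $\kappa_X$, and then invoke the retract property $k_Z=k_X|_Z$, $\kappa_Z=\kappa_X|_{TZ}$. Your extra sandwich via monotonicity, which upgrades convergence along the chosen subsequence to convergence of the full sequence, is a welcome (if minor) point of additional care that the paper leaves implicit.
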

\begin{proof}
 Let  $x,y\in Z$ be such that $g(x)=z$ and $g(y)=w$.
By Remark \ref{shiva}, $$k_{X_n}(z,w)=k_X(g_n^{-1}(z), g_n^{-1}(w))\to k_X(\alpha(x),\alpha(y))=k_X(x,y).$$  Since $Z$ is a holomorphic retract of $X$, we have $k_X(x,y)=k_Z(x,y)\coloneqq k_{g(Z)}(z,w)$. 

Similarly, let  $\zeta\in T_xZ$ be such that $dg_x(\zeta)=v$. By Remark \ref{shiva},
$$\kappa_{X_n}(z,v)=\kappa_X(g_n^{-1}(z), d_zg_n^{-1}(v))\to \kappa_X(\alpha(x), d_x\alpha(\zeta))=\kappa_X(x,\zeta).$$
Since $Z$ is a holomorphic retract of $X$, we have $ \kappa_X(x,\zeta)=\kappa_Z(x,\zeta)\coloneqq \kappa_{g(Z)}(z,v)$. 

\end{proof}

The proof of Theorem \ref{carmen} follows easily from the results of this section.

\begin{comment}
We end this section with an open question.
\begin{question}
Is there an example of a canonical submanifold  which is not embedded? \end{question}
\end{comment}

\section{Pre-models and the stable subset}\label{simba}

In this section we prove Theorem  \ref{mainintro}.
We begin  with few basic properties of the backward orbits for self-maps of sets.

\begin{definition}
Let $X$ be a set and let $f\colon X\to X$ be a self-map. 
A {\sl backward orbit} is a sequence $(x_n)$ in $X$ such that $f(x_{n+1})=x_n$ for all $n\geq 0$.
The point $x_0$ is called the {\sl starting point} of the backward orbit $(x_n)$.
We denote $\Lambda\coloneqq \bigcap_{n\geq 0}f^n(X)$, and
we denote ${\rm BO}(f)$ the subset consisting of all $x\in X$ such that there exists a backward orbit starting at $x$. 
\end{definition}
Clearly ${\rm BO}(f)$ coincides with the union of all  backward orbits in $X$.

\begin{remark}\label{paperoga}
We have that ${\rm BO}(f)\subset \Lambda$.
\end{remark}

\begin{definition}
If $f\colon X\to X$ is injective and $y\in f(X)$, then we denote by $f^{-1}(y)$ the only point $x\in X$ such that $f(x)=y$.
\end{definition}
\begin{lemma}\label{basic}
If $f$ is injective, then ${\rm BO}(f)= \Lambda$.
\end{lemma}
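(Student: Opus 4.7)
The inclusion ${\rm BO}(f) \subset \Lambda$ is already given by Remark \ref{paperoga}, so the plan is to establish the reverse inclusion $\Lambda \subset {\rm BO}(f)$ using the injectivity of $f$. The idea is simply to use the injectivity of $f$ to define, for each $x \in \Lambda$, a well-defined backward orbit starting at $x$.

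First I would observe that injectivity of $f$ implies injectivity of every iterate $f^n$. Hence, for $y \in f^n(X)$ there is a unique $z \in X$ with $f^n(z) = y$, which we may denote $(f^n)^{-1}(y)$. Now fix $x \in \Lambda$, so that $x \in f^n(X)$ for every $n \geq 0$. Set
\[
x_0 \coloneqq x, \qquad x_n \coloneqq (f^n)^{-1}(x_0) \quad \text{for } n \geq 1,
\]
which is well-defined precisely because $x_0 \in f^n(X)$.

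The remaining step is to verify that $(x_n)$ satisfies $f(x_{n+1}) = x_n$ for all $n \geq 0$. By construction, $f^{n+1}(x_{n+1}) = x_0$, i.e.\ $f^n(f(x_{n+1})) = x_0$, so $f(x_{n+1})$ is a preimage of $x_0$ under $f^n$; by injectivity of $f^n$ this preimage is unique, hence $f(x_{n+1}) = (f^n)^{-1}(x_0) = x_n$. Thus $(x_n)$ is a backward orbit starting at $x$, which shows $x \in {\rm BO}(f)$.

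There is no real obstacle here: the argument is a short bookkeeping exercise that only uses the uniqueness of preimages guaranteed by injectivity. The proof is essentially one line, and I would expect the author to simply write it as such.
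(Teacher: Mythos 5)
Your argument is correct and coincides with the paper's own proof: both define $x_n$ as the unique $f^n$-preimage of $x$ (well-defined since $x\in f^n(X)$ and $f^n$ is injective) and verify $f(x_{n+1})=x_n$ by cancelling $f^n$ using injectivity. No differences worth noting.
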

\begin{proof}
We just need to prove that every  $x\in \Lambda$ is the starting point of a backward orbit. We claim that $(f^{-n}(x))$ is a backward orbit starting at $x$. Indeed, for all $n\geq 0$, from $f^{n+1}(f^{-n-1}(x)))=f^n(f^{-n}(x))$ we get $ f(f^{-n-1}(x)))=f^{-n}(x)$, as claimed.
\end{proof}

\begin{definition}
Let $X$ be a set and let $f\colon X\to X$ be a self-map. A subset $Y\subset X$ is {\sl forward invariant} if $f(Y)\subset Y$.  A subset $Y\subset X$ is {\sl completely invariant} if $f(Y)\cup f^{-1}(Y)\subset Y$.
\end{definition}
Clearly, both $\Lambda$ and ${\rm BO}(f)$ are  forward invariant. If $f$ is injective, we can say more.
\begin{lemma}\label{gaston}
 If $f$ is injective, then $\Lambda$ is completely invariant, and  the mapping $f|_\Lambda\colon\Lambda \to\Lambda$ is bijective.
\end{lemma}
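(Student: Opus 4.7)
The plan is to leverage Lemma \ref{basic}, which already establishes that for injective $f$ we have ${\rm BO}(f) = \Lambda$, together with the forward invariance of $\Lambda$ noted just before the statement. The only new content is complete invariance (i.e., $f^{-1}(\Lambda) \subset \Lambda$) and the surjectivity of $f|_\Lambda$, both of which reduce to the same simple observation about backward orbits.

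First I would observe that $\Lambda \subset f(X)$, so for every $y \in \Lambda$ the preimage $f^{-1}(y)$ is well defined (as $f$ is injective). For complete invariance, given $y \in \Lambda$, Lemma \ref{basic} supplies a backward orbit $(y_n)_{n \geq 0}$ with $y_0 = y$. By definition $f(y_1) = y_0 = y$, and since $f$ is injective this forces $y_1 = f^{-1}(y)$. On the other hand, the shifted sequence $(y_{n+1})_{n \geq 0}$ is itself a backward orbit starting at $y_1$, so $y_1 \in {\rm BO}(f) = \Lambda$. Hence $f^{-1}(y) \in \Lambda$, which combined with forward invariance $f(\Lambda) \subset \Lambda$ proves $\Lambda$ is completely invariant.

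For bijectivity of $f|_\Lambda \colon \Lambda \to \Lambda$, injectivity is immediate from injectivity of $f$ on all of $X$. For surjectivity, take $y \in \Lambda$ and use the construction above to produce $x := f^{-1}(y) \in \Lambda$; then $f(x) = y$, so $f|_\Lambda$ is onto.

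There is essentially no obstacle here: the work has been done in Lemma \ref{basic}, and the present lemma is a direct unpacking of what it means for $y \in \Lambda = {\rm BO}(f)$ to admit a backward orbit. The only point worth being careful about is not to confuse $f^{-1}(\Lambda)$ (the set-theoretic preimage under $f \colon X \to X$) with the image of $\Lambda$ under some alleged inverse on $X$; the complete invariance statement is precisely what allows one to then define a genuine inverse $(f|_\Lambda)^{-1} \colon \Lambda \to \Lambda$.
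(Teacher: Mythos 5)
Your proof is correct and follows essentially the same route as the paper: the author likewise deduces $f^{-1}(x)\in\Lambda$ for $x\in\Lambda$ from the fact that $x$ starts a backward orbit (Lemma \ref{basic} and Remark \ref{paperoga}), which gives both complete invariance and surjectivity of $f|_\Lambda$, with injectivity being immediate. Your explicit shifting of the backward orbit just spells out the step the paper leaves implicit.
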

\begin{proof}
The first statement is trivial. Let $f$ be injective, and let $x\in \Lambda$. 
By Remark \ref{paperoga} we have $f^{-1}(x)\in \Lambda$, which implies that $\Lambda$ is completely invariant and that the mapping $f|_\Lambda\colon\Lambda \to\Lambda$ is surjective. Since the mapping $f|_\Lambda$ is clearly injective, we have the result.

\end{proof}
\begin{definition}
If $f$ is injective, we call $\Lambda$ the {\sl invariant subset} of $f$.

\end{definition}

The following example is a modification of \cite[Example 2.1]{G} and shows that if $f$ is not injective, then Lemma  \ref{basic} and  Lemma \ref{gaston} do no longer  hold. 
\begin{example}
Let $A\subset \Z^2$ be the subset defined by $\{(n,m)\in \Z^2\colon 0\leq m\leq n\}$. Let $X$ be the disjoint union of $A$ with two singletons $\{p\}$ and $\{q\}$. We define a self-map $f\colon X\to X$ in the following way. On $ A\smallsetminus \{m=0\}$ set $f(n,m)\coloneqq (n,m-1),$ and on $A\cap\{m=0\}$ set $f(n,0)\coloneqq p$. Finally  set $f(p)\coloneqq q$ and $f(q)\coloneqq q$.
Then ${\rm BO}(f)=\{q\}$, while $\Lambda=\{p\}\cup\{q\}$. Moreover, $\Lambda$ is not completely invariant, and $f|_\Lambda\colon\Lambda \to\Lambda$ is neither injective nor surjective. Notice that  ${\rm BO}(f)$ is not completely invariant either.
\end{example}
See also \cite[Proposition 4.1]{G} for a  holomorphic self-map $f\colon \D\to \D$ such that ${\rm BO}(f)\neq \Lambda$.
 It is also easy to construct an example where $f\colon {\rm BO}(f)\to {\rm BO}(f)$ is not injective (clearly  it is  always surjective).

We now move back to  holomorphic self-maps of complex manifolds in order to introduce the pre-models and the morphisms between them.
\begin{definition}
Let $X$ be a complex manifold and let $f\colon X\to X$  be a holomorphic self-map. A {\sl pre-model} is a triple $(Q,t,\vartheta)$ such that $Q$ is a complex manifold, $t\colon Q\to X$ is a holomorphic mapping and $\vartheta\colon Q\to Q$ is an automorphism such that the following diagram commutes:

\[\xymatrix{Q\ar[r]^{\vartheta}\ar[d]_{t}& Q\ar[d]^{t}\\
X\ar[r]^{f}& X.}\]
A pre-model $(Q,t,\vartheta)$ is called   {\sl injective} if $t\colon Q\to X$ is injective.

Let $(Z,g,\tau)$ and  $(Q,t,\vartheta)$ be two pre-models for $f$. A  {\sl morphism of pre-models} $\hat\eta\colon  (Q,t,\vartheta) \to (Z,g,\tau) $ is given by a holomorphic mapping  $\eta \colon Q\to Z$ such that the following diagram commutes:
\SelectTips{xy}{12}
\[ \xymatrix{Q \ar[rrr]^t\ar[rd]^\eta\ar[dd]^\vartheta &&& X \ar[dd]^f\\
& Z \ar[rru]^g \ar[dd]^(.25)\tau\\
Q\ar'[r][rrr]^(.25)t \ar[rd]^\eta &&& X\\
& Z \ar[rru]^g.}
\]
If the mapping $\eta \colon Q\to Z$ is a biholomorphism, then we say that $\hat\eta\colon  (Q,t,\vartheta) \to (Z,g,\tau) $ is an {\sl isomorphism of pre-models}. Notice that then $\eta^{-1}\colon Z\to Q$ induces a morphism $ {\hat\eta}^{-1}\colon  (Z,g,\tau)\to  (Q,t,\vartheta). $

\end{definition}
\begin{lemma}\label{belle}
Let $(Z,g,\tau)$ be an injective  pre-model for $f$ and  let $(Q,t,\vartheta)$ be a  pre-model for $f$. Then
 there exists a morphism   $\hat\eta\colon  (Q,t,\vartheta) \to (Z,g,\tau)  $ if and only if  $g\colon Z\to X$ extends $t\colon Q\to X$ through the mapping $\eta\colon Q\to Z$.
\end{lemma}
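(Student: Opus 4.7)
The forward direction is essentially a tautology: by definition, a morphism of pre-models $\hat\eta\colon(Q,t,\vartheta)\to(Z,g,\tau)$ is a holomorphic map $\eta\colon Q\to Z$ that in particular makes the front face of the cubical diagram commute, i.e.\ $g\circ\eta=t$, which is precisely the statement that $g$ extends $t$ through $\eta$.

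So the content of the lemma is the reverse direction, and the key input is the injectivity of $g$. Assume that $\eta\colon Q\to Z$ is holomorphic with $g\circ\eta=t$. To upgrade $\eta$ to a morphism of pre-models, I need to verify the ``side face'' commutativity
\[
\eta\circ\vartheta=\tau\circ\eta.
\]
The plan is to check that both sides agree after composing with $g$, and then cancel $g$ on the left using injectivity. Explicitly, using the intertwining relations $t\circ\vartheta=f\circ t$ and $g\circ\tau=f\circ g$ coming from the fact that $(Q,t,\vartheta)$ and $(Z,g,\tau)$ are pre-models, together with the extension identity $g\circ\eta=t$, I compute
\[
g\circ(\eta\circ\vartheta)=t\circ\vartheta=f\circ t=f\circ g\circ\eta=g\circ\tau\circ\eta=g\circ(\tau\circ\eta).
\]
Since $g$ is injective, this forces $\eta\circ\vartheta=\tau\circ\eta$, so $\hat\eta$ is indeed a morphism of pre-models.

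There is no real obstacle here: the argument is a diagram chase whose only nontrivial ingredient is the injectivity of $g$, used as a left-cancellation property. I would write the proof in just a few lines, presenting both implications as above.
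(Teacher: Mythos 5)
Your proof is correct and follows exactly the paper's argument: the nontrivial direction is handled by the same chain of equalities $g\circ\eta\circ\vartheta=t\circ\vartheta=f\circ t=f\circ g\circ\eta=g\circ\tau\circ\eta$, followed by left-cancellation of the injective map $g$. Nothing is missing.
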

\begin{proof}
One direction is trivial. For the other, assume that $g\colon Z\to X$ extends $t\colon Q\to X$ through the mapping $\eta\colon Q\to Z$.
Then $$g\circ \tau\circ \eta=f\circ g\circ \eta=f\circ t=t\circ \vartheta=g\circ \eta\circ \vartheta.$$ Since $g$ is injective, it follows that $ \tau\circ \eta=\eta\circ \vartheta.$

\end{proof}
\begin{corollary}
Let $(Z,g,\tau)$ and $(Q,t,\vartheta)$ be  two injective pre-models for $f$. Then $ (Q,t,\vartheta)$ and $ (Z,g,\tau)  $ are isomorphic if and only if  $g$ and $t$ are equivalent. 
\end{corollary}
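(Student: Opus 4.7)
The plan is to deduce this essentially directly from Lemma \ref{belle}, which already identifies morphisms of pre-models with extensions of the intertwining maps, together with the definition of equivalence of holomorphic mappings.

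For the forward implication, I will assume that $(Q,t,\vartheta)$ and $(Z,g,\tau)$ are isomorphic. By definition, this means there is a morphism of pre-models $\hat\eta\colon (Q,t,\vartheta)\to(Z,g,\tau)$ whose underlying holomorphic map $\eta\colon Q\to Z$ is a biholomorphism. Applying Lemma \ref{belle}, we obtain that $g\circ\eta=t$, so $g$ extends $t$ through the biholomorphism $\eta$. This is precisely the definition of $g$ and $t$ being equivalent.

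For the reverse implication, I will assume $g$ and $t$ are equivalent. By definition there exists a biholomorphism $\eta\colon Q\to Z$ such that $g\circ\eta=t$, so in particular $g$ extends $t$ through $\eta$. Since both $g$ and $t$ are injective by hypothesis, Lemma \ref{belle} applies and yields a morphism of pre-models $\hat\eta\colon (Q,t,\vartheta)\to(Z,g,\tau)$ with underlying map $\eta$. Because $\eta$ is a biholomorphism, $\hat\eta$ is by definition an isomorphism of pre-models, and the corollary follows.

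There is no real obstacle here: the entire content has been absorbed into Lemma \ref{belle}, and the role of this corollary is to record the clean restatement at the level of isomorphism classes, which will be used implicitly when invoking the ``essentially unique'' pre-model in the main theorems.
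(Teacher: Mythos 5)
Your proof is correct and follows essentially the same route as the paper: the nontrivial direction is exactly the paper's appeal to Lemma \ref{belle} applied to the biholomorphism $\eta$ witnessing equivalence, and the forward direction is immediate from the definitions (the paper simply calls it trivial). No issues.
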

\begin{proof}
 One direction is trivial. For the other, assume that $g$ and $t$ are equivalent.  Thus $g\colon Z\to X$ extends $t\colon Q\to X$ through the biholmorphism $\eta\colon Q\to Z$.   By Lemma \ref{belle}, it induces a morphism   $\hat\eta\colon  (Q,t,\vartheta) \to (Z,g,\tau)  $ which is thus an isomorphism.
\end{proof}
\begin{remark}
If $(Z,g,\tau)$ is an injective pre-model for $f$, then $Z$ is Kobayashi hyperbolic.

\end{remark}

\begin{definition}
Let $X$ be a complex manifold and let $f\colon X\to X$  be a holomorphic self-map.
Let $(x_n)$ be a backward orbit for $f$.  For  $m\geq 1$ we denote by
\begin{equation}\label{pigna}
\sigma_m(x_n)\coloneqq  \lim_{n\to\infty}k_X(f^{-n-m}(x),f^{-n}(x))\in \R^+\cup\{+\infty\}
\end{equation}
and we call it the {\sl backward $m$-step} of $(x_n)$. 
A backward orbit $(x_n)$ has {\sl bounded step} if $\sigma_1(x_n)<\infty$. If $f$ is univalent,
 then $(x_n)$ is the unique backward orbit starting at $x_0$, and thus we simply write  $\sigma_m(x_0)$ instead of $\sigma_m(x_n)$.
\end{definition}
Notice that the limit in (\ref{pigna}) exists since the sequence $$(k_X(f^{-n-m}(x),f^{-n}(x)))_{n\geq 0}$$ is monotonically increasing.

\begin{definition}
Let $f\colon X\to X$ be a holomorphic self-map. We define the {\sl stable subset} $\mathcal S(f)$
as the subset consisting of all $x\in X$ such that there exists a backward orbit with bounded step starting at $x$. 
\end{definition}
Clearly $\mathcal S(f)$ coincides with the union of all backward orbits in $X$ with bounded  step.

\begin{remark}
Let $f\colon X\to X$ be a holomorphic self-map and let $(Q,t,\vartheta)$ be a pre-model for $f$. Then
the image $t(Q)$  is  contained in the stable subset $\mathcal{S}(f)$.
\end{remark}

From now on  assume that $X$ is a Kobayashi hyperbolic cocompact complex manifold, and that $f\colon X\to X$ is a  univalent self-map. The invariant subset $\Lambda$ of $f$ is the decreasing intersection of the domains $(f^n(X))$. Each domain is biholomorphic to $X$ since 
$f^n$ is univalent.   We can thus apply the results of Section \ref{intersection} to the case $X_n\coloneqq f^n(X)$ and $f_n\coloneqq f^n$. On $\Lambda$ we consider the   equivalence relation $\sim$ introduced in Definition \ref{catwoman}:
$x,y\in \Lambda$ are equivalent if and only if  $(k_{X}(f^{-n}(x),f^{-n}(y)))$ is a bounded sequence.

\begin{remark}\label{principegiovanni}
The equivalence relation $\sim$ on the invariant subset $\Lambda$ is preserved by $f$ in the following sense: $x\sim y$ if and only if $f(x)\sim f(y)$.
\end{remark}

This means that $f$ induces a bijection (still denoted by $f$) on the family of canonical submanifolds $\Lambda/\sim$.
\begin{definition}
A canonical submanifold $\Sigma$ is {\sl invariant} if $f(\Sigma)=\Sigma$.
If $q\geq 2$, a canonical submanifold $\Sigma$ is {\sl $q$-periodic} if $f^k(\Sigma)=\Sigma$ for $k\in q\Z$.
A canonical submanifold $\Sigma$ is {\sl wandering} if $f^k(\Sigma)\neq \Sigma$ for all $k\in \Z$, $k\neq 0$.
\end{definition}

\begin{example}
We construct a  self-map of a domain biholomorphic to the unit disc $\D$  which admits one-dimensional  wandering canonical submanifolds. Consider the domain $A\subset \C$ defined by 
$$A\coloneqq\{0<\Im z<e^{\Re z}\}\cup (\C\smallsetminus\{\Re z \in \Z\}).$$ The domain $A$ is simply connected and thus by the uniformization theorem it is biholomorphic to the unit disc. Moreover it is invariant by the mapping $f(z)=z+1$, which thus defines a univalent mapping on $A$. We have $\Lambda= \{\Im z=0\}\cup  (\C\smallsetminus\{\Re z \in \Z\}),$ and for all $n\in \Z$ the  vertical stripe $A_n\coloneqq\{n<\Re z<n+1\}$ is a canonical submanifold. Clearly each $A_n$ is  wandering.
\end{example}

We can detect whether a canonical submanifold  $\Sigma$ is invariant, periodic or wandering just by looking at the backward steps $(\sigma_m(x))$ of  any point $x\in \Sigma$.
\begin{lemma}\label{slayer}
Let $\Sigma$ be a canonical submanifold and let $x\in\Sigma$. Then
\begin{enumerate}
\item  $\Sigma$ is invariant if and only if $\sigma_1(x)<\infty$, 
\item $\Sigma$ is $q$-periodic if and only if $\sigma_k(x)<\infty$ for all $k\in q\Z$,
\item $\Sigma$ is wandering if and only if $\sigma_k(x)=\infty$ for all $k\in \Z$, $k\neq 0$.

\end{enumerate}
\end{lemma}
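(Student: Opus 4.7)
The plan is to reduce all three statements to a single uniform claim: for every integer $k \geq 1$ and every $x \in \Sigma$, one has $f^k(\Sigma) = \Sigma$ if and only if $\sigma_k(x) < \infty$.

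First I would invoke Remark \ref{principegiovanni}: since the equivalence relation $\sim$ is preserved by $f$, the induced action of $f$ on $\Lambda/{\sim}$ is a bijection, and $f^k(\Sigma) = \Sigma$ is equivalent to $f^k(x) \sim x$ for some (hence every) $x \in \Sigma$. By Definition \ref{catwoman}, this in turn is equivalent to the boundedness of the sequence $(k_X(f^{-n}(f^k(x)), f^{-n}(x)))_{n \geq 0}$. The key observation is that for $n \geq k$ one has $f^{-n}(f^k(x)) = f^{-(n-k)}(x)$, so after discarding the initial $k$ terms (each of which is a finite value of $k_X$ on the connected Kobayashi hyperbolic manifold $X$), the tail coincides, after the shift $m = n-k$, with the monotonically increasing sequence $(k_X(f^{-m-k}(x), f^{-m}(x)))_{m \geq 0}$ whose limit is precisely $\sigma_k(x)$. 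Hence boundedness is equivalent to $\sigma_k(x) < \infty$.

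Granting this reduction, (1) is the case $k = 1$. For (2) I would argue that $f^q(\Sigma) = \Sigma$ is equivalent to $\sigma_q(x) < \infty$ by the reduction; and $f^q(\Sigma) = \Sigma$ automatically forces $f^{kq}(\Sigma) = \Sigma$ for every $k \in \Z$, by iterating $f$ and its inverse on canonical submanifolds (which is possible thanks to Lemma \ref{gaston}). Conversely, the subadditivity $\sigma_{m_1+m_2}(x) \leq \sigma_{m_1}(x) + \sigma_{m_2}(x)$, obtained by the triangle inequality inside the limits defining the backward steps, yields $\sigma_{kq}(x) \leq k\,\sigma_q(x) < \infty$ for every positive $k$, which takes care of the remaining indices in $q\Z$. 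Finally, (3) is immediate from (1) and (2): $\Sigma$ is wandering precisely when it is neither invariant nor $q$-periodic for any $q \geq 2$, i.e.\ when $\sigma_k(x) = \infty$ for every $k \geq 1$.

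I do not expect a substantive obstacle; the whole lemma is an unfolding of the definition of $\sim$ together with the recognition that the sequence testing $f^k(x) \sim x$ is, up to a finite head, the sequence defining $\sigma_k(x)$. The only mild point of care concerns the initial $k$ terms of the test sequence, which involve the forward iterates $f^{k-n}(x)$; these are at finite Kobayashi distance from $f^{-n}(x)$ in $X$ and so do not affect boundedness of the tail.
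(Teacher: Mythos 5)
Your proposal is correct and follows essentially the same route as the paper, whose entire proof is the observation that $\sigma_k(x)<\infty$ if and only if $x\sim f^{-k}(x)$ if and only if $x\sim f^{k}(x)$; your reduction to the uniform claim ``$f^k(\Sigma)=\Sigma$ iff $\sigma_k(x)<\infty$'' is exactly this, spelled out with the shift-of-index and finite-head details that the paper leaves implicit.
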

\begin{proof}
If $x\in \Lambda$, we have that $\sigma_k(x)<\infty$ if and only if $x\sim f^{-k}(x)$ if and only if $x\sim f^{k}(x)$.
\end{proof}

\begin{remark}
The stable subset $\mathcal S(f)$ is the disjoint union of all invariant canonical submanifolds,
and the image $t(Q)$ of the intertwining mapping of a pre-model $(Q,t,\vartheta)$  is contained in an invariant canonical submanifold.

\end{remark}

Let $\Sigma$ be an invariant canonical submanifold. Then by Lemma \ref{gaston} and  Remark \ref{principegiovanni}, we have that $f|_\Sigma\colon \Sigma\to\Sigma$ is bijective. If $\Sigma$ is embedded, this implies that $f|_\Sigma\colon \Sigma\to\Sigma$ is an automorphism. Surprisingly enough, even if $\Sigma$ is not embedded we  obtain a similar result using   Proposition \ref{mrpink}.

\begin{proposition}\label{dumbo}
Let $\Sigma$ be an invariant canonical submanifold, and let $g\colon Z\to X$ be a holomorphic injective mapping such that $g(Z)=\Sigma$. 
Then there exists an automorphism $\tau$ of $Z$ such that the following diagram commutes:
$$\xymatrix{Z\ar[r]^{\tau}\ar[d]_{g}& Z\ar[d]^{g}\\
X\ar[r]^{f}& X.}$$
The  injective pre-model $(Z,g,\tau)$ satisfies the following properties:
\begin{enumerate} 
\item if $(Q,t,\vartheta)$ is another pre-model such that $ t(Q)\cap \Sigma\neq \varnothing$, then there exists a morphism  $\hat\eta\colon (Q,t,\vartheta) \to  (Z,g,\tau)$;
\item  if $(Q,t,\vartheta)$ is another injective pre-model such that $ t(Q)=\Sigma$, then there exists an isomorphism  $\hat\eta\colon (Q,t,\vartheta) \to  (Z,g,\tau)$.

\end{enumerate}

\end{proposition}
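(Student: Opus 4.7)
The plan is to first construct $\tau$ by applying Proposition \ref{mrpink} to the map $f\circ g$, then to deduce that $\tau$ is a biholomorphism by running the same argument in the opposite direction with $f^{-1}\circ g$, and finally to derive the two universal properties from Proposition \ref{mrpink} combined with Lemma \ref{belle}.

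By Corollary \ref{aristogatti}, any injective holomorphic mapping $g\colon Z\to X$ with $g(Z)=\Sigma$ is equivalent to the canonical one produced in Section \ref{intersection}, and since the statement is invariant under such equivalences I may assume $g$ is the canonical mapping, so that Proposition \ref{mrpink} applies verbatim. Set $t\coloneqq f\circ g\colon Z\to X$. The invariance of $\Sigma$ gives $t(Z)=f(\Sigma)=\Sigma$, in particular $t(Z)\cap\Sigma\neq\varnothing$ and $t(Z)\subset\Lambda$. Proposition \ref{mrpink} therefore yields a holomorphic map $\tau\colon Z\to Z$ with $g\circ\tau=f\circ g$, which is the desired commuting diagram.

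Next I show that $\tau$ is a biholomorphism. Injectivity is immediate: if $\tau(z_1)=\tau(z_2)$, then $f(g(z_1))=f(g(z_2))$, so by injectivity of $f$ and $g$, $z_1=z_2$. For surjectivity, observe that $\Sigma\subset\Lambda\subset f(X)$, and that the injectivity of $f$ together with $f(\Sigma)=\Sigma$ gives $f^{-1}(\Sigma)=\Sigma$. Hence $f^{-1}\circ g\colon Z\to X$ is a well-defined holomorphic map whose image is $\Sigma$, so Proposition \ref{mrpink} supplies $\tau'\colon Z\to Z$ with $g\circ\tau'=f^{-1}\circ g$. Then
\[
g\circ\tau\circ\tau' \;=\; f\circ g\circ\tau' \;=\; f\circ f^{-1}\circ g \;=\; g,
\]
and injectivity of $g$ forces $\tau\circ\tau'=\id_Z$; symmetrically $\tau'\circ\tau=\id_Z$, so $\tau$ is an automorphism with inverse $\tau'$.

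For the universal property (1), let $(Q,t,\vartheta)$ be a pre-model with $t(Q)\cap\Sigma\neq\varnothing$. The relation $f\circ t=t\circ\vartheta$, iterated, gives $f^n\circ t=t\circ\vartheta^n$, so $t(Q)=t(\vartheta^n(Q))=f^n(t(Q))\subset f^n(X)$ for every $n\geq 0$, and hence $t(Q)\subset\Lambda$. Proposition \ref{mrpink} yields $\eta\colon Q\to Z$ with $g\circ\eta=t$, and since $(Z,g,\tau)$ is injective, Lemma \ref{belle} promotes $\eta$ to a morphism $\hat\eta\colon(Q,t,\vartheta)\to(Z,g,\tau)$. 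For (2), if moreover $(Q,t,\vartheta)$ is injective with $t(Q)=\Sigma=g(Z)$, then $g(Z)\subset t(Q)$, so Lemma \ref{mulan} forces $\eta$ to be a biholomorphism, whence $\hat\eta$ is an isomorphism of pre-models. The main obstacle I expect is the surjectivity of $\tau$: the identity $g\circ\tau=f\circ g$ alone only produces a holomorphic endomorphism of $Z$, and it is the two-sided invariance $f^{\pm 1}(\Sigma)=\Sigma$---resting on both the univalence of $f$ and the invariance of the canonical submanifold $\Sigma$---that allows us to run Proposition \ref{mrpink} in the reverse direction and extract a holomorphic inverse.
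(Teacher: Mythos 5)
Your proof is correct and follows essentially the same route as the paper: apply Proposition \ref{mrpink} to $f\circ g$ to produce $\tau$, and then combine Proposition \ref{mrpink}, Lemma \ref{belle}, and Corollary \ref{aristogatti}/Lemma \ref{mulan} for the two universal properties. Your explicit construction of the holomorphic inverse $\tau'$ from $f^{-1}\circ g$ (and the preliminary reduction to the canonical $g$) merely fills in details that the paper compresses into the assertion that $\tau$ is bijective and hence an automorphism.
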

\begin{proof}
Consider the holomorphic mapping $f\circ g\colon Z\to X$. Since $\Sigma$ is invariant, we have $f(g(Z))\subset \Sigma$. By Proposition \ref{mrpink}, there exists a holomorphic mapping $\tau\colon Z\to Z$ such that $$ f\circ g  =g\circ \tau.$$ The mapping $\tau$ is bijective and is hence an automorphism.  Thus  $(Z,g,\tau)$  is an injective pre-model for $f$.
Let $(Q,t,\vartheta)$ be another pre-model such that $ t(Q)\cap \Sigma\neq \varnothing$.
By Proposition \ref{mrpink} we have that $g$ extends $t$, and by Lemma \ref{belle} 
there exists   a morphism of  pre-models  $\hat\eta\colon (Q,t,\vartheta) \to  (Z,g,\tau)$. 
Let $(Q,t,\vartheta)$ be another injective pre-model such that $ t(Q)=\Sigma$. By Corollary \ref{aristogatti} we have that $g$ and $t$ are equivalent, and by Lemma \ref{belle} 
there exists   an  isomorphism of  pre-models  $\hat\eta\colon (Q,t,\vartheta) \to  (Z,g,\tau)$.
\end{proof}

\begin{definition}
We will call any injective pre-model $(Z,g,\tau)$ satisfying $g(Z)=\Sigma$ a {\sl  canonical  pre-model} associated with $\Sigma$.
\end{definition}

The backward step has a straightforward interpretation in terms of any canonical  pre-model associated with $\Sigma$, as the next result shows.
\begin{proposition}\label{few}
Let $\Sigma\subset \Lambda$ be an invariant canonical submanifold. Let $(Z,g,\tau)$ be a canonical  pre-model  associated with $\Sigma$. Let $x\in \Sigma$ and  let $z\in Z$ be such that $g(z)=x$. Then for all $m\geq 0$,  $$\sigma_m(x)=k_Z(z,\tau^{-m}(z))=k_Z(z,\tau^m(z)).$$
\end{proposition}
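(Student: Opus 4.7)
The plan is to reduce the computation of $\sigma_m(x)$ to a distance inside $\Sigma$ (or equivalently inside $Z$ via the intertwining map $g$), and then invoke Proposition \ref{mryellow} to identify the limit of Kobayashi distances on $X_n=f^n(X)$ with the Kobayashi distance $k_\Sigma$.

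First, I would unwind the intertwining relation. Iterating $f\circ g=g\circ\tau$ gives $f^n\circ g=g\circ\tau^n$ for every $n\geq0$, and since $\tau$ is an automorphism while $f|_\Sigma\colon\Sigma\to\Sigma$ is bijective (Proposition \ref{dumbo}), this also yields $f^{-n}(g(z))=g(\tau^{-n}(z))$ for every $z\in Z$, $n\geq 0$. In particular, for the chosen $z$ with $g(z)=x$, we have $f^{-n}(x)=g(\tau^{-n}(z))$ for all $n\geq 0$.

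Next, I would translate the defining expression for $\sigma_m(x)$ into a Kobayashi distance computed on the shrinking domains $X_n=f^n(X)$. Since $f_n\coloneqq f^n\colon X\to X_n$ is a biholomorphism, the isometry property of the Kobayashi distance gives
\[
k_X\bigl(f^{-n-m}(x),f^{-n}(x)\bigr)=k_{X_n}\bigl(f^{-m}(x),x\bigr).
\]
Note that the right-hand side makes sense because $x,f^{-m}(x)\in\Sigma\subset\Lambda\subset X_n$, and in particular both points lie in $\Sigma$, so Proposition \ref{mryellow} applies. Passing to the limit,
\[
\sigma_m(x)=\lim_{n\to\infty}k_{X_n}\bigl(f^{-m}(x),x\bigr)=k_\Sigma\bigl(f^{-m}(x),x\bigr).
\]

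Finally, I would translate from $k_\Sigma$ to $k_Z$ via the canonical pre-model. By the very definition of $k_\Sigma$ and the identity $f^{-m}(x)=g(\tau^{-m}(z))$ together with $g(z)=x$, we obtain
\[
k_\Sigma\bigl(f^{-m}(x),x\bigr)=k_Z\bigl(\tau^{-m}(z),z\bigr)=k_Z\bigl(z,\tau^{-m}(z)\bigr),
\]
using symmetry of the Kobayashi distance. For the remaining equality $k_Z(z,\tau^{-m}(z))=k_Z(z,\tau^m(z))$, I would use that $\tau$ is an automorphism of $Z$ and therefore an isometry for $k_Z$: applying $\tau^m$ to both arguments gives $k_Z(z,\tau^{-m}(z))=k_Z(\tau^m(z),z)=k_Z(z,\tau^m(z))$.

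There is no serious obstacle here; the only points requiring minor attention are the well-definedness of $f^{-n}(x)$ (guaranteed by $x\in\Lambda$ and the injectivity of $f$, via Lemma \ref{gaston}) and the invocation of Proposition \ref{mryellow}, which needs both arguments to lie in $\Sigma$—this is exactly why invariance of $\Sigma$ is crucial so that $f^{-m}(x)\in\Sigma$.
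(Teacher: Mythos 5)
Your proof is correct and follows essentially the same route as the paper: rewrite the step via the biholomorphism $f^n\colon X\to X_n$ as $k_{X_n}(x,f^{-m}(x))$, pass to the limit with Proposition \ref{mryellow}, identify $k_\Sigma$ with $k_Z$ through $g$, and use that $\tau$ is a $k_Z$-isometry for the final equality. The extra care you take with $f^{-m}(x)\in\Sigma$ and the identity $f^{-n}\circ g=g\circ\tau^{-n}$ only makes explicit what the paper leaves implicit.
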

\begin{proof}
We have that $f^{-m}(x)\in \Sigma$. By Proposition \ref{mryellow},
$$k_X(f^{-n}(x),f^{-n-m}(x))=k_{f^n(X)}(x, f^{-m}(x))\to k_\Sigma(x, f^{-m}(x))\coloneqq k_Z(x, \tau^{-m}(z)).$$
Finally, $k_Z(z,\tau^{-m}(z))=k_Z(z,\tau^m(z))$ since $\tau$ is an automorphism.
\end{proof}
\begin{definition}
Let $Y$ be a  complex manifold and let $h\colon Y\to Y$ be a holomorphic self-map.  Let $y\in Y$. The {\sl divergence rate} is introduced in \cite[Definition 2.5]{AB} as $$c(h)\coloneqq \lim_{m\to\infty} \frac{k_Y(y, h^m(y))}{m}= \inf_{m\in\N} \frac{k_Y(y, h^m(y))}{m},$$  and  does not depend on $y\in Y$.
\end{definition}

Proposition \ref{few} immediately yields the following corollary.
\begin{corollary}\label{poi}
Let $\Sigma\subset \Lambda$ be an invariant canonical submanifold.  Let $(Z,g,\tau)$ be a canonical  pre-model associated with $\Sigma$.  Let $x\in \Sigma$.  Then
$$c(\tau)=\lim_{m\to \infty} \frac{\sigma_m(x)}{m}=\inf_{m\in \N} \frac{\sigma_m(x)}{m}.$$
\end{corollary}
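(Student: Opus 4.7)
The plan is a direct substitution combining Proposition \ref{few} with the definition of the divergence rate. Given $x \in \Sigma$, the injectivity of $g\colon Z \to X$ produces a unique $z \in Z$ with $g(z) = x$. Proposition \ref{few} then identifies, for every $m \geq 1$, the backward step of $f$ at $x$ with the forward $\tau$-orbit distance in $Z$:
$$\sigma_m(x) \;=\; k_Z(z, \tau^m(z)).$$
So the corollary is really an assertion about the automorphism $\tau$ of the Kobayashi hyperbolic manifold $Z$, not about $f$ at all.

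With this identification, dividing by $m$ gives $\frac{\sigma_m(x)}{m} = \frac{k_Z(z, \tau^m(z))}{m}$, and both equalities of the corollary are read off from the definition of $c(\tau)$ quoted just above the statement:
$$c(\tau) \;=\; \lim_{m\to\infty} \frac{k_Z(z, \tau^m(z))}{m} \;=\; \inf_{m\in\N}\frac{k_Z(z, \tau^m(z))}{m},$$
which is valid for any base point $z \in Z$.

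There is no genuine obstacle: all the analytic content has already been absorbed into Proposition \ref{few} (via Proposition \ref{mryellow} and the fact that $Z$ is a holomorphic retract of $X$, so $k_Z = k_\Sigma \circ (g \times g)$). The only point worth a brief sanity check is that the right-hand side is independent of the choice of $x \in \Sigma$; this is forced by the well-known base-point independence of $c(\tau)$, and is also consistent with the equivalence relation $\sim$, since for any two $x, y \in \Sigma$ the sequence $k_X(f^{-n}(x), f^{-n}(y))$ is bounded and therefore contributes nothing after dividing by $m$ and letting $m \to \infty$.
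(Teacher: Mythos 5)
Your proposal is correct and follows exactly the paper's route: the corollary is stated there as an immediate consequence of Proposition \ref{few} combined with the definition of the divergence rate, which is precisely the substitution you carry out.
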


The proof of Theorem \ref{mainintro} follows easily from the results of this section.

 We now want  to describe the dynamics on an invariant canonical submanifold.
We first need   to recall some definitions.

\begin{definition}\label{pinguino}
Let $Y$ a taut complex manifold.
We say that  the {\sl type} of a holomorphic self-map  $h\colon Y\to Y$ is
\begin{enumerate}
\item  {\sl elliptic} if the sequence $(h^n)$  is not compactly divergent (and hence $c(h)=0$),
\item  {\sl parabolic} if the sequence $(h^n)$  is compactly divergent and $c(h)=0$,
\item {\sl hyperbolic} if the sequence $(h^n)$  is compactly divergent and $c(h)>0$.
\end{enumerate}
The {\sl type}  of a  pre-model $(Q,t,\vartheta)$ is defined as the type of $\vartheta$. The {\sl type}  of an  invariant canonical submanifold $\Sigma$ is defined as the type of a canonical pre-model associated with $\Sigma$.
\end{definition}

We will need the following result proved in \cite[Theorem 1.1]{A2}.
\begin{theorem}\label{bambi}
Let $Y$ be taut, and let $h\colon Y\to Y$ be a holomorphic self-map. Then the following are equivalent:
\begin{enumerate}
\item the sequence $(h^n)$ is not compactly divergent,
\item the subset $\{h^n(y)\}$ is relatively compact in $Y$ for all $y\in Y$,
\item there exists $y\in Y$ such that the subset $\{h^n(y)\}$ is relatively compact in $Y$.
\end{enumerate}
\end{theorem}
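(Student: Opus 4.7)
The implications $(2)\Rightarrow(3)$ is immediate, and $(3)\Rightarrow(1)$ follows by choosing the compact sets $K=\{y_0\}$ and $L=\overline{\{h^n(y_0)\}}$ in the definition of compact divergence: one has $h^n(K)\cap L\ni h^n(y_0)$ for every $n$, so $(h^n)$ is not compactly divergent. The substantive implication is therefore $(1)\Rightarrow(2)$, and my plan is the following.

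Tautness of $Y$ means, by definition, that every sequence in $\Hol(Y,Y)$ has a sub-subsequence that either converges locally uniformly or is compactly divergent. I want to upgrade the hypothesis $(1)$ --- that $(h^n)$ is not compactly divergent, so some subsequence $h^{n_k}\to\phi$ converges --- to the stronger statement that no subsequence whatsoever of $(h^n)$ is compactly divergent. Once the upgrade is in hand, the closure $\Gamma:=\overline{\{h^n:n\geq 0\}}\subset\Hol(Y,Y)$ is sequentially compact in the compact-open topology, and for any $y\in Y$ the orbit $\{h^n(y)\}$ sits inside the compact image $\mathrm{ev}_y(\Gamma)$ of the continuous evaluation map, hence is relatively compact, which is $(2)$.

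For the upgrade, suppose for contradiction that some subsequence $(h^{m_k})$ is compactly divergent. Passing to further subsequences assume $m_k>n_k$ for every $k$. The semigroup identity $h^{m_k}=h^{m_k-n_k}\circ h^{n_k}$, combined with the fact that $h^{n_k}(y)\to\phi(y)$ stays in a compact set, shows that $(h^{m_k-n_k})$ must also be compactly divergent: any convergent sub-subsequence $h^{m_{k_j}-n_{k_j}}\to\tau$ would force $h^{m_{k_j}}(y)\to\tau(\phi(y))\in Y$, contradicting compact divergence of $(h^{m_k})$. If the differences $m_k-n_k$ were bounded by some $M$, the sequence $(h^{m_k-n_k})$ would lie in the finite set $\{h^0,\ldots,h^M\}$, which is clearly not compactly divergent; hence $m_k-n_k\to\infty$ along a subsequence, and the argument can be iterated on the new compactly divergent sequence.

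The main obstacle lies in carrying out this Euclidean-style descent to an outright contradiction, because the ``divisor'' $n_k$ itself varies with $k$ rather than being a fixed integer, so the iteration is not quite a standard division algorithm and a careful double-subsequence extraction is needed at each stage. A cleaner alternative --- which I would pursue if the direct descent becomes unwieldy --- is to equip $\Gamma$ with its composition semigroup structure and exploit the fact that, under tautness, $\Gamma$ inherits enough compactness to apply the structure theory of compact Abelian semitopological semigroups: the minimal ideal of $\Gamma$ is a compact topological group, which rules out compactly divergent subsequences structurally and establishes the sought dichotomy, completing the proof of $(1)\Rightarrow(2)$.
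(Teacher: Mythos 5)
A preliminary remark on the comparison: the paper does not prove this statement at all --- it is quoted verbatim from Abate \cite[Theorem 1.1]{A2} --- so there is no internal proof to measure you against, and I judge your argument on its own merits.

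Your implications $(2)\Rightarrow(3)\Rightarrow(1)$ are correct, and so is the reduction of $(1)\Rightarrow(2)$ to the claim that \emph{no} subsequence of $(h^n)$ is compactly divergent, as well as the first step of your descent: if $h^{n_k}\to\phi$ and $(h^{m_k})$ is compactly divergent with $m_k>n_k$, then $(h^{m_k-n_k})$ is again compactly divergent with unbounded exponents. But the argument stops there, and the obstacle you flag yourself is a genuine gap, not a technicality: the descent has no decreasing quantity. The new exponents $m_k-n_k$ form another unbounded sequence with no controlled relation to the $n_k$ (they need not be smaller than, larger than, or syndetically placed with respect to them), so ``iterating'' merely reproduces the original situation --- a compactly divergent sequence of iterates --- and never reaches a bounded, hence contradictory, family. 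The proposed alternative is circular: to know that $\overline{\{h^n\}}$ is a compact semitopological semigroup (so that its minimal ideal is a compact group) you must already know that no subsequence of $(h^n)$ is compactly divergent, which is precisely the claim under proof.

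The ingredient your argument is missing is of a different nature. In \cite{A2} (see also \cite[Theorem 2.1.29]{A} and the results leading to it) the hard implication rests on Ca{\l}ka's theorem for nonexpansive self-maps of locally compact, connected metric spaces: if one orbit of such a map admits a relatively compact subsequence, then \emph{every} orbit is relatively compact. Since $Y$ is taut it is hyperbolic, $k_Y$ induces the manifold topology, and $h$ is $k_Y$-nonexpansive; tautness extracts from $(1)$ a locally uniformly convergent subsequence $h^{n_k}\to\phi$, so every orbit has the relatively compact subsequence $\{h^{n_k}(y)\}$, and Ca{\l}ka's theorem then yields $(2)$. Without this (or an equivalent structural input such as the construction of the limit retraction) your proof of $(1)\Rightarrow(2)$ does not close.
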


The next results show that it is possible to detect the type of an invariant  canonical submanifold only by  looking at the backward steps at any  $x\in \Sigma$. This, together with Lemma \ref{slayer}, shows that every dynamical information concerning $\Sigma$ is encoded in the sequence $(\sigma_m(x))$.

\begin{proposition}\label{baghera}
Let  $\Sigma$ be an invariant canonical submanifold, and let $x\in \Sigma$. Then
\begin{enumerate}
\item the type of $\Sigma$ is elliptic if and only if the sequence $(\sigma_m(x))$ is bounded,
\item the type of $\Sigma$ is  parabolic if and only if the sequence $(\sigma_m(x))$ is unbounded and $\lim_{m\to \infty} \frac{\sigma_m(x)}{m}=0$,
\item the type of $\Sigma$ is  hyperbolic if and only if  $\lim_{m\to \infty} \frac{\sigma_m(x)}{m}>0$ and in this case
$$ \lim_{m\to \infty} \frac{\sigma_m(x)}{m}=c(\tau).$$
\end{enumerate}
\end{proposition}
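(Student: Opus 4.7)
The plan is to reduce all three statements to properties of the iteration $(\tau^m)$ on the retract $Z$, via the two translation results already established: Proposition \ref{few}, which identifies $\sigma_m(x)$ with $k_Z(z,\tau^m(z))$ for $g(z)=x$, and Corollary \ref{poi}, which identifies $\lim_m \sigma_m(x)/m$ with the divergence rate $c(\tau)$. The type of $\Sigma$ is by definition the type of $\tau$, so the proposition becomes a statement purely about the self-map $\tau$ of the complete hyperbolic manifold $Z$ (complete hyperbolic because it is a holomorphic retract of the complete hyperbolic manifold $X$).

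For part (1), I would argue as follows. By Theorem \ref{bambi} applied to $\tau$, the sequence $(\tau^n)$ is not compactly divergent if and only if the orbit $\{\tau^n(z)\}$ is relatively compact in $Z$. Since $Z$ is complete hyperbolic, the closed balls for $k_Z$ are compact, so $\{\tau^n(z)\}$ is relatively compact if and only if the sequence $(k_Z(z,\tau^n(z)))_{n\geq 0}$ is bounded. By Proposition \ref{few} this is exactly boundedness of $(\sigma_n(x))$, giving the equivalence with ellipticity.

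For part (3), Corollary \ref{poi} gives $c(\tau) = \lim_{m\to\infty}\sigma_m(x)/m$, so the hypothesis $\lim_m\sigma_m(x)/m > 0$ is just $c(\tau) > 0$. I only need to verify that $c(\tau) > 0$ already forces $(\tau^n)$ to be compactly divergent, so that $\tau$ is hyperbolic in the sense of Definition \ref{pinguino}. But if $(\tau^n)$ were not compactly divergent, the argument in (1) would give $(k_Z(z,\tau^n(z)))$ bounded, contradicting $c(\tau)>0$. This takes care of (3); the equality with $c(\tau)$ is Corollary \ref{poi}.

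Part (2) is then obtained by elimination: parabolicity means $(\tau^n)$ is compactly divergent with $c(\tau)=0$. By (1), compact divergence is equivalent to $(\sigma_m(x))$ being unbounded, and by Corollary \ref{poi}, $c(\tau)=0$ is equivalent to $\lim_m\sigma_m(x)/m=0$. I do not anticipate any serious obstacle; the only subtle point is the implicit use of complete hyperbolicity of $Z$ to pass between relative compactness and $k_Z$-boundedness of orbits, and this follows at once from $Z$ being a holomorphic retract of the cocompact (hence complete) hyperbolic manifold $X$.
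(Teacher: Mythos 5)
Your argument is correct and follows essentially the same route as the paper: translate everything to the automorphism $\tau$ on $Z$ via Proposition \ref{few} and Corollary \ref{poi}, use Theorem \ref{bambi} together with complete hyperbolicity of the retract $Z$ for the elliptic case, and obtain the parabolic and hyperbolic cases by combining this with $c(\tau)=\lim_m\sigma_m(x)/m$. Your explicit remark in part (3) that $c(\tau)>0$ forces compact divergence is exactly the step the paper leaves implicit.
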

\begin{proof}
Let     $(Z,g,\tau)$ be   a canonical pre-model associated with $\Sigma$, and let $z\in Z$ be such that $g(z)=x$.

(1) Assume that $\Sigma$ is elliptic.  Then by Theorem \ref{bambi} the subset $\{\tau^m(z)\}$ is contained in a compact subset $K\subset Z$. Hence the sequence $(k_Z(z,\tau^m(z)))=(\sigma_m(x))$ is bounded. Conversely, assume  that the sequence $(\sigma_m(x))=(k_Z(z,\tau^m(z)))$ is bounded by $M>0$. Then for all $m\geq 0$ we have $\tau^m(z)\subset \{w\in Z\colon k_Z(z,w)\leq M\}$ which is compact since $Z$ is complete  hyperbolic.

(2) Assume that $\Sigma$ is parabolic. Then  the sequence $(\tau^m(z))$ eventually leaves all compact subsets. Since $Z$ is complete  hyperbolic,   the sequence $(k_Z(z,\tau^m(z)))=(\sigma_m(x))$ is unbounded. By Corollary \ref{poi},  $\lim_{m\to \infty} \frac{\sigma_m(x)}{m}=0$. Conversely, if $(\sigma_m(x))$ is unbounded then $\Sigma$ cannot be elliptic, and sice $c(\tau)=0$ by Corollary \ref{poi}, we are done.

(3) It follows easily from Corollary \ref{poi}.
\end{proof}

\begin{definition}
Let $m\geq 1$. The forward $m$-step of $f$ at $x$ is defined as 
\begin{equation}\label{pignaforward}
s_m(x)\coloneqq \lim_{n\to\infty} k_X(f^n(x),f^{n+m}(x)).
\end{equation}
\end{definition}
Notice that the limit in (\ref{pignaforward}) exists since the sequence $$(k_X(f^{-n-m}(x),f^{-n}(x)))_{n\geq 0}$$ is monotonically decreasing.

\begin{proposition}\label{aladdin}
Let $\Sigma\subset \Lambda$ be an invariant canonical submanifold. Let $(Z,g,\tau)$ be a canonical pre-model  associated with $\Sigma$. Then $c(\tau)\geq c(f).$
\end{proposition}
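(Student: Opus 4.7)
The plan is to exploit the identification of $\sigma_m(x)$ as a limit of quantities of the form $k_X(y_n,f^m(y_n))$, and then to bound each such quantity below by $m\cdot c(f)$ using the subadditivity property of the sequence $\bigl(k_X(y,f^m(y))\bigr)_m$.

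First I would recall that, since $f\colon X\to X$ is holomorphic and hence $k_X$-decreasing, for any $y\in X$ and any $m,n\geq 0$ one has
\[
k_X(y,f^{m+n}(y))\leq k_X(y,f^m(y))+k_X(f^m(y),f^{m+n}(y))\leq k_X(y,f^m(y))+k_X(y,f^n(y)).
\]
Hence the sequence $\bigl(k_X(y,f^m(y))\bigr)_{m\geq 1}$ is subadditive, and Fekete's lemma yields
\[
c(f)=\lim_{m\to\infty}\frac{k_X(y,f^m(y))}{m}=\inf_{m\in\N}\frac{k_X(y,f^m(y))}{m}.
\]
In particular $c(f)\leq \frac{1}{m}k_X(y,f^m(y))$ for every $y\in X$ and every $m\in\N$.

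Next I would fix $x\in \Sigma$ and set $y_n\coloneqq f^{-n-m}(x)\in\Sigma$ (this makes sense because $\Sigma$ is invariant and $f|_\Sigma$ is bijective, cf.\ Proposition \ref{dumbo}). Then $f^m(y_n)=f^{-n}(x)$, so
\[
k_X\bigl(f^{-n-m}(x),f^{-n}(x)\bigr)=k_X\bigl(y_n,f^m(y_n)\bigr)\geq m\,c(f).
\]
Letting $n\to\infty$ and using the definition of the backward $m$-step, I obtain
\[
\sigma_m(x)=\lim_{n\to\infty} k_X\bigl(f^{-n-m}(x),f^{-n}(x)\bigr)\geq m\,c(f)
\]
for every $m\in\N$. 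Dividing by $m$ and passing to the limit as $m\to\infty$, Corollary \ref{poi} gives
\[
c(\tau)=\lim_{m\to\infty}\frac{\sigma_m(x)}{m}\geq c(f),
\]
which is the desired inequality.

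No step looks genuinely delicate: the only conceptual point is to notice that the monotonically increasing sequence defining $\sigma_m(x)$ is, term by term, of the form $k_X(y_n,f^m(y_n))$ and hence bounded below by $m\,c(f)$ uniformly in $n$ via the Fekete-type estimate. The subadditivity argument and the invocation of Corollary \ref{poi} are the only ingredients needed.
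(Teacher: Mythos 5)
Your proof is correct and follows essentially the same route as the paper's: the paper invokes $c(f)=\lim_{m\to\infty} s_m(x)/m$ together with the inequality $s_m(x)\le\sigma_m(x)$, whereas you use the equivalent $\inf$-characterization $c(f)=\inf_{m\in\N}k_X(y,f^m(y))/m$ applied to the points $y_n=f^{-n-m}(x)$; both arguments rest on the same monotonicity of displacements under iteration and both conclude via Corollary \ref{poi}. (The subadditivity/Fekete step is not even needed, since the paper's definition of the divergence rate already records $c(f)=\inf_{m\in\N}k_X(y,f^m(y))/m$.)
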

\begin{proof}
By \cite[Proposition 2.7]{AB}, the divergence rate satisfies $c(f)=\lim_{n\to\infty}\frac{s_m(x)}{m}.$
The result follows  since $s_m(x)\leq \sigma_m(x)$ for all $m\geq 0$.
\end{proof}
The following corollary easily follows from Proposition \ref{aladdin}.
\begin{corollary}
Let $f\colon X\to X$ be a hyperbolic self-map. If $\Sigma$ is an invariant canonical submanifold, then it is hyperbolic. 
\end{corollary}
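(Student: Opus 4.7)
The plan is to chain together three results that have already been established in this section, so the proof will be quite short. Let $(Z,g,\tau)$ be a canonical pre-model associated with $\Sigma$, whose existence is guaranteed by Proposition \ref{dumbo}. By Proposition \ref{aladdin} we have the inequality $c(\tau) \geq c(f)$, and since $f$ is assumed hyperbolic, $c(f) > 0$, so in particular $c(\tau) > 0$.

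Next, I invoke Corollary \ref{poi}, which gives
\[
\lim_{m\to\infty}\frac{\sigma_m(x)}{m} \;=\; c(\tau) \;>\; 0
\]
for any $x \in \Sigma$. By Proposition \ref{baghera}(3), this strict positivity of the limit is precisely the criterion for $\Sigma$ to be of hyperbolic type, and we are done.

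There is no real obstacle here: the corollary is a direct consequence of Proposition \ref{aladdin} combined with the characterization of hyperbolic type in Proposition \ref{baghera}. The only thing worth double-checking is that one is entitled to speak of the type of $\Sigma$ at all, which requires choosing a canonical pre-model and observing (via Proposition \ref{dumbo}(2)) that the type does not depend on the choice; but this is already implicit in Definition \ref{pinguino}.
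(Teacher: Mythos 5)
Your proof is correct and follows exactly the route the paper intends: the paper simply remarks that the corollary ``easily follows from Proposition \ref{aladdin}'', and the chain $c(\tau)\geq c(f)>0$ together with Corollary \ref{poi} and Proposition \ref{baghera}(3) is precisely that argument spelled out. No issues.
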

 For a proof of the following result, see, e.g.,  \cite[Theorem 2.1.29]{A}.
\begin{theorem}
Let $Y$ be a taut manifold, and let $h\colon Y\to Y$ be a holomorphic self-map such that the sequence $(h^n)$ is not compactly divergent. Then there exists a submanifold $M$ of $Y$ called the {\sl limit manifold} and a holomorphic retraction $\rho\colon Y\to M$ which is a  limit point of $(h^n)$ such that every holomorphic self-map $k\colon Y\to Y$ which is a limit point of $(h^n)$ is of the form $k=\gamma\circ \rho$, where $\gamma$ is an automorphism of $M$. Moreover $h(M)\subset M$ and $h|_M$ is an automorphism of $M$.
\end{theorem}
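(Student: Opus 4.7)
The plan rests on combining normality of the iterate family with a rank-extremization argument: I will extract a limit $\rho$ of $(h^n)$ of minimal generic rank among all limit maps and show it is forced to be a holomorphic retraction onto $M$.

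First, since $Y$ is taut and $(h^n)$ is not compactly divergent, Theorem \ref{bambi} gives that every orbit $(h^n(y))$ is relatively compact in $Y$, whence the family $\{h^n\}$ is normal: every subsequence admits a sub-subsequence converging uniformly on compacta. Let $\Gamma\subset\Hol(Y,Y)$ denote the set of all such limit maps. Holomorphic ranks being bounded by $\dim Y$, choose $\rho\in\Gamma$ of minimal generic rank and write $\rho=\lim_k h^{n_k}$.

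The central construction is a pseudo-inverse $\sigma$. By passing to a subsequence we may assume $m_k:=n_{k+1}-n_k\to\infty$ and $h^{m_k}\to\sigma$ for some $\sigma\in\Gamma$. Continuity of composition in the compact-open topology (applicable here because orbits lie in compacta) applied to the factorizations $h^{n_{k+1}}=h^{n_k}\circ h^{m_k}=h^{m_k}\circ h^{n_k}$ yields in the limit
\[
\rho=\rho\circ\sigma=\sigma\circ\rho.
\]
Set $M:=\rho(Y)$; the identity $\sigma\circ\rho=\rho$ gives $\sigma|_M=\id_M$. Moreover $\rho^2=\rho\circ\rho$ is itself the limit of $h^{2n_k}$ and so lies in $\Gamma$; minimality of $\operatorname{rk}\rho$ forces $\operatorname{rk}(\rho^2)=\operatorname{rk}\rho$, and combining this with $\rho=\rho\circ\sigma$ via a local rank analysis near a maximum-rank point of $\rho$ yields $\rho^2=\rho$. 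Hence $\rho$ is a holomorphic retraction and $M$ is a closed complex submanifold of $Y$ by the cited \cite[Lemma 2.1.28]{A}.

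Finally I would address $h$ itself and the universality. Both $h\circ\rho=\lim h^{n_k+1}$ and $\rho\circ h$ lie in $\Gamma$. Since $\rho|_M=\id_M$, we have $h(M)=\rho(h(M))\subset M$, and $\sigma$ together with a further extraction of $h^{m_k-1}$ produces a holomorphic two-sided inverse for $h|_M$, so $h|_M\in\Aut(M)$. For the universality statement, given any $k=\lim_j h^{p_j}\in\Gamma$, a diagonal extraction applied to $(h^{p_j+m_\ell})$ combined with $\sigma|_M=\id_M$ yields $k=k|_M\circ\rho$; here $k|_M\in\Aut(M)$ because it arises as a limit of iterates of the automorphism $h|_M$ on the taut manifold $M$, and such limits are automorphisms by Theorem \ref{bambi} applied to $h|_M$. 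The main obstacle is the final step of the rank argument, namely promoting the relation $\rho=\rho\circ\sigma$ to the idempotence $\rho^2=\rho$, which requires careful use of the openness of the maximum-rank locus of $\rho$ and the minimality of $\operatorname{rk}\rho$ in $\Gamma$.
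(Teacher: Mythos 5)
First, a point of context: the paper does not prove this statement at all — it is quoted verbatim from Abate's book, and the text explicitly refers the reader to \cite[Theorem 2.1.29]{A} for the proof. So your attempt must be measured against that classical Bedford--Abate argument. Your overall architecture (relative compactness of $(h^n)$ via Theorem \ref{bambi}, the limit set $\Gamma$, the auxiliary map $\sigma$ obtained from gaps $n_{k+1}-n_k\to\infty$ with $\sigma\circ\rho=\rho=\rho\circ\sigma$, and the final bookkeeping for $h|_M$ and for arbitrary limit points $k$) is indeed the standard one.

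However, the step you yourself flag as ``the main obstacle'' is not just unfinished — as formulated it is false, so no amount of local rank analysis will close it. A limit point $\rho$ of minimal generic rank need not be idempotent, and the two facts ${\rm rk}(\rho^2)={\rm rk}(\rho)$ and $\rho=\rho\circ\sigma$ do not imply $\rho^2=\rho$. Take $Y=\D$ and $h(z)=e^{i\theta}z$ with $\theta/2\pi$ irrational: every limit point of $(h^n)$ is a rotation, all of the same (hence minimal) rank $1$, so your construction may hand you $\rho(z)=-z$; then $\rho^2=\id\neq\rho$, while ${\rm rk}(\rho^2)={\rm rk}(\rho)$ and the associated $\sigma$ is the identity, so every hypothesis you invoke is satisfied. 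The classical proof instead shows that $\sigma$ (not $\rho$) is the retraction: one first checks that any two elements of $\Gamma$ differ by left composition with a third, so all elements of $\Gamma$ have the same generic rank $m$; near a point where $\rho$ attains rank $m$, the image $\rho(Y)$ contains an $m$-dimensional submanifold $N$ fixed pointwise by $\sigma$; since ${\rm rk}\,\sigma=m$, the rank theorem forces $\sigma$ to map a whole neighborhood of $N$ into $N$, whence $\sigma^2=\sigma$ there and everywhere by the identity principle. (Equivalently, one may quote the existence of idempotents in the compact abelian semigroup $\Gamma$.) Taking $\rho\coloneqq\sigma$ and $M\coloneqq\sigma(Y)$, your remaining steps do go through, with two small repairs: $h(M)\subset M$ should come from $\rho\circ h=h\circ\rho$ together with $M={\rm Fix}(\rho)$, not from the circular identity $h(M)=\rho(h(M))$; and ``$k|_M\in\Aut(M)$'' needs the closedness of $\Aut(M)$ in $\Hol(M,M)$ for taut $M$ (\cite[Proposition 2.1.24]{A}, which this paper uses elsewhere), not Theorem \ref{bambi}.
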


The next result characterizes the invariant canonical submanifolds of elliptic type.
\begin{proposition}
If $f\colon X\to X$ is elliptic then the limit manifold is an elliptic   invariant canonical submanifold.
 Conversely,  if an  invariant canonical submanifold $\Sigma\subset X$ is elliptic, then $f$ is elliptic and $\Sigma$ is the limit manifold. 

\end{proposition}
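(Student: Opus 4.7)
The plan is to deduce both implications from a single key lemma: if $\Sigma$ is an elliptic invariant canonical submanifold and $f$ is elliptic with limit manifold $M$, then $\Sigma\subset M$.

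To prove this key lemma, let $(Z,g,\tau)$ be a canonical pre-model of $\Sigma$, so $\tau$ is an elliptic automorphism of the taut manifold $Z$. Fix $w\in Z$. Since $\tau$ is a Kobayashi isometry, $k_Z(w,\tau^{-n}(w))=k_Z(\tau^n(w),w)$, which is bounded in $n$ by Theorem~\ref{bambi}. Hence $\{\tau^{-n}(w)\}$ lies in a compact $k_Z$-ball, and up to extraction $\tau^{-n_k}(w)\to w'\in Z$, so $a_k\coloneqq f^{-n_k}(g(w))=g(\tau^{-n_k}(w))\to g(w')$. By tautness of $X$ and non-compact divergence of $(f^n)$, we can extract further so that $f^{n_k}\to h$ uniformly on compact subsets of $X$, where $h=\gamma\circ\rho$ with $\gamma\in\Aut(M)$ and $\rho\colon X\to M$ the retraction onto the limit manifold. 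Uniform convergence of $f^{n_k}$ on a compact set containing $\{a_k\}\cup\{g(w')\}$ combined with continuity of $h$ gives $g(w)=f^{n_k}(a_k)\to h(g(w'))=\gamma(\rho(g(w')))\in M$, so $\Sigma=g(Z)\subset M$.

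For the forward direction, suppose $f$ is elliptic, so $M$ exists. Since $f(M)=M$, iterating gives $M\subset f^n(X)$ for all $n$, hence $M\subset\Lambda$. For $x,y\in M$, $k_X(f^{-n}(x),f^{-n}(y))=k_{f^n(X)}(x,y)\leq k_M(x,y)$ independently of $n$, so $x\sim y$ and $M$ is contained in a single canonical submanifold $\Sigma$. Since $M$ is closed in $X$ as the image of the retraction $\rho$, the forward $f$-orbit of any $x\in M$ is relatively compact in $M$, so $f|_M$ is an elliptic automorphism of $M$, and $k_M(x,f^{-m}(x))$ is bounded in $m$. Using the biholomorphism $f^n\colon X\to f^n(X)$, $\sigma_m(x)=\lim_n k_{f^n(X)}(x,f^{-m}(x))\leq k_M(x,f^{-m}(x))$ is bounded in $m$, so $\Sigma$ is elliptic by Proposition~\ref{baghera}(1). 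The key lemma then gives $\Sigma\subset M$, so $M=\Sigma$ is an elliptic invariant canonical submanifold.

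For the converse, suppose $\Sigma$ is an elliptic invariant canonical submanifold. For any $x=g(z)\in\Sigma$, the forward orbit $f^n(x)=g(\tau^n(z))$ is the continuous image of the relatively compact orbit $\{\tau^n(z)\}$, hence relatively compact in $X$. By Theorem~\ref{bambi}, $f$ is elliptic and the limit manifold $M$ exists, and the key lemma yields $\Sigma\subset M$. For the reverse inclusion, take $p\in M$ and any $x\in\Sigma\subset M$; then $k_X(f^{-n}(p),f^{-n}(x))\leq k_M(p,x)<\infty$ (both points lie in $M$), so $p\sim x$ and thus $p\in\Sigma$. Hence $\Sigma=M$.

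The main technical obstacle is the composition-of-limits step in the key lemma: combining the extraction of a convergent subsequence $(\tau^{-n_k}(w))$ in $Z$ with a further extraction giving $f^{n_k}\to\gamma\circ\rho$ uniformly on compacts of $X$, and correctly identifying $\lim_k f^{n_k}(a_k)$ with $h(g(w'))$ via uniform convergence of $f^{n_k}$ on a single compact set containing the tail of $(a_k)$ and its limit.
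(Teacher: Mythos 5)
Your proof is correct, and the engine driving it is the same as the paper's: the ellipticity of the canonical automorphism $\tau$ forces points of $\Sigma$ to be limit values of a convergent subsequence $(f^{n_k})$, hence to lie in the limit manifold $M$. The packaging, however, is genuinely different. The paper argues by contradiction twice: in the forward direction it shows $(M,\iota,f|_M)$ is an elliptic pre-model, assumes the canonical submanifold $\Sigma$ containing $M$ is strictly larger, transports ellipticity through a morphism of pre-models, and produces a point of $\Sigma\smallsetminus M$ in the image of a limit of iterates; in the converse it says ``arguing as before.'' You instead isolate the direct statement $\Sigma\subset M$ as a key lemma and derive both implications from it, which removes the contradiction and the handwave. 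Two smaller divergences are worth noting. First, in the key lemma you track the moving points $a_k=g(\tau^{-n_k}(w))$ with $f^{n_k}(a_k)$ constant equal to $g(w)$, whereas the paper fixes the source point $g(y)$ with $y=\mu^{-1}(w)$ for $\mu=\lim\tau^{n_k}$; your version needs the uniform-convergence-plus-moving-point argument you flag, the paper's only needs pointwise convergence at a fixed point — both are fine. Second, in the forward direction you prove the containing class $\Sigma$ is elliptic via the estimate $\sigma_m(x)\le k_M(x,f^{-m}(x))$ and Proposition \ref{baghera}, rather than via the morphism $(M,\iota,f|_M)\to(Z,g,\tau)$; this is a pleasant shortcut (just note explicitly that $\sigma_1(x)<\infty$ makes $\Sigma$ invariant, so that Proposition \ref{baghera} applies). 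The extraction of a convergent subsequence from $(f^{n_k})$ is justified because Theorem \ref{bambi} keeps every orbit in a compact set, ruling out compactly divergent subsequences; you invoke this implicitly and it holds.
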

\begin{proof}
Let $f\colon X\to X$ be elliptic,  let $M\subset X$ be the limit manifold, and let $\iota\colon M\to X$ denote the inclusion mapping. Then $(M,\iota, f|_M )$ is an injective  pre-model for $f$. We show that it is elliptic. 
Let $z\in M$. Since $f$ is elliptic, by Theorem \ref{bambi} there exists a compact subset $K\subset X$ containing the subset $\{f^n(z)\}$. Since $M$ is a holomorphic retract of $X$, it is closed in $X$ and thus $K\cap M$ is compact. Thus by Theorem \ref{bambi} the  pre-model $(M,\iota, f|_M )$ is elliptic. 

We now show that the  pre-model $(M,\iota, f|_M )$ is canonical.
Assume by contradiction that $M$ is not a canonical submanifold. Then there exist a canonical submanifold $\Sigma\supsetneqq M$,   a canonical pre-model  $(Z,g,\tau)$ associated with $\Sigma$, and  a morphism
  $\hat\eta\colon (M,\iota, f|_M )\to (Z,g,\tau).$
The pre-model  $(Z,g,\tau)$ is elliptic: indeed, if $z\in M$ and if $K$ is a compact subset containing the subset $\{f^n(z)\}$, then
 $\tau^n(\eta (z))\in \eta (K)$ for all $n\geq 0$.
Let $x\in \Sigma$, $x\not\in M$, and let $w\in Z$ be such that $g(w)=x$. Since $\tau$ is an elliptic automorphism  there exists  a subsequence $(\tau^{n_k})$ converging uniformly on compact subsets to a holomorphic self-map $\mu\colon Z\to Z$. Since $Z$ is taut, by \cite[Proposition 2.1.24]{A} the self-map $\mu$ is an automorphism. Hence  $y\coloneqq \mu^{-1}(w)\in Z$ is such that $\tau^{n_k}(y)\to w$. This implies that 
$$f^{n_k}(g(y))=g(\tau^{n_k}(y))\to x.$$
Up to taking another subsequence we may assume that $(f^{n_k})$ converges uniformly on compact subsets to a mapping $h\colon X\to X$, such that $h(g(y))=x\not\in M$, which contradicts the assumption that $M$ is the limit manifold.

Conversely, let  $\Sigma\subset X$ be an elliptic invariant canonical submanifold. Let     $(Z,g,\tau)$ be  a canonical  pre-model associated with $\Sigma$, and let $z\in Z$. By Theorem \ref{bambi}, the subset  $\{\tau^m(z)\}$ is contained in a compact subset $K\subset Z$. Hence the subset $\{f^n(g(z))\}$ is contained in the compact subset $f(K)\subset X$, which implies that $(f^n)$ is not divergent on compact subsets. Thus $f$ is elliptic. 
Assume by contradiction that  the limit manifold  $M\subset X$ and $\Sigma$ do not coincide. By the previous part of the proof,  $M$ is also a canonical submanifold. Hence
$M$ and $\Sigma$ are disjoint, and arguing as before we contradict the assumption that $M$ is the limit manifold.
\end{proof}

\section{The unit ball}\label{mufasa}
In this section we prove Theorem  \ref{timon}. We first recall some definitions  and results about  the dynamics in $\B^q$.

\begin{definition}\label{jafar}

The Siegel upper half-space $\mathbb H^q$ is defined by $$\mathbb{H}^q=\left\{(z,w)\in \C\times \C^{q-1}, \Im(z)>\|w\|^2\right\}.$$ Recall that $\mathbb H^q$ is biholomorphic to the ball $\B^q$ via the {\sl Cayley transform} $\Psi\colon \B^q\to \H^q$ defined as $$\Psi(z,w)=\left(i\frac{1+z}{1-z},\frac{w}{1-z}\right), \quad (z,w)\in \C\times \C^{q-1}.$$

Let $\langle\cdot, \cdot\rangle$ denote the standard Hermitian product in $\C^q$. In several complex variables, the natural generalization of non-tangential limit at the boundary is the following.
 If $\zeta\in \partial\B^q$, then the set $$K(\zeta,R)\coloneqq\{z\in \B^q: |1-\langle z,\zeta\rangle|< R(1-\|z\|)\}$$ is a {\sl Kor\'anyi region} of {\sl vertex} $\zeta$ and {\sl amplitude} $R> 1$.
Let $f\colon \B^q \to \C^m$ be a holomorphic map. We say that $f$ has {\sl $K$-limit} $L\in \C^m$ at
$\zeta$ if for
each sequence $(z_k)\subset \B^q$ converging to $\zeta$ such that
$(z_k)$  belongs eventually to some Kor\'anyi region of vertex $\zeta$, we have
that $f(z_k)\to L$.

A sequence $(z_k)\subset \B^q$ converging to $\zeta\in \partial\B^q$ is said to be {\sl restricted} at $\zeta$ if   $\la z_k, \zeta\ra\to 1$ non-tangentially in $\D$, while  it is said to be {\sl special} at $\zeta$ if
\[
\lim_{k\to \infty}k_{\B^q}(z_k,\langle z_k,\zeta\rangle \zeta)=0.
\]
%This latter condition is equivalent to $\lim_{k\to \infty}\frac{\|z_k-\la z_k,\zeta\ra \zeta\|^2}{1-|\la z_k,\zeta\ra|^2}= 0$.

\begin{comment}
We say that $(z_k)$ is  {\sl admissible} at $\zeta$ if it is both special and restricted at $\zeta$.

We say that $f$ has {\sl admissible limit} (or  {\sl restricted $K$-limit})
$L$ at $\zeta$ if
for every admissible sequence $(z_k)\subset \B^q$ converging to $\zeta$ we have
that $f(z_k)\to L$.

We denote by $K\hbox{-}\lim$ the $K$-limit and by $\angle_K\lim$ the admissible limit.

\end{definition}

One can show that
\[
K\hbox{-}\lim_{z\to a}f(z)=L\Longrightarrow \angle_K\lim_{z\to
a}f(z)=L,
\]
but the converse implication  is not true in general.
\end{comment}
\end{definition}

\begin{definition}\label{magamago'}
A point $\zeta\in \partial \B^q$ such that $K\hbox{-}\lim_{z\to\zeta}f(z)=\zeta$ and
$$\liminf_{z\to\zeta}\frac{1-\|f(z)\|}{1-\|z\|}=\lambda<\infty$$ is called a {\sl boundary regular fixed point}, and $\lambda$ is called  its {\sl dilation}.
If  $\lambda>1$, then we call the point $\zeta$  a {\sl boundary repelling fixed point}.
\end{definition}

The following result from  \cite{H} generalizes the classical Denjoy--Wolff theorem in the unit disc.
\begin{theorem}
Let $f\colon \B^q\to \B^q$ be holomorphic. Assume that $f$ admits no fixed points in $\B^q$.
Then there exists a    boundary regular fixed point $p\in \de \B^q$ with dilation $\lambda\leq 1$, called the {\sl Denjoy--Wolff point} of $f$, such that $(f^n)$ converges uniformly on compact subsets to the constant map $z\mapsto p$. 
\end{theorem}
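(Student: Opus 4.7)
The plan is to adapt the classical Wolff--Denjoy argument from the disc to the ball, using invariant horospheres at a boundary point to force compact convergence of the iterates.

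\textbf{Step 1 (Compact divergence of $(f^n)$).} First I would show that the sequence $(f^n)$ is compactly divergent. Since $\B^q$ is bounded, it is taut, so $(f^n)$ is a normal family. If $(f^n)$ were not compactly divergent, then by Theorem \ref{bambi} every orbit $\{f^n(z_0)\}$ would be relatively compact in $\B^q$, and a subsequential limit $h\colon \B^q\to \B^q$ would exist. A Cartan-type argument based on the monotonically decreasing sequence $k_{\B^q}(f^n(z_0), f^{n+1}(z_0))$ together with tautness produces a limit retract $M\subset \B^q$ on which $f$ restricts to an automorphism; since every holomorphic retract of $\B^q$ is biholomorphic to a lower-dimensional ball (cf.\ Remark \ref{merlino}), a topological fixed-point argument applied to $f|_M$ produces a fixed point of $f$ in $\B^q$, contradicting the hypothesis.

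\textbf{Step 2 (Wolff's lemma).} Pass to a subsequence with $f^{n_k}(0)\to p\in\partial \B^q$. The distance-decreasing property of $f$ for $k_{\B^q}$, combined with the explicit ball formula
$$k_{\B^q}(z,w)=\tfrac{1}{2}\log\frac{1+\delta(z,w)}{1-\delta(z,w)},\qquad \delta(z,w)^2=1-\frac{(1-\|z\|^2)(1-\|w\|^2)}{|1-\langle z,w\rangle|^2},$$
applied to the pair $(z, f^{n_k}(0))$ and pushed to the limit $k\to\infty$, yields the Julia-type inequality
$$\frac{|1-\langle f(z),p\rangle|^2}{1-\|f(z)\|^2}\leq \frac{|1-\langle z,p\rangle|^2}{1-\|z\|^2}.$$
Equivalently, every horosphere $E(p,R):=\{z\in \B^q:|1-\langle z,p\rangle|^2/(1-\|z\|^2)<R\}$ is forward invariant under $f$.

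\textbf{Step 3 (Conclusion).} Since $\bigcap_{R>0}\overline{E(p,R)}=\{p\}$ and $(f^n)$ is compactly divergent by Step 1, tautness of $\B^q$ forces every subsequential limit of $(f^n)$ to be a constant map with value on $\partial \B^q$; the forward invariance of the horospheres then forces that constant to lie in $\bigcap_{R>0}\overline{E(p,R)}=\{p\}$. Hence $(f^n)$ converges uniformly on compact subsets to the constant $p$. The horosphere invariance also gives $\liminf_{z\to p}(1-\|f(z)\|)/(1-\|z\|)\leq 1$, so the dilation satisfies $\lambda\leq 1$; and the identity $K\hbox{-}\lim_{z\to p}f(z)=p$ follows because approach of $z$ to $p$ inside a Kor\'anyi region forces $z$ eventually into every $E(p,R)$, whose invariant image then concentrates at $p$.

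The main obstacle is the Julia-type inequality in Step 2. The passage from the disc, where a single Schwarz--Pick application suffices, to $\B^q$ requires a careful computation with the Bergman/Kobayashi formula, and the limit $k\to\infty$ must be taken with some care because $f^{n_k}(0)$ tends to the boundary so both numerator and denominator degenerate. A secondary subtlety is Step 1: excluding a relatively compact orbit without the existence of an interior fixed point relies on the special geometry of the ball, in particular the affine structure of its holomorphic retracts, to reduce to a lower-dimensional fixed-point problem.
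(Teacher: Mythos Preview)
The paper does not prove this statement: it is quoted as a known result of Herv\'e \cite{H}, so there is no in-paper argument to compare against. Your outline follows the standard Wolff--Denjoy strategy, but Step~2 contains a genuine gap. Applying Schwarz--Pick to the pair $(z,a_k)$ with $a_k=f^{n_k}(0)$ yields
\[
\frac{|1-\langle f(z),f(a_k)\rangle|^{2}}{1-\|f(z)\|^{2}}\ \le\ \frac{1-\|f(a_k)\|^{2}}{1-\|a_k\|^{2}}\cdot\frac{|1-\langle z,a_k\rangle|^{2}}{1-\|z\|^{2}},
\]
and passing to the limit requires two facts you have not supplied: that $f(a_k)=f^{n_k+1}(0)$ also converges to $p$ (so that the left-hand side tends to the horosphere function at $p$), and that the ratio $(1-\|f(a_k)\|^{2})/(1-\|a_k\|^{2})$ has $\liminf\le 1$ along the chosen subsequence. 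Both can be arranged---the first because $k_{\B^q}(a_k,f(a_k))\le k_{\B^q}(0,f(0))$ stays bounded while $a_k\to\partial\B^q$, the second by a telescoping-product argument---but neither is automatic, and without them your displayed Julia inequality does not follow. The route taken in \cite{H} and in \cite{A} sidesteps this entirely by working with the contractions $f_r(z)=rf(z)$ for $0<r<1$: each $f_r$ has an interior fixed point $w_r$ by Brouwer, Kobayashi balls about $w_r$ are genuinely $f_r$-invariant, and letting $r\to1$ delivers horosphere invariance at $p=\lim w_r$ with constant $1$ directly.

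A smaller point in Step~1: a bare Brouwer argument applied to $f|_M$ only produces a fixed point in $\overline{M}$, which could lie on $\partial M$. What forces it into the interior is that, by Theorem~\ref{bambi}, the automorphism $f|_M$ of $M\cong\B^{k}$ has relatively compact orbits, so $\{(f|_M)^n\}$ is relatively compact in $\Aut(M)$ and its closure is a compact group with a common fixed point in $M$.
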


A proof of the following result is given in \cite[Theorem 2.4.20]{A} for the more general case of bounded convex domains.
\begin{theorem}
A holomorphic self-map $f\colon\B^q\to\B^q$ is elliptic if and only if it admits a fixed point $z\in \B^q$.
\end{theorem}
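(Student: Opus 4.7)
The plan is to prove both directions using Theorem \ref{bambi} (elliptic is equivalent to some orbit being relatively compact) together with the Denjoy--Wolff theorem recalled just before the statement. Tautness of $\B^q$ will play a silent but essential role.

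For the easy direction, suppose $f$ admits a fixed point $z\in\B^q$. Then $f^n(z)=z$ for all $n\geq 0$, so the orbit $\{f^n(z)\}=\{z\}$ is trivially relatively compact in $\B^q$. Since $\B^q$ is taut (it is complete hyperbolic), Theorem \ref{bambi} applies and yields that $(f^n)$ is not compactly divergent. By Definition \ref{pinguino} this means $f$ is elliptic.

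For the converse, assume $f$ is elliptic. By Definition \ref{pinguino} the sequence $(f^n)$ is not compactly divergent, so by Theorem \ref{bambi} there exists $y\in\B^q$ such that $\{f^n(y)\}$ is relatively compact in $\B^q$. Pick a subsequence $f^{n_k}(y)\to p$ with $p\in\B^q$. Now suppose, for the sake of contradiction, that $f$ has no fixed point in $\B^q$. Then the Denjoy--Wolff theorem recalled above provides a boundary regular fixed point $q\in\partial\B^q$ such that $f^n\to q$ uniformly on compact subsets of $\B^q$. In particular $f^{n_k}(y)\to q\in\partial\B^q$, which contradicts $f^{n_k}(y)\to p\in\B^q$ since $\B^q$ and $\partial\B^q$ are disjoint. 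Hence $f$ must have a fixed point in $\B^q$.

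There is no real obstacle here: both directions reduce to invoking, in sequence, the characterization of non-compact-divergence via relatively compact orbits and the dichotomy provided by the Denjoy--Wolff theorem. The only point that requires mild care is ensuring that the limit point $p$ obtained from the relatively compact orbit actually lies in the open ball $\B^q$ and not on its boundary; this is automatic because relative compactness in $\B^q$ (as opposed to in the closed ball $\overline{\B^q}$) is exactly what Theorem \ref{bambi} provides, and it is precisely this distinction that forces the contradiction with the boundary-valued Denjoy--Wolff limit.
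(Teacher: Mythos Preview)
Your proof is correct. Note, however, that the paper does not actually give its own proof of this statement: it simply cites \cite[Theorem 2.4.20]{A}, where the result is proved in the more general setting of bounded convex domains. Your argument is a valid self-contained proof assembled from results already recalled in the paper (Theorem \ref{bambi} and the Denjoy--Wolff theorem for $\B^q$), so there is nothing to compare against beyond observing that you have supplied a proof where the paper only supplies a reference.
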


\begin{remark}\label{peterpan}
Let $f\colon\B^q\to\B^q$ be a holomorphic self-map  without fixed points, and let $\lambda$ be the dilation at its Denjoy--Wolff fixed point. Then by \cite[Proposition 5.8]{AB} the divergence rate of $f$ satisfyies $$c(f)=-\log \lambda.$$ Thus  Definition \ref{pinguino} generalizes the classical definition of elliptic,  parabolic and hyperbolic self-maps in the unit ball (see, e.g., \cite[Definition 5.3]{AB}).
\end{remark}

\begin{comment}

\begin{definition}
Let $f\colon \B^q\to \B^q$ be an elliptic holomorphic self-map. If the complex dimension $k$ of the limit manifold is strictly greater than zero, then $f$ is called {\sl rotationally elliptic}. If  $k=0$, then $f$ is called {\sl strongly elliptic} and has a unique fixed point $p\in \B^q$  called the {\sl Denjoy--Wolff point} of $f$. 
 
\end{definition}

For a proof of the following result see for example \cite[Proposition 1.6]{AR} where it is proved for the more general case of bounded strictly convex domains.
\begin{proposition}
Let $f\colon \B^q\to\B^q$ be a holomorphic self-map with no fixed points in $\B^q$. Then a boundary regular fixed point $\zeta\in \partial \B^q$ has dilation $\lambda\leq 1$ if and only if $\zeta$ is the Denjoy--Wolff  point of $f$.
\end{proposition}

\end{comment}

\begin{definition}
Let $f\colon \B^q\to \B^q$ be a holomorphic self-map. Let $\zeta\in \partial \B^q$ be a boundary regular fixed point. The {\sl stable subset} of $f$ at $\zeta$  is defined as 
 the subset consisting of all $z\in \B^q$ such that there exists a backward orbit with bounded step starting at $z$ and  converging to $\zeta$. We denote it by $\mathcal S(\zeta)$.
\end{definition}
Clearly  $\mathcal S(\zeta)$ coincides with the union of all backward orbits in $\B^q$ with bounded  step converging to $\zeta$.

We can now give an answer to Question \ref{robinhoodintro}.
\begin{proposition}
Let  $f\colon \B^q\to\B^q$ be univalent, and let $\zeta\in\de\B^q$ be a boundary regular fixed point. Then  $\mathcal{S}(\zeta)$, if non-empty, is the disjoint union of invariant canonical submanifolds, which are injectively immersed  holomorphic balls $\B^k$ with $1\leq k\leq q$.
\end{proposition}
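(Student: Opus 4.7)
The plan is to reduce the proposition to Theorem \ref{mainintro} combined with two facts specific to $\B^q$: the classification of its holomorphic retracts, and a horospheric estimate controlling boundary behaviour.

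First, I would apply Theorem \ref{mainintro} to the univalent map $f$, giving the decomposition $\mathcal{S}(f)=\bigsqcup_{j\in J}\Sigma_j$ into invariant canonical submanifolds, with each $\Sigma_j=g_j(Z_j)$ the injectively immersed image of a holomorphic retract $Z_j$ of $\B^q$. Since holomorphic retracts of $\B^q$ are affine subsets (Remark \ref{merlino}), each $Z_j$ is biholomorphic to a ball $\B^{k_j}$ with $0\leq k_j\leq q$. Because $\mathcal{S}(\zeta)\subset\mathcal{S}(f)$, this immediately writes $\mathcal{S}(\zeta)$ as $\bigsqcup_{j\in J}(\Sigma_j\cap\mathcal{S}(\zeta))$; what remains is the dichotomy $\Sigma_j\cap\mathcal{S}(\zeta)\in\{\varnothing,\Sigma_j\}$ together with the positivity of $k_j$ in the nontrivial case.

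The key step is the dichotomy. Given $x\in\Sigma_j\cap\mathcal{S}(\zeta)$ and any $y\in\Sigma_j$, the equivalence $x\sim y$ of Definition \ref{catwoman} says that the sequence $(k_{\B^q}(f^{-n}(x),f^{-n}(y)))_n$ is bounded, by some $M>0$. Here I would invoke the fact from the ball dynamics literature (Poggi-Corradini in one variable, Ostapyuk \cite{O} and Abate--Raissy \cite{AR} in several) that a backward orbit with bounded step converges to its boundary limit through a Kor\'anyi region. Setting $H_\zeta(z)\coloneqq|1-\langle z,\zeta\rangle|^2/(1-\|z\|^2)$, this restricted convergence gives $H_\zeta(f^{-n}(x))\to 0$, while the standard Busemann-function inequality $H_\zeta(w)\leq e^{2k_{\B^q}(z,w)}H_\zeta(z)$ then yields $H_\zeta(f^{-n}(y))\to 0$; an elementary estimate (namely $|1-\langle b,\zeta\rangle|\leq \sqrt{2 H_\zeta(b)}$ forces $\langle b,\zeta\rangle\to 1$, hence $b\to\zeta$) concludes that $f^{-n}(y)\to\zeta$, whence $y\in\mathcal{S}(\zeta)$.

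Finally, to rule out $k_j=0$ when $\Sigma_j\subset\mathcal{S}(\zeta)$, note that a zero-dimensional invariant canonical submanifold is a single point $\{x\}$ with $f(x)=x$ by invariance and univalence of $f$; then the only backward orbit starting at $x$ is the constant sequence, which cannot converge to $\zeta\in\partial\B^q$. The main obstacle is the restricted convergence of backward orbits with bounded step: this is the nontrivial ingredient imported from the cited literature on backward dynamics in the ball, whereas the horospheric inequality and the rest of the argument are essentially formal.
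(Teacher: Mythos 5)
Your overall architecture matches the paper's: reduce to Theorem \ref{mainintro}, identify the retracts as balls via Remark \ref{merlino}, and prove the dichotomy that an invariant canonical submanifold meeting $\mathcal{S}(\zeta)$ lies entirely inside it. Your explicit exclusion of $k_j=0$ (a zero-dimensional invariant piece is a fixed point of $f$, whose constant backward orbit cannot tend to $\partial\B^q$) is a point the paper leaves implicit, and it is correct.

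The gap is in your proof of the dichotomy. You import the claim that \emph{every} backward orbit with bounded step converges to its boundary limit through a Kor\'anyi region, and you need exactly this to get $H_\zeta(f^{-n}(x))\to 0$ before applying the horosphere inequality. That claim is false in the generality the proposition requires: $\zeta$ is an arbitrary boundary \emph{regular} fixed point, which includes the Denjoy--Wolff point of a parabolic self-map, and Poggi-Corradini \cite{PC2} shows that backward orbits with bounded step tending to a parabolic Denjoy--Wolff point converge \emph{tangentially}. Along such a sequence $H_\zeta$ need not tend to $0$ (approach along a fixed horocycle keeps it bounded away from $0$), so the transfer $H_\zeta(f^{-n}(y))\le e^{2M}H_\zeta(f^{-n}(x))$ gives nothing. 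Even for repelling $\zeta$, the restricted convergence results in \cite{O,AR} concern specially constructed orbits rather than arbitrary backward orbits with bounded step. The repair --- which is what the paper actually does --- bypasses all dynamical input: in $\B^q$ the Euclidean diameter of a Kobayashi ball of fixed radius $M$ tends to $0$ as its center tends to $\partial\B^q$ (see, e.g., \cite{A}), so $k_{\B^q}(f^{-n}(x),f^{-n}(y))\le M$ together with $f^{-n}(x)\to\zeta$ forces $\|f^{-n}(x)-f^{-n}(y)\|\to 0$, hence $f^{-n}(y)\to\zeta$, with no control on the mode of convergence required. With that substitution in the key step, your argument is complete.
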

\begin{proof}
Clearly $\mathcal{S}(\zeta)\subset \mathcal{S}(f)$. 
Assume that $\mathcal{S}(\zeta)$ intersects an invariant  canonical submanifold $\Sigma$. Thus there exists $z\in \Sigma$ such that $(f^{-n}(z))$ converges to $\zeta$. If $w\in \Sigma$, then the sequence $(k_{\B^q}(f^{-n}(z),f^{-n}(w)))$ is bounded, which means that $(f^{-n}(w))$ converges to the same point $\zeta$. Hence $\Sigma\subset \mathcal{S}(\zeta)$.
The result follows by Remark \ref{merlino}.
\end{proof}

In order to give a more precise answer to Question \ref{robinhoodintro} in the univalent case, one should answer the two following open questions.
\begin{question}
Let  $f\colon \B^q\to\B^q$ be univalent, and let 
 $\zeta\in\de\B^q$ be a boundary repelling fixed point with dilation $1<\lambda<\infty$. By \cite[Lemma 3.1]{O},  if $\zeta$ is isolated
  from other boundary repelling fixed points with dilation less or equal than $\lambda$, then $\mathcal{S}(\zeta)\neq \varnothing$. Is the same true if the point $\zeta$ is not isolated?

\end{question}
\begin{question}\label{ariel}
Let  $f\colon \B^q\to\B^q$ be univalent, and let 
 $\zeta\in\de\B^q$ be a boundary repelling fixed point.  Can  $\mathcal{S}(\zeta)$ contain two different invariant canonical submanifolds?
In other words, may there exist two backward orbits  $(f^{-n}(z)),(f^{-n}(w))$ with bounded step, converging to $\zeta$, and such that
$$k_{\B^q}(f^{-n}(z),f^{-n}(w))\to\infty?$$

\end{question}

\begin{remark}
If $\zeta\in\de\B^q$ is a boundary regular fixed point with dilation $\lambda\leq 1$, then $\mathcal{S}(\zeta)$ may  contain two different invariant canonical submanifolds. For example, consider the domain of $\C$ defined by $A\coloneqq \C\smallsetminus {\R^-}$. Consider the holomorphic univalent self-map $f\colon A\to A$ defined by $f(z)=z+1$. Then we have $\Lambda= \C\smallsetminus \R$, and the upper and lower half-planes are canonical invariant submanifolds contained in the stable subset of the Denjoy--Wolff point of $f$.
\end{remark}

Let  $f\colon \B^q\to\B^q$ be univalent, and let $\zeta\in\de\B^q$ be a boundary regular fixed point. 
Let  $\Sigma$ be an   invariant canonical submanifold in the stable subset $\mathcal{S}(\zeta)$. What can be said about the type of  $\Sigma$? If the point $\zeta$ is repelling, we have the following answer.

\begin{proposition}\label{biancaebernie}
Let  $f\colon \B^q\to\B^q$ be univalent, and let $\zeta\in\de\B^q$ be a boundary repelling fixed point, with dilation $1<\lambda<\infty$.
Let  $\Sigma$ be a  canonical submanifold in the stable subset $\mathcal{S}(\zeta)$,  let $1\leq k\leq q$ be the dimension of $\Sigma$,
 let $x\in\Sigma$ and  $$\mu\coloneqq \lim_{m\to \infty} e^{\frac{\sigma_m(x)}{m}}\geq \lambda.$$
Then $\Sigma$ is hyperbolic, and
 there exists  a $(k-1)\times(k-1)$ diagonal unitary matrix $U$,  and 
  an injective holomorphic immersion  $g\colon \H^k\to \B^q$ with $g(\H^k)=\Sigma$ and $$K\hbox{-}\lim_{z\to \infty}g(z)=\zeta,$$
 such that
 $$\left(\H^k,g,\tau\colon(z,w)\mapsto \left(\frac{1}{\mu} z,\frac{1}{\sqrt \mu}\, Uw\right)\right)$$ is an injective pre-model for $f$.

\end{proposition}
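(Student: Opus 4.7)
The plan is to specialize Theorem \ref{mainintro} to the case $X=\B^q$ and then classify the resulting canonical pre-model. By Theorem \ref{mainintro} applied to the invariant canonical submanifold $\Sigma\subset \mathcal{S}(\zeta)\subset \mathcal{S}(f)$, there exists a canonical pre-model $(Z,g_0,\tau_0)$ with $Z$ a holomorphic retract of $\B^q$, $g_0(Z)=\Sigma$, and, by Corollary \ref{poi}, $c(\tau_0)=\lim_{m\to\infty}\sigma_m(x)/m=\log\mu$. By Remark \ref{merlino}, $Z$ is biholomorphic to $\B^k$, equivalently to $\H^k$.

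I would first establish $\mu\geq\lambda$ (and hence hyperbolicity). For each $m\geq 1$, the point $\zeta$ remains a boundary repelling fixed point of the univalent map $f^m$ with dilation $\lambda^m$, and $(f^{-mn}(x))_{n\geq 0}$ is a backward orbit for $f^m$ converging to $\zeta$ whose backward $1$-step equals $\sigma_m(x)$. Invoking the classical lower bound for the backward $1$-step at a boundary repelling fixed point (see \cite[Lemma 3.1]{O}) gives $\sigma_m(x)\geq \log(\lambda^m)=m\log\lambda$, so $\log\mu\geq \log\lambda$. Since $\lambda>1$ this forces $c(\tau_0)>0$, and by Proposition \ref{baghera}, $\Sigma$ is hyperbolic.

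Next, I would put $\tau_0$ in normal form. Being a hyperbolic automorphism of $Z\cong \H^k$ of divergence rate $\log\mu$, $\tau_0$ has two distinct boundary fixed points: the Denjoy--Wolff point of $\tau_0^{-1}$ (which corresponds under $g_0$ to $\zeta$, since the backward orbits in $\Sigma$ converge to $\zeta$) and the Denjoy--Wolff point of $\tau_0$. Choose a biholomorphism $Z\to \H^k$ sending the former to $\infty$ and the latter to $0$. The stabilizer of $\{0,\infty\}$ in $\Aut(\H^k)$ consists of maps $(z,w)\mapsto(rz,\sqrt{r}\,Vw)$ with $r>0$ and $V$ a $(k-1)\times(k-1)$ unitary matrix. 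Matching the divergence rates forces $r=1/\mu$, and a further conjugation by $(z,w)\mapsto(z,Ww)$ with $W$ unitary diagonalizing $V$ produces a diagonal unitary $U$. Modifying $g_0$ by these conjugations yields an injective holomorphic immersion $g\colon\H^k\to\B^q$ with $g(\H^k)=\Sigma$, and the pre-model $(\H^k,g,\tau)$ with $\tau(z,w)=(\tfrac{1}{\mu}z,\tfrac{1}{\sqrt{\mu}}Uw)$ is the one sought.

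The remaining step is to verify $K\hbox{-}\lim_{z\to\infty}g(z)=\zeta$, which I expect to be the main obstacle. I would fix any point $p\in\H^k$, say $p=(i,0)$, and observe that the iterates $\tau^{-n}(p)=(i\mu^n,0)$ tend to $\infty$ along the pure imaginary $z$-axis, so in particular they are special and restricted at $\infty$, while their images $g(\tau^{-n}(p))=f^{-n}(g(p))$ form a backward orbit in $\Sigma\subset\mathcal{S}(\zeta)$ and hence converge to $\zeta$. The full $K$-limit statement then follows from a Lindel\"of-type principle for bounded holomorphic maps from $\H^k$ to $\B^q$: the existence of a single special restricted sequence tending to $\infty$ whose image tends to a boundary point of $\B^q$ forces the $K$-limit at $\infty$ to exist and equal that boundary point.
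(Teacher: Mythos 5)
Your first two steps follow the paper's proof almost verbatim: the paper also takes a canonical pre-model $(Q,t,\vartheta)$ for $\Sigma$, uses Remark \ref{merlino} to identify $Q$ with $\B^k$, Corollary \ref{poi} to get $c(\vartheta)=\log\mu$, and then conjugates the resulting hyperbolic automorphism into the normal form $(z,w)\mapsto(\tfrac1\mu z,\tfrac1{\sqrt\mu}Uw)$ on $\H^k$. The only cosmetic difference is in how $\mu\geq\lambda$ is obtained: the paper derives $m\log\lambda\leq\sigma_m(x)$ directly from the $\liminf$ characterization of the dilation of $f^m$ together with the triangle inequality $k_{\B^q}(w,z)-k_{\B^q}(w,f^m(z))\leq k_{\B^q}(z,f^m(z))$, rather than quoting a lemma of Ostapyuk (and note that \cite[Lemma 3.1]{O}, as used elsewhere in this paper, is an existence statement for backward orbits at isolated repelling points, not the step lower bound you want — so your citation is at best imprecise).

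The genuine gap is in the last step. The principle you invoke — that a \emph{single} special and restricted sequence tending to $\infty$ whose image converges to a boundary point forces the full $K$-limit — is not a correct general statement. The classical Lindel\"of principle of \v{C}irka--Abate for bounded holomorphic maps yields only the \emph{restricted} $K$-limit (convergence along special restricted curves), which is strictly weaker than the $K$-limit along Kor\'anyi regions asserted in the proposition; moreover it requires a curve (or at least a bounded-step sequence that one interpolates), not an arbitrary sequence. The conclusion $K\hbox{-}\lim_{z\to\infty}g(z)=\zeta$ therefore does not follow from any off-the-shelf Lindel\"of theorem. The paper closes this step by observing exactly what you observe — that a backward orbit $(z_n,w_n)$ of $\tau$ tends to $\infty$ with uniformly bounded step and uniformly bounded Kobayashi distance from the axis $\{w=0\}$, and that its image under $g$ is a backward orbit of $f$ converging to $\zeta$ — and then appeals to \cite[Theorem 5.6]{AB}, a Julia-type result tailored to this situation which upgrades such data to the full $K$-limit. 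Your sequence $\tau^{-n}(i,0)=(i\mu^n,0)$ does satisfy the hypotheses of that theorem, so the argument can be completed, but as written the final implication rests on a false (or at least unproved and substantially stronger than standard) principle and needs to be replaced by an actual Julia-lemma/horosphere argument or the citation above.
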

\begin{proof}
Let $n\geq 0$.
Since $\lambda^n$ is the dilation at $\zeta$ of the mapping $f^n$, we have, for any $w\in \B^q$ (see, e.g., \cite{A}),
$$n\log \lambda=\liminf_{z\to \zeta}(k_{\B^q}(w,z)-k_{\B^q}(w,f(z))).$$
Since $$k_{\B^q}(w,z)-k_{\B^q}(w,f(z))\leq k_{\B^q}(z,f(z)),$$ we have that 
 $n\log\lambda\leq {\sigma_n(x)}$, that is,
$\lambda\leq e^{\frac{\sigma_n(x)}{n}}$. Thus $\mu\geq\lambda$.
Let $(Q, t,\vartheta)$ be a canonical pre-model associated with $\Sigma$. By Remark \ref{merlino}, $Q$ is biholomorphic to $\B^k$, where $1\leq k\leq q$.   By Corollary \ref{poi}, the divergence rate of the automorphism $\vartheta$  satisfies $c(\vartheta)=\log\mu$. Hence $\vartheta$ is hyperbolic and  by Remark \ref{peterpan} the dilation at its Denjoy--Wolff point is equal to $e^{-c(\vartheta)}=\frac{1}{\mu}$, and thus there exists (see e.g. \cite{A}) a biholomorphism $h\colon Q\to \H^k$ such that $$\tau\coloneqq h\circ  \vartheta\circ h^{-1} (z,w)= \left(\frac{1}{\mu} z,\frac{1}{\sqrt \mu} \,Uw\right),$$ where $U$ is a $(k-1)\times(k-1)$ diagonal unitary matrix. Set $g\coloneqq t\circ h^{-1}$. Then $(\H^k,g,\tau)$ is an injective  pre-model isomorphic to $(Q, t,\vartheta)$.

We now address the regularity at $\infty$ of the intertwining mapping $g$. Let $(z_n,w_n)$ be a backward orbit in $\H^k$ for $\tau$. Then  $(z_n,w_n)$ converges to $\infty$ and there exists $C>0$ such that  
$$k_{\H^k}((z_n,w_n),( z_{n+1},w_{n+1}))\leq C,\quad \mbox{and}\quad k_{\H^k}((z_n,w_n),(z_n, 0))\leq C.$$ 
 Clearly $g(z_n,w_n)$ is a backward orbit for $f$ which converges to $\zeta\in\partial\B^q$. Then  \cite[Theorem 5.6]{AB}  yields the result.
\end{proof}

It is natural to ask whether, using the notations of the previous proposition, some condition implying  $\mu=\lambda$ can be given.
For example, if there exists $x\in \Sigma$ such that $\sigma_1(x)=\log\lambda$, then $\mu=\lambda$. This follows immediately from $\lambda\leq \mu= \inf_{m\in\N} e^{\frac{\sigma_m(x)}{m}}$. Notice that in this case, by \cite[Lemma 3.7]{O}, the backward orbit $(f^{-n}(x))$ is special and $\langle f^{-n}(x),\zeta \rangle\to 1$ asymptotically radially in $\D$, thus in particular  $(f^{-n}(x))$ is restricted. 
The next result shows that the special and restricted convergence of  $(f^{-n}(x))$ is in fact enough to obtain $\lambda=\mu$. 

\begin{comment}
For the convenience of the reader, we recall the following result proved in \cite[Proposition 5.4]{AB} for the Denjoy--Wolff point (the same proof works for boundary regular fixed points).
\begin{proposition}\label{BRFP}
Let $f\colon \B^q\to\B^q$ be holomorphic.
Let $\zeta\in\partial \B^q$ be a   boundary regular fixed point point  with  dilation $0<\lambda<\infty$. Then for any special and restricted sequence $(z_k)$    which converges to $\zeta$ such that $\lim_{k\to \infty}\frac{1-\langle z_k,e_1\rangle}{|1-\langle z_k,e_1\rangle|}=c$, it follows
\begin{equation}\label{special-formula}
\lim_{k\to \infty} k_{\B^q}(z_k, f(z_k))=\log \frac{|1+c^2\lambda|+(1-\lambda)}{|1+c^2\lambda|-(1-\lambda)}.
\end{equation}
\end{proposition}
\end{comment}

\begin{proposition}
Let $f\colon \B^q\to\B^q$ be a univalent self-map. 
Let $\zeta\in\de\B^q$ be a boundary repelling fixed point, with dilation $1<\lambda<\infty$. Let $\Sigma$ be an invariant canonical submanifold contained in $\mathcal{S}(\zeta)$, and assume there exists $x\in \Sigma$ such that the backward orbit $(f^{-n}(x))$ is special and restricted. Then  $\mu=\lambda$.
\end{proposition}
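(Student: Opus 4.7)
The plan is to compute the asymptotic behavior of $\sigma_m(x)$ as $m\to\infty$, show that $\sigma_m(x)/m\to\log\lambda$, and then conclude $\mu=\lambda$ by Proposition \ref{biancaebernie} (which already gives $\mu\geq\lambda$ and which expresses $\mu$ as $\lim_m e^{\sigma_m(x)/m}$).

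Write $z_n\coloneqq f^{-n}(x)$, so that $f^m(z_{n+m})=z_n$ and $\sigma_m(x)=\lim_n k_{\B^q}(z_n,z_{n+m})$ (monotone limit). Decompose $z_n=(1-u_n)\zeta+v_n$ with $v_n\perp\zeta$; then the hypotheses translate, via the usual Koranyi-type calculation, into $\|v_n\|^2=o(\Re u_n)$ (special) and $|u_n|\leq C\,\Re u_n$ for some fixed $C>0$ (restricted). Since $(z_n)$ is admissible, the Julia--Wolff--Carath\'eodory theorem at $\zeta$ gives the complex ratio $u_n/u_{n+1}\to\lambda$, and iterating, $u_n/u_{n+m}\to\lambda^m$ for every $m\geq 1$; in particular $\arg u_n-\arg u_{n+1}\to 0$, and passing to a subsequence (harmless, since $\sigma_m(x)$ exists as a genuine limit) we may assume $\arg u_n\to\theta$ with $|\theta|<\pi/2$.

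Next I substitute these asymptotics into the pseudohyperbolic identity
\[
E(z_n,z_{n+m})\coloneqq 1-\frac{(1-\|z_n\|^2)(1-\|z_{n+m}\|^2)}{|1-\langle z_n,z_{n+m}\rangle|^2}.
\]
Using $1-\|z_n\|^2=2\Re u_n-|u_n|^2-\|v_n\|^2=2\Re u_n(1+o(1))$ and $1-\langle z_n,z_{n+m}\rangle=u_n+\bar u_{n+m}-u_n\bar u_{n+m}-\langle v_n,v_{n+m}\rangle=u_n+\bar u_{n+m}+o(|u_{n+m}|)$, a direct computation yields in the limit $n\to\infty$
\[
E(z_n,z_{n+m})\;\longrightarrow\;\frac{(\lambda^m-1)^2}{(\lambda^m+1)^2-4\lambda^m\sin^2\theta}.
\]
Recalling that in the paper's convention $k_{\B^q}(z,w)=\log\frac{1+\sqrt{E(z,w)}}{1-\sqrt{E(z,w)}}$, setting $D\coloneqq\sqrt{(\lambda^m+1)^2-4\lambda^m\sin^2\theta}$ and using the identity $D^2-(\lambda^m-1)^2=4\lambda^m\cos^2\theta$, one rearranges
\[
\sigma_m(x)=2\log(D+\lambda^m-1)-\log 4-m\log\lambda-2\log|\cos\theta|.
\]
Since $D=\lambda^m+\cos(2\theta)+O(\lambda^{-m})$, we obtain $\sigma_m(x)=m\log\lambda-2\log|\cos\theta|+O(\lambda^{-m})$ as $m\to\infty$.

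By restrictedness $|\cos\theta|$ is bounded away from $0$, so $\sigma_m(x)=m\log\lambda+O(1)$, hence $\sigma_m(x)/m\to\log\lambda$ and $\mu=\lim_m e^{\sigma_m(x)/m}=\lambda$. The main obstacle is the asymptotic expansion of $E(z_n,z_{n+m})$: one needs the special condition to discard the $\|v_n\|^2$ and $\langle v_n,v_{n+m}\rangle$ contributions, restrictedness to keep $\arg u_n$ away from $\pm\pi/2$ (so that $\cos\theta\neq 0$), and the Julia--Wolff--Carath\'eodory relation between $u_n$ and $u_{n+m}$ (both in modulus and argument) to identify the limit explicitly; once these are in place the remaining manipulation is elementary trigonometry.
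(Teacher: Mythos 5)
Your proof is correct and follows essentially the same route as the paper: both arguments establish that along a special and restricted backward orbit $\sigma_m(x)=\log\frac{D+\lambda^m-1}{D-\lambda^m+1}$ with $D=|e^{-2i\theta}+\lambda^m|$, whence $\sigma_m(x)=m\log\lambda+O(1)$ and $\mu=\lambda$. The only difference is that the paper obtains this formula by citing \cite[Proposition 5.4]{AB} (noting its proof extends to boundary regular fixed points), whereas you rederive it directly from the Julia--Wolff--Carath\'eodory relation $u_n/u_{n+m}\to\lambda^m$ and the explicit expression for $k_{\B^q}$, which makes the argument self-contained but is not a genuinely different strategy.
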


\begin{proof}
Let $(z_{n_k})$ be a subsequence such that the limit $$\lim_{k\to \infty}\frac{1-\langle z_{n_k}, e_1\rangle }{|1-\langle z_{n_k}, e_1\rangle|}=e^{i\vartheta},$$ with $\vartheta\in (-\pi/2,\pi/2)$. Then from   \cite[Proposition 5.4]{AB} (whose proof can be applied to the case of boundary regular fixed points) it follows that 
$$\sigma_1(x)=\lim_{k\to\infty}k_{\B^q}(z_{n_k}, f(z_{n_k}))=\log\frac{|e^{-2i\vartheta}+\lambda|+|1-\lambda|}{|e^{-2i\vartheta}+\lambda|-|1-\lambda|}.$$
Similarly, for all $m\geq 0$,
$$\sigma_m(x)=\lim_{k\to\infty}k_{\B^q}(z_{n_k}, f^m(z_{n_k}))=\log\frac{|e^{-2i\vartheta}+\lambda^m|+|1-\lambda^m|}{|e^{-2i\vartheta}+\lambda^m|-|1-\lambda^m|}.$$
Thus $$\lim_{m\to\infty} \frac{\sigma_m(x)}{m}=\lim_{m\to\infty}\log\frac{\sqrt[m]{{|e^{-2i\vartheta}+\lambda^m|+|1-\lambda^m|}}}{\sqrt[m]{|{e^{-2i\vartheta}+\lambda^m|-|1-\lambda^m|}}}.$$
We have $$\sqrt[m]{{|e^{-2i\vartheta}+\lambda^m|+|1-\lambda^m|}}\to\lambda,$$ and since 
$$\Re (e^{-2i\vartheta}+1)|\leq |e^{-2i\vartheta}+\lambda^m|-|1-\lambda^m|\leq |e^{-2i\vartheta}+1|,$$
the result follows.

\end{proof}

The proof of Theorem \ref{timon} follows easily from the results of this section.

The following univalent self-map of the Siegel upper half-space was first studied in \cite[Example 6.3]{O} as an example of a self-map admitting a real one-dimensional curve of boundary repelling fixed points with the same dilation. We study its stable subset.
\begin{example}
Let $f\colon \H^2\to \H^2$ be defined by $f(z,w)=(2z+iw^2,w).$ Then $f$ is univalent and hyperbolic, with Denjoy--Wolff point at infinity. We have
$$f^n(z,w)=(2^nz+(2^n-1)iw,w).$$
Hence $(z,w)\in \H^2$ is in the invariant subset $\Lambda$ if and only if $-\Im iw^2\geq|w|^2$, that is, if and only if $w$ is  pure imaginary.
Then $\Lambda=\H^2\cap\{\Re w=0\}$, and it is therefore a real three-dimensional submanifold of $\H^2$. The stable manifold $\mathcal{S}(f)$ coincides with $\Lambda$ since every backward orbit has bounded step.
Let $r\in \R$  and define $\Sigma_{r} \coloneqq\H^2\cap\{w=ir\} $. Then $\Sigma_r$ is a canonical invariant submanifold.
The point $(ir^2,ir)\in \partial \H^2$ is a boundary repelling fixed point with dilation $2$ and  $\Sigma_r$ is the stable subset of $f$ at $(ir^2,ir)$. 

We  want now to describe a canonical pre-model associated with $\Sigma_r$.
The automorphism of $\H^2$ given by $$h_r(z,w)=(z+ir^2-2rw,w-ir)$$ maps the boundary repelling point $(ir^2,ir)$ to the origin and $\Sigma_r$  to $\Sigma_0$.
On the other hand, $$h_r\circ f\circ h_r^{-1}=f.$$ A canonical pre-model  associated with $\Sigma_0$ is given by $(\H, g,\tau\colon z\mapsto 2z)$,  where   $g(z)=(z,0)$. Hence a canonical pre-model  associated with $\Sigma_r$ is given by
$$(\H, h_r^{-1}\circ g,\tau\colon z\mapsto 2z).$$

\end{example}
\bibliographystyle{amsplain}

\end{document}